\def\N{I\!\!N}
\def\R{I\!\!R}
\def\oR{I\!\!\overline{R}}
\def\mN{\mathcal{N}}
\def\mC{\mathcal{C}}
\def\mE{\mathcal{E}}
\def\mP{\mathcal{P}}
\def\mJ{\mathcal{J}}
\def\mU{\mathcal{U}}
\def\be{\bar{e}}\def\we{\widetilde{e}}
\def\ou{\bar{u}}\def\wu{\widetilde{u}}\def\hu{\widehat{u}}
\def\ov{\bar{v}}
\def\oB{\bar{B}}
\def\cl{{\rm cl}}
\def\gph{{\rm gph\,}}
\def\epi{{\rm epi\,}}
\def\dom{{\rm dom\,}}
\def\cone{{\rm cone}}
\def\argmin{{\rm argmin}}
\def\st{|\;}\def\bst{\big|\;}\def\Bst{\Big|\;}
\def\Limsup{\mathop{{\rm Lim\hspace{0.3mm}sup}}}
\def\liminf{\mathop{{\rm lim\hspace{0.3mm}inf}}}
\begin{document}

\newtheorem{Theorem}{Theorem}[section]
\newtheorem{Proposition}[Theorem]{Proposition}
\newtheorem{Remark}[Theorem]{Remark}
\newtheorem{Lemma}[Theorem]{Lemma}
\newtheorem{Corollary}[Theorem]{Corollary}
\newtheorem{Definition}[Theorem]{Definition}
\newtheorem{Example}[Theorem]{Example}
\newtheorem{CounterExample}[Theorem]{Counter-Example}
\renewcommand{\theequation}{\thesection.\arabic{equation}}
\normalsize

\title{Full Stability for a Class of Control Problems of Semilinear
       Elliptic Partial Differential Equations}

\author{Nguyen Thanh Qui\footnote{College of Information and Communication Technology,
        Can Tho University, Campus II, 3/2 Street, Can Tho, Vietnam;
        ntqui@cit.ctu.edu.vn. Current address (01.07.2016-30.06.2018): Institut f\"{u}r Mathematik,
        Universit\"{a}t W\"{u}rzburg, Emil-Fischer-Str.~30, 97074 W\"{u}rzburg, Germany;
        thanhqui.nguyen@mathematik.uni-wuerzburg.de. The research of this author is
       supported by the Alexander von Humboldt-Foundation.}\quad and\quad
        Daniel Wachsmuth\footnote{Institut f\"{u}r Mathematik, Universit\"{a}t W\"{u}rzburg,
        Emil-Fischer-Str.~30, 97074 W\"{u}rzburg, Germany;
        daniel.wachsmuth@mathematik.uni-wuerzburg.de.}}

\maketitle
\date{}

\noindent {\bf Abstract.} We investigate full Lipschitzian and full
H\"{o}lderian stability for a class of control problems governed by
semilinear elliptic partial differential equations, where all the
cost functional, the state equation, and the admissible control set
of the control problems undergo perturbations. We establish explicit
characterizations of both Lipschitzian and H\"{o}lderian full
stability for the class of control problems. We show that for this
class of control problems the two full stability properties are
equivalent. In particular, the two properties are always equivalent
in general when the admissible control set is an arbitrary fixed
nonempty, closed, and convex set.

\medskip
\noindent {\bf Key words.} Perturbed control problem, semilinear
elliptic partial differential equations, full Lipschitzian
stability, full H\"{o}lderian stability, coderivative, combined
second-order subdifferential.

\medskip
\noindent {\bf AMS subject classifications.}\,\ 49K20, 49K30, 35J61.%

\section{Introduction}

The notion of \emph{full Lipschitzian stability} was introduced and
studied by Levy, Poliquin, and Rockafellar \cite{LePoRo00SIOPT} in
the finite dimensional setting. This property then was investigated
in the infinite dimensional setting by Mordukhovich and Nghia
\cite{MorNgh14SIOPT}. Moreover, in \cite{MorNgh14SIOPT}, the authors
also introduced and studied the notion of \emph{full H\"{o}lderian
stability}. The full Lipschitzian and H\"{o}lderian stability is
defined on the basis of two types of parameters: \emph{Basic
parameters} and \emph{tilt ones}, where the last type is related to
the notion of tilt stability introduced by Poliquin and Rockafellar
\cite{PoRo98SIOPT}.

According to \cite[Example~4.4]{MorNgh14SIOPT} (see also
\cite{Robin82MP} for the original example), the two notions of full
Lipschitzian and H\"{o}lderian stability are really different, where
the example shows that the full H\"{o}lderian stability is strictly
weaker than the Lipschitzian one. However, it is interesting to know
which classes of optimization models the Lipschitzian and
H\"{o}lderian full stability properties are equivalent.

In the paper \cite{MorNgh14SIOPT}, the authors provided various
characterizations of both Lipschitzian and H\"{o}lderian full
stability in general optimization models. Then they applied these
results to characterize  (only) the full Lipschitzian stability
of optimal control problems governed by semilinear elliptic partial
differential equations, where the basic parameters appear only in
the cost functional while the state equation and the admissible
control set of the problems are fixed.

In this paper, we study both Lipschitzian and H\"{o}lderian full
stability for a broader class of control problems governed by
semilinear elliptic partial differential equations. Moreover, all
the cost functional, the state equation, and the admissible control
set of the control problems undergo perturbations of basic
parameters. We will provide explicit characterizations of both
Lipschitzian and H\"{o}lderian full stability for the class of
optimal control problems on the basis of the second-order
subdifferential characterizations of the latter stability properties
for parametric optimization problems obtained in
\cite{MorNgh14SIOPT}. In particular, we show that for this class of
optimal control problems the two full stability properties are
equivalent.

Related to our class of control problems with the cost functional
not involving the usual quadratic term for the control and the
admissible control set being fixed, we refer the reader to
\cite{QuiWch17} for the H\"{o}lderian stability of bang-bang optimal
controls in $L^1$ that is not deduced from our results in this
paper; see also \cite{Cas12SICON,CasDWchGWch17,PonWch16OPTIM,PonWch17} for more details on
second-order optimality conditions, solution stability, and
numerical methods for bang-bang controls.

It is worth mentioning another approach to the local Lipschitzian
stability of solutions to parametric optimal control problems for
nonlinear systems; see, e.g., \cite{DontMal99CVOC,MalTro99ZAA,MalTro00CC} and the references therein.
The main tools in stability analysis for such problems are
Robinson's implicit function theorem for generalized equations given
in \cite{Robin80MOR} and an extension of Robinson's theorem obtained
in \cite{Dont95KAP}. However, Robinson's implicit function theorem
does not provide us with any information on the gap between
sufficient conditions and necessary conditions of Lipschitzian
stability while the extension of Robinson's theorem allows to
establish necessary and sufficient conditions of Lipschitzian
stability provided the sufficiently strong dependence of data on the
parameters. For instance, in \cite{MalTro00CC} the authors
established a second-order sufficient condition for Lipschitzian
stability of solutions to elliptic optimal control problems under
nonlinear perturbations in general. The latter condition is also a
second-order necessary condition for Lipschitzian stability only if
the parameter of the problems is a linear perturbed parameter.
Namely, in the terminology of full stability, there is only tilt
parameter in the problems while the basic parameter disappears. In
contrast to this approach, our results obtained in this paper are
necessary and sufficient conditions for Lipschitzian and
H\"{o}lderian stability for both basic and tilt perturbations.

The rest of the paper is organized as follows. A class of optimal
control problems together with assumptions posed on the initial data
of these problems are stated in Section~2. The section also recalls
some notions and facts from variational analysis and basic results
for optimal control problems of semilinear elliptic partial
equations.

In Section~3, we show that the two properties of Lipschitzian and
H\"{o}lderian full stability for a class of optimization problems,
where the cost functional and the state equation of the problems are
perturbed while the admissible control set of the problems is fixed,
are equivalent. This result is applied to deduce that the two
properties of full stability are always equivalent for our class of
optimal control problems when the admissible control set does not
undergo perturbations. In addition, explicit characterizations of
the full stability properties for the class of optimal control
problems in this case are also provided.

Section~4 is devoted to investigate full stability for the class of
optimal control problems, where all the cost functional, the state
equation, and the admissible control set of the problems are
perturbed. We establish explicit characterizations of both
Lipschitzian and H\"{o}lderian full stability properties for the
class of optimal control problems. Interestingly, the two full
stability properties are also equivalent in this setting. Note that
the equivalence of the two properties of full stability is due to
special structures of perturbed admissible control sets, the
properties of full stability are not always equivalent in general
for perturbed admissible control sets in contrast to the setting
considered in the previous section.

Some concluding remarks and further investigations on the full
stability are provided in the last section.

\section{Problem statement and preliminaries}

Consider the optimal control problem
\begin{equation}\label{OptConPro}
\begin{cases}
    {\rm Minimize}\quad J(u)=\displaystyle\int_\Omega L(x,y_u(x))dx
                            +\frac{1}{2}\displaystyle\int_\Omega \zeta(x)u(x)^2dx\\
    {\rm subject\ to}\quad\alpha(x)\leq u(x)\leq\beta(x)\quad\mbox{for a.e.}\ x\in\Omega,
\end{cases}
\end{equation}
where $\zeta\in L^2(\Omega)$ satisfies $\zeta(x)\geq\zeta_0>0$ for
a.e. $x\in\Omega$, and $y_u$ is the weak solution associated to the
control $u$ of the Dirichlet problem
\begin{equation}\label{StateEq}
\begin{cases}
\begin{aligned}
    Ay+f(x,y)&=u\ &&\mbox{in}\ \Omega\\
            y&=0  &&\mbox{on}\ \Gamma,
\end{aligned}
\end{cases}
\end{equation}
where $A$ denotes the second-order differential elliptic operator of
the form
\begin{equation}\label{SOdrEOper}
    Ay(x)=-\sum_{i,j=1}^N\partial_{x_j}\big(a_{ij}(x)\partial_{x_i}y(x)\big).
\end{equation}
From now on, we denote the set of admissible controls by
\begin{equation}\label{AdmCtrlSet}
    \mU_{ad}=\big\{u\in L^2(\Omega)\bst\alpha(x)\leq u(x)\leq\beta(x)~\mbox{for a.e.}~x\in\Omega\big\}.
\end{equation}
Observe that if $\alpha,\beta\in L^\infty(\Omega)$ with
$\alpha(x)\leq\beta(x)$ for a.e. $x\in\Omega$, then $\mU_{ad}$ is
nonempty, closed, bounded, and convex in $L^2(\Omega)$.

In this paper, we will study solution stability of
problem~\eqref{OptConPro}, where the state equation \eqref{StateEq},
the admissible control set \eqref{AdmCtrlSet}, and the cost
functional $J(\cdot)$ given in \eqref{OptConPro} undergo
perturbations. We are interested in the perturbed control problem as
follows
\begin{equation}\label{PerProWtCd}
\begin{cases}
    {\rm Minimize}\quad\mJ(u,e)=J(u+e_y)+(e_J,y_{u+e_y})_{L^2(\Omega)}\\
    {\rm subject~to}\quad u\in\mU_{ad}(e),
\end{cases}
\end{equation}
where $y_{u+e_y}$ is the weak solution associated to $u+e_y$ of the
perturbed Dirichlet problem
\begin{equation}\label{PerStaEqWtCd}
\begin{cases}
\begin{aligned}
    Ay+f(x,y)&=u+e_y\ &&\mbox{in}\ \Omega\\
            y&=0      &&\mbox{on}\ \Gamma,
\end{aligned}
\end{cases}
\end{equation}
the perturbed admissible control set $\mU_{ad}(e)$ is given by
\begin{equation}\label{PrAdCtrlSet}
    \mU_{ad}(e)=\big\{u\in L^2(\Omega)\bst\alpha(x)+e_\alpha(x)\leq u(x)
    \leq\beta(x)+e_\beta(x)~\mbox{for a.e.}~x\in\Omega\big\},
\end{equation}
and $e_J,e_y\in L^2(\Omega)$, $e_\alpha,e_\beta\in L^\infty(\Omega)$
are parameters. We denote the space of parameters by
$E=L^2(\Omega)\times L^2(\Omega)\times L^\infty(\Omega)\times
L^\infty(\Omega)$ with the norm $\|\cdot\|_E$ of
$e=(e_y,e_J,e_\alpha,e_\beta)\in E$ defined by
$$\|e\|_E=\|e_y\|_{L^2(\Omega)}+\|e_J\|_{L^2(\Omega)}+\|e_\alpha\|_{L^\infty(\Omega)}+\|e_\beta\|_{L^\infty(\Omega)}.$$
Let us fix any parameter $\be=(\be_J,\be_y,\be_\alpha,\be_\beta)\in
E$ and consider the following problem
\begin{equation}\label{FxturbPro}
    {\rm Minimize}\quad\mJ(u,\be)=J(u+\be_y)+(\be_J,y_{u+\be_y})_{L^2(\Omega)}\quad
    \mbox{subject to}\quad u\in\mU_{ad}(\be),
\end{equation}
where $y_{u+\be_y}$ is the weak solution associated to $u+\be_y$ of
the Dirichlet problem \eqref{PerStaEqWtCd}. The corresponding
perturbed problem of problem~\eqref{FxturbPro} is defined as follows
\begin{equation}\label{TprturbPro}
    \mP(u^*,e):\quad
    {\rm Minimize}\quad \mJ(u,e)-\langle u^*,u\rangle\quad
    \mbox{subject to}\quad u\in\mU_{ad}(e),
\end{equation}
where $\mJ:L^2(\Omega)\times E\to\R$ is defined in
\eqref{PerProWtCd} and $(u^*,e)\in L^2(\Omega)\times E$ are the
parameters. We interpret $e\in E$ as the basic parameter
perturbations and $u^*\in L^2(\Omega)$ as the tilt ones. Note that
problem~\eqref{FxturbPro} reduces to problem~\eqref{OptConPro} when
$\be=(0,0,0,0)\in E$.

The main contributions of this paper are the following:
\begin{itemize}
\item[(a)] We show that the two properties of full Lipschitzian and full
H\"{o}lderian stability for a class of optimization problems, where
the objective function is of class $\mC^2$ and the constraint set of
the problems is fixed, are always equivalent in general. We will
apply this result to deduce that the full Lipschitzian and full
H\"{o}lderian stability properties for problem~\eqref{TprturbPro}
are equivalent when the admissible control set $\mU_{ad}$ of the
control problem does not undergo perturbation, i.e., the basic
parameter is always in the form $e=(e_y,e_J,0,0)$. Moreover, we
provide explicit characterizations of the two properties for
problem~\eqref{TprturbPro} in this setting.

\item[(b)] We establish explicit characterizations of both Lipschitzian and
H\"{o}lderian full stability for problem~\eqref{TprturbPro}, where
all the cost functional, the state equation, and the admissible
control set of the problem undergo perturbations. In comparison
between the characterizations of the full stability properties
obtained we show that the two full stability properties are also
equivalent for this setting.

\item[(c)] Our results show that the equivalence of the Lipschitzian and H\"{o}lderian full
stability properties depend on the structure of perturbed admissible
control sets. Namely, the full stability properties are not always
equivalent when the admissible control sets undergo perturbations.
\end{itemize}

Given $(\ou^*,\be)\in L^2(\Omega)\times E$, $\ou\in\mU_{ad}(\be)$,
$(u^*,e)\in L^2(\Omega)\times E$, and $\gamma>0$, associated with
these data we define
\begin{equation}\label{ObjsStab}
\begin{cases}
    m_\gamma(u^*,e)=\displaystyle\inf_{u\in\mU_{ad}(e),\, \|u-\ou\|_{L^2(\Omega)}\leq\gamma}\big\{\mJ(u,e)-\langle u^*,u\rangle\big\},\\
    M_\gamma(u^*,e)=\underset{u\in\mU_{ad}(e),\, \|u-\ou\|_{L^2(\Omega)}\leq\gamma}\argmin\big\{\mJ(u,e)-\langle u^*,u\rangle\big\}.
\end{cases}
\end{equation}
Following \cite{LePoRo00SIOPT} and \cite{MorNgh14SIOPT}, we recall
the concepts of Lipschitzian and H\"{o}lderian full stability of the
problem $\mP(\ou^*,\be)$ in \eqref{TprturbPro} as follows:
\begin{itemize}
\item The control $\ou$ is said to be a \emph{Lipschitzian fully stable local
minimizer} of the problem $\mP(\ou^*,\be)$ if there exists a number
$\gamma>0$ such that the mapping $(u^*,e)\mapsto M_\gamma(u^*,e)$ in
\eqref{ObjsStab} is single-valued and locally Lipschitz continuous
with $M_\gamma(\ou^*,\be)=\ou$ and the function $(u^*,e)\mapsto
m_\gamma(u^*,e)$ is also Lipschitz continuous around $(\ou^*,\be)$.

\item We say that the control $\ou$ is a \emph{H\"{o}lderian fully stable local
minimizer} of the problem $\mP(\ou^*,\be)$ if there are
$\gamma,\kappa>0$ such that the mapping $(u^*,e)\mapsto
M_\gamma(u^*,e)$ from \eqref{ObjsStab} is single-valued around
$(\ou^*,\be)$ with $M_\gamma(\ou^*,\be)=\ou$ and the H\"{o}lder
property
\begin{equation}\label{HldrPrpty}
    \big\|M_\gamma(u^*,e)-M_\gamma(\wu^*,\we)\big\|_{L^2(\Omega)}
    \leq\kappa\Big(\|u^*-\wu^*\|_{L^2(\Omega)}+\|e-\we\|^{1/2}_{L^2(\Omega)}\Big)
\end{equation}
holds for any pairs $(u^*,e)$, $(\wu^*,\we)$ in a neighborhood
$U^*\times V$ of $(\ou^*,\be)$, and that the function
$(u^*,e)\mapsto m_\gamma(u^*,e)$ is Lipschitz continuous on
$U^*\times V$.
\end{itemize}

We now assume that $\Omega\subset\R^N$ with $N\in\{1,2,3\}$ and
$\alpha,\beta\in L^2(\Omega)$ with $\alpha(x)<\beta(x)$ for a.e.
$x\in\Omega$. Moreover, the function $L:\Omega\times\R\times\R\to\R$
are Carath\'eodory functions of class $\mC^2$ with respect to the
second and third variables satisfying the following assumptions.

\textbf{(A1)} The function $f$ is of class $\mC^2$ with respect to
the second variable, and
$$f(\cdot,0)\in L^2(\Omega)\quad\mbox{and}\quad\dfrac{\partial f}{\partial y}(x,y)\geq0\quad
  \mbox{for a.e.}\ x\in\Omega,$$
and for all $M>0$ there exists a constant $C_{f,M}>0$ such that
$$\left|\dfrac{\partial f}{\partial y}(x,y)\right|+\left|\dfrac{\partial^2f}{\partial y^2}(x,y)\right|\leq C_{f,M},$$
and
$$\left|\dfrac{\partial^2f}{\partial y^2}(x,y_2)-\dfrac{\partial^2f}{\partial y^2}(x,y_1)\right|
  \leq C_{f,M}|y_2-y_1|,$$
for a.e. $x\in\Omega$ and $|y|,|y_1|,|y_2|\leq M$.

\textbf{(A2)} The function $L(\cdot,0)\in L^1(\Omega)$ and for all
$M>0$ there are a constant $C_{L,M}>0$ and a function $\psi_M\in
L^2(\Omega)$ such that
$$\left|\dfrac{\partial L}{\partial y}(x,y)\right|\leq\psi_M(x),\quad
  \left|\dfrac{\partial^2L}{\partial y^2}(x,y)\right|\leq C_{L,M},$$
and
$$\left|\dfrac{\partial^2L}{\partial y^2}(x,y_2)-\dfrac{\partial^2L}{\partial y^2}(x,y_1)\right|
  \leq C_{L,M}|y_2-y_1|,$$
for a.e. $x\in\Omega$ and $|y|,|y_1|,|y_2|\leq M$.

\textbf{(A3)} The set $\Omega$ is an open and bounded domain in
$\R^N$ with Lipschitz boundary $\Gamma$, the coefficients $a_{ij}\in
L^\infty(\Omega)$ of the second-order differential elliptic operator
$A$ defined by \eqref{SOdrEOper} satisfy the condition
$$\lambda_A\|\xi\|^2_{\R^N}\leq\sum_{i,j=1}^Na_{ij}(x)\xi_i\xi_j,\ \forall\xi\in\R^N,\ \mbox{for a.e.}\ x\in\Omega,$$
for some constant $\lambda_A>0$.

\begin{Theorem}\label{ThmExSoStEq}{\rm(See \cite[Theorem~2.1]{CaReTr08SIOPT})}
Suppose that {\rm\textbf{(A1)}} holds. Then, for every $u\in
L^2(\Omega)$, the state equation~\eqref{StateEq} has a unique
solution $y_u\in H^1_0(\Omega)\cap C(\bar\Omega)$. In addition,
there exists a constant $M_{\alpha,\beta}$ such that
\begin{equation}\label{EstSolEqSt}
    \|y_u\|_{H^1_0(\Omega)}+\|y_u\|_{C(\bar\Omega)}\leq M_{\alpha,\beta},\ \forall u\in\mU_{ad}.
\end{equation}
Furthermore, if $u_n\rightharpoonup u$ weakly in $L^2(\Omega)$, then
$y_{u_n}\to y_u$ strongly in $H^1_0(\Omega)\cap C(\bar\Omega)$.
\end{Theorem}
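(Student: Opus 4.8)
The plan is to work with the weak formulation of \eqref{StateEq}: find $y\in H^1_0(\Omega)$ with
\[
a(y,v)+\int_\Omega f(x,y)v\,dx=\int_\Omega uv\,dx\quad\mbox{for all }v\in H^1_0(\Omega),
\]
where $a(y,v)=\sum_{i,j=1}^N\int_\Omega a_{ij}\partial_{x_i}y\,\partial_{x_j}v\,dx$. For existence and uniqueness I would first derive, by Stampacchia's truncation method applied to $Ay=u-f(\cdot,y)$ and the sign condition $\partial f/\partial y\ge0$ from \textbf{(A1)}, an a priori bound $\|y\|_{L^\infty(\Omega)}\le C(\|u\|_{L^2(\Omega)})$, valid because $N\le3$ forces $2>N/2$. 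This permits replacing $f$ by a truncation with globally bounded derivative, so that the operator $y\mapsto Ay+f(\cdot,y)$ from $H^1_0(\Omega)$ into $H^{-1}(\Omega)$ is bounded, hemicontinuous and strongly monotone: ellipticity in \textbf{(A3)} together with the Poincar\'{e} inequality gives $a(y_1-y_2,y_1-y_2)\ge\lambda_A\|y_1-y_2\|^2_{H^1_0(\Omega)}$, while $\partial f/\partial y\ge0$ makes the nonlinear contribution nonnegative. Existence then follows from the Browder--Minty theorem and uniqueness from strong monotonicity; since $f(\cdot,y)\in L^\infty(\Omega)$, the De Giorgi--Nash--Moser estimate yields $y\in C^{0,\delta}(\bar\Omega)\subset C(\bar\Omega)$.

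For the uniform bound \eqref{EstSolEqSt}, I would test the weak form with $v=y_u$ and split $\int_\Omega f(x,y_u)y_u\,dx=\int_\Omega\big(f(x,y_u)-f(x,0)\big)y_u\,dx+\int_\Omega f(x,0)y_u\,dx$. The first integral is nonnegative by monotonicity, so ellipticity and Poincar\'{e} give
\[
\lambda_A\|y_u\|^2_{H^1_0(\Omega)}\le\big(\|u\|_{L^2(\Omega)}+\|f(\cdot,0)\|_{L^2(\Omega)}\big)\|y_u\|_{L^2(\Omega)},
\]
and since $\mU_{ad}$ is bounded in $L^2(\Omega)$ this bounds $\|y_u\|_{H^1_0(\Omega)}$ uniformly. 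The companion $C(\bar\Omega)$ bound comes from the De Giorgi--Nash--Moser H\"{o}lder estimate, whose constant depends only on $\lambda_A$, $N$, $\Omega$ and the uniformly controlled $L^2(\Omega)$ norm of $u-f(\cdot,y_u)$.

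For the continuity assertion, suppose $u_n\rightharpoonup u$ in $L^2(\Omega)$. Boundedness of $\{u_n\}$ and the previous estimate keep $\{y_{u_n}\}$ bounded in $H^1_0(\Omega)$; extracting a weakly convergent subsequence and using the compact embedding $H^1_0(\Omega)\hookrightarrow\hookrightarrow L^2(\Omega)$, I would pass to the limit in the weak form — the linear term by weak convergence, the nonlinear term by continuity of $f$ and dominated convergence using the uniform $L^\infty$ bound — to identify the limit as $y_u$; uniqueness forces the whole sequence to converge. Strong convergence in $H^1_0(\Omega)$ then follows by testing the difference equation for $z_n:=y_{u_n}-y_u$ with $z_n$ itself: the term $\int_\Omega(u_n-u)z_n\,dx\to0$ because $u_n-u\rightharpoonup0$ while $z_n\to0$ strongly in $L^2(\Omega)$, and the nonlinear term is nonnegative, so $\lambda_A\|z_n\|^2_{H^1_0(\Omega)}\to0$.

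The delicate point, and the one I expect to be the main obstacle, is upgrading to strong convergence in $C(\bar\Omega)$, since the controls converge only weakly and $u_n-u$ need not tend to zero in norm. My plan is to linearize: writing $f(x,y_{u_n})-f(x,y_u)=c_n(x)z_n$ with $c_n(x)=\int_0^1\frac{\partial f}{\partial y}(x,y_u+tz_n)\,dt\ge0$ bounded uniformly in $n$, the difference solves the linear problem $Az_n+c_nz_n=u_n-u$. The associated solution operators are bounded from $L^2(\Omega)$ into $C^{0,\delta}(\bar\Omega)$ with a constant independent of $n$ (again by De Giorgi--Nash--Moser, using $c_n\ge0$), so $\{z_n\}$ is bounded in $C^{0,\delta}(\bar\Omega)$ and hence, by Arzel\`{a}--Ascoli, precompact in $C(\bar\Omega)$. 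Combining this precompactness with the already established convergence $z_n\to0$ in $L^2(\Omega)$ rules out every limit but $0$, whence $z_n\to0$ in $C(\bar\Omega)$ and the proof closes.
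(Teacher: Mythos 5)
The paper itself contains no proof of this statement: Theorem~\ref{ThmExSoStEq} is imported verbatim from \cite[Theorem~2.1]{CaReTr08SIOPT} (see also \cite[Chapter~4]{Trolt10B}), so there is no internal argument to compare against. Your proof is essentially the standard one used in that literature --- Stampacchia truncation for the a priori $L^\infty$ bound (valid because $N\le 3$ gives $2>N/2$), truncation of $f$ combined with Browder--Minty and monotonicity for existence and uniqueness, De~Giorgi--Nash--Moser for global H\"older continuity on the Lipschitz domain, and the monotonicity/compactness interplay (weak $\times$ strong pairing for the $H^1_0$ convergence, uniform $C^{0,\delta}(\bar\Omega)$ bounds plus Arzel\`a--Ascoli for the $C(\bar\Omega)$ convergence) --- and I find it correct.

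One small inaccuracy worth fixing: under \textbf{(A1)} you only have $f(\cdot,0)\in L^2(\Omega)$, so for a bounded solution $y$ one gets $f(\cdot,y)\in L^2(\Omega)$, not $f(\cdot,y)\in L^\infty(\Omega)$ as you assert before invoking the H\"older estimate. This is harmless: the right-hand side $u-f(\cdot,y)$ lies in $L^2(\Omega)$, and an $L^q$ right-hand side with $q>N/2$ suffices for the global H\"older estimate when $N\le 3$, which is exactly the integrability you already exploit in the Stampacchia step. The same remark applies to the uniform bound \eqref{EstSolEqSt} and to the linearized equations $Az_n+c_nz_n=u_n-u$ in your final step, where the relevant right-hand sides are uniformly bounded in $L^2(\Omega)$ (using that $\mU_{ad}$ is bounded in $L^2(\Omega)$), which is all the De~Giorgi--Nash--Moser argument needs.
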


\begin{Theorem}\label{Thm24CRT}{\rm(See \cite[Theorem~2.4]{CaReTr08SIOPT})}
If assumption~{\rm\textbf{(A1)}} holds, then the control-to-state
mapping $G:L^2(\Omega)\to H^1_0(\Omega)\cap C(\bar\Omega)$, defined
by $G(u)=y_u$, is of class $\mC^2$. Moreover, for every $u,v\in
L^2(\Omega)$, $z_{u,v}=G'(u)v$ is the unique weak solution of
\begin{equation}\label{EqSolZuv}
\begin{cases}
  \begin{aligned}
     Az+\frac{\partial f}{\partial y}(x,y)z&=v\ &&\mbox{in}\ \Omega\\
                                          z&=0  &&\mbox{on}\ \Gamma.
  \end{aligned}
\end{cases}
\end{equation}
Finally, for every $v_1,v_2\in L^2(\Omega)$,
$z_{v_1v_2}=G''(u)(v_1,v_2)$ is the unique weak solution of
\begin{equation}\label{EqSolSeGvv}
\begin{cases}
  \begin{aligned}
     Az+\frac{\partial f}{\partial y}(x,y)z+\frac{\partial^2f}{\partial y^2}(x,y)z_{u,v_1}z_{u,v_2}
      &=0\ &&\mbox{in}\ \Omega\\
     z&=0  &&\mbox{on}\ \Gamma,
  \end{aligned}
\end{cases}
\end{equation}
where $y=G(u)$ and $z_{u,v_i}=G'(u)v_i$ for $i=1,2$.
\end{Theorem}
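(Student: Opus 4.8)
The plan is to derive the assertion from the classical implicit function theorem in Banach spaces, using Theorem~\ref{ThmExSoStEq} to guarantee that the implicit function coincides with $G$. Put $Y=H^1_0(\Omega)\cap C(\bar\Omega)$ and let $S:L^2(\Omega)\to Y$ denote the solution operator of the linear problem $Az=g$ in $\Omega$, $z=0$ on $\Gamma$; this operator is well defined and bounded because $A$ is coercive on $H^1_0(\Omega)$ by the ellipticity in \textbf{(A3)} (Lax--Milgram), and because the restriction $N\in\{1,2,3\}$ forces $L^2$-data to produce continuous solutions through elliptic regularity. Writing $N(y)=f(\cdot,y)$ for the superposition operator associated with $f$, the state equation~\eqref{StateEq} is equivalent to the fixed-point relation $y=S\big(u-N(y)\big)$, so I would introduce
\begin{equation*}
    \Psi:Y\times L^2(\Omega)\to Y,\qquad \Psi(y,u)=y-S u+S\,N(y),
\end{equation*}
whose zero set is, by Theorem~\ref{ThmExSoStEq}, exactly the graph of $G$. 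It then remains to verify the two hypotheses of the implicit function theorem and to read off the derivative formulas.

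First I would establish that $\Psi$ is of class $\mC^2$. The map $u\mapsto Su$ is bounded linear, hence smooth, so everything reduces to the smoothness of $y\mapsto S\,N(y)$. By standard results on superposition operators, assumption \textbf{(A1)} (continuity and local boundedness of $f$, $\partial f/\partial y$, $\partial^2 f/\partial y^2$ together with the local Lipschitz estimate on $\partial^2 f/\partial y^2$) guarantees that $N:C(\bar\Omega)\to L^2(\Omega)$ is of class $\mC^2$ with derivatives $N'(y)z=\frac{\partial f}{\partial y}(\cdot,y)\,z$ and $N''(y)(z_1,z_2)=\frac{\partial^2 f}{\partial y^2}(\cdot,y)\,z_1 z_2$. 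Here the role of the supremum-norm component of the two-norm space $Y$ is essential: differentiability of $N$ would fail on a pure $L^p$-space, whereas the continuous embedding $Y\hookrightarrow C(\bar\Omega)$ makes $N$, and hence $S\circ N:Y\to Y$, genuinely $\mC^2$.

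The main point is to show that the partial derivative $\Psi_y(y_u,u)=\mathrm{Id}_Y+S\big(\tfrac{\partial f}{\partial y}(\cdot,y_u)\,\cdot\big)$ is a topological isomorphism of $Y$. Injectivity is easy: $\Psi_y(y_u,u)z=0$ means $Az+\frac{\partial f}{\partial y}(\cdot,y_u)z=0$ in the weak sense, and testing with $z$ gives, by the coercivity in \textbf{(A3)} and the sign condition $\partial f/\partial y\ge 0$ from \textbf{(A1)}, that $z=0$. For surjectivity, given $w\in Y$ I would solve $Az+\frac{\partial f}{\partial y}(\cdot,y_u)z=Aw$ in $H^{-1}(\Omega)$, which by Lax--Milgram (again using \textbf{(A3)} and $\partial f/\partial y\ge 0$) admits a unique $z\in H^1_0(\Omega)$. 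The expected obstacle, and the step requiring care, is to upgrade this $H^1_0$-solution to an element of $Y$: rewriting the equation as $A(z-w)=-\frac{\partial f}{\partial y}(\cdot,y_u)z$, the right-hand side lies in $L^2(\Omega)$ since $z\in H^1_0(\Omega)\subset L^2(\Omega)$ and $\partial f/\partial y\in L^\infty(\Omega)$, whence $z-w=S\big(-\frac{\partial f}{\partial y}(\cdot,y_u)z\big)\in Y$ by the mapping property $S:L^2(\Omega)\to Y$; together with $w\in Y$ this yields $z\in Y$. This regularity bootstrap is precisely where the dimensional restriction $N\le 3$ is consumed.

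With both hypotheses in hand, the implicit function theorem shows that $G$ is of class $\mC^2$ in a neighbourhood of each $u$, and hence on all of $L^2(\Omega)$. Differentiating the identity $\Psi(G(u),u)\equiv 0$ once and using $\Psi_u=-S$ gives $\Psi_y(y_u,u)\,G'(u)v=Sv$, which after applying $S^{-1}=A$ is exactly the linearized problem \eqref{EqSolZuv} for $z_{u,v}=G'(u)v$. Differentiating a second time, and noting that $\Psi$ is affine in $u$ so that all derivatives involving $u$ beyond first order vanish, I obtain
\begin{equation*}
    \Psi_y(y_u,u)\,G''(u)(v_1,v_2)+S\Big(\tfrac{\partial^2 f}{\partial y^2}(\cdot,y_u)\,z_{u,v_1}z_{u,v_2}\Big)=0 ,
\end{equation*}
which, again upon applying $A$, is the equation \eqref{EqSolSeGvv} characterizing $z_{v_1v_2}=G''(u)(v_1,v_2)$. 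This completes the scheme.
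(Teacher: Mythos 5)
Your proposal is correct, and it is essentially the same approach as the source of this statement: the paper itself gives no proof but quotes Theorem~2.4 of \cite{CaReTr08SIOPT}, whose argument is precisely this implicit-function-theorem scheme --- the fixed-point reformulation $y=S\big(u-f(\cdot,y)\big)$ in the two-norm space $Y=H^1_0(\Omega)\cap C(\bar\Omega)$, $\mC^2$ smoothness of the Nemytskii operator obtained through the $C(\bar\Omega)$-embedding, and invertibility of the linearization via Lax--Milgram together with the $L^2(\Omega)\to Y$ regularity of $S$ (which is where $N\le 3$ is consumed). The only points you leave tacit are routine: boundedness of $S$ (closed graph or the a priori estimate behind Theorem~\ref{ThmExSoStEq}) and the ``unique weak solution'' assertions for \eqref{EqSolZuv} and \eqref{EqSolSeGvv}, which follow from the same coercivity argument as your injectivity step.
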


Related to the results on the solution of the state equation
\eqref{StateEq} we refer the reader to \cite[Chapter~4]{Trolt10B}
for more details. We introduce the space $Y=H^1_0(\Omega)\cap
C(\bar\Omega)$ endowed with the norm
$$\|y\|_Y=\|y\|_{H^1_0(\Omega)}+\|y\|_{L^\infty(\Omega)}.$$

\begin{Theorem}\label{ThmFSdDerJ}{\rm(See \cite[Theorem~2.6 and Remark~2.8]{CaReTr08SIOPT})}
Suppose that {\rm\textbf{(A1)}} and {\rm\textbf{(A2)}} hold. The
cost functional $J:L^2(\Omega)\to\R$ is of class $\mC^2$. Moreover,
for every $u,v,v_1,v_2\in L^2(\Omega)$, the first and second
derivatives of $J(\cdot)$ are given by
\begin{equation}\label{FDeriCost}
    J'(u)v=\int_\Omega(\zeta u+\varphi_u)vdx,
\end{equation}
and
\begin{equation}\label{SDeriCost}
    J''(u)(v_1,v_2)
    =\int_\Omega\bigg(\dfrac{\partial^2L}{\partial y^2}(x,y_u)z_{u,v_1}z_{u,v_2}
     +\zeta v_1v_2-\varphi_u\dfrac{\partial^2f}{\partial y^2}(x,y_u)z_{u,v_1}z_{u,v_2}\bigg)dx,
\end{equation}
where $y_u=G(u)$, $z_{u,v_i}=G'(u)v_i$ for $i=1,2$, and
$\varphi_u\in W^{2,p}(\Omega)$ is the adjoint state of $y_u$ defined
as the unique  weak solution of
$$\begin{cases}
\begin{aligned}
    A^*\varphi+\dfrac{\partial f}{\partial y}(x,y_u)\varphi
                     &=\dfrac{\partial L}{\partial y}(x,y_u)\ &&\mbox{in}\ \Omega\\
             \varphi &=0                                      &&\mbox{on}\ \Gamma
\end{aligned}
\end{cases}$$
with $A^*$ being the adjoint operator of $A$.
\end{Theorem}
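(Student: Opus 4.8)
The plan is to write $J=\Phi+Q$, where $Q(u)=\frac12\int_\Omega\zeta u^2\,dx$ is the control cost and $\Phi(u)=\int_\Omega L(x,y_u(x))\,dx$ is the composition of the reduced integral functional with the control-to-state map. The quadratic term $Q$ is handled directly: it is a continuous quadratic form on $L^2(\Omega)$, hence of class $\mC^\infty$, with $Q'(u)v=\int_\Omega\zeta uv\,dx$ and $Q''(u)(v_1,v_2)=\int_\Omega\zeta v_1v_2\,dx$; these account for the $\zeta u$ term in \eqref{FDeriCost} and the $\zeta v_1v_2$ term in \eqref{SDeriCost}. It then remains to differentiate $\Phi=\mathcal{L}\circ G$, where $\mathcal{L}(y):=\int_\Omega L(x,y(x))\,dx$.

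First I would establish that the Nemytskii functional $\mathcal{L}$ is of class $\mC^2$ on bounded subsets of $C(\bar\Omega)$, with $\mathcal{L}'(y)w=\int_\Omega\frac{\partial L}{\partial y}(x,y)w\,dx$ and $\mathcal{L}''(y)(w_1,w_2)=\int_\Omega\frac{\partial^2L}{\partial y^2}(x,y)w_1w_2\,dx$. Here assumption \textbf{(A2)} is exactly what is needed: the bound $|\partial_yL|\le\psi_M$ with $\psi_M\in L^2(\Omega)\subset L^1(\Omega)$ makes $\mathcal{L}'$ a well-defined bounded functional, the bound on $\partial^2_{yy}L$ gives boundedness of $\mathcal{L}''$, and the Lipschitz estimate on $\partial^2_{yy}L$ provides the remainder control yielding continuity of $y\mapsto\mathcal{L}''(y)$ and hence the $\mC^2$ property. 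Since $G:L^2(\Omega)\to Y$ is of class $\mC^2$ by Theorem~\ref{Thm24CRT} and the embedding $Y\hookrightarrow C(\bar\Omega)$ is continuous, the chain rule gives $\Phi\in\mC^2$ together with
$$\Phi'(u)v=\int_\Omega\frac{\partial L}{\partial y}(x,y_u)z_{u,v}\,dx,\qquad
\Phi''(u)(v_1,v_2)=\int_\Omega\frac{\partial^2L}{\partial y^2}(x,y_u)z_{u,v_1}z_{u,v_2}\,dx+\int_\Omega\frac{\partial L}{\partial y}(x,y_u)z_{v_1v_2}\,dx,$$
where $z_{u,v}=G'(u)v$ and $z_{v_1v_2}=G''(u)(v_1,v_2)$ solve \eqref{EqSolZuv} and \eqref{EqSolSeGvv}, respectively.

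The final step converts the terms containing $z_{u,v}$ and $z_{v_1v_2}$ into expressions involving the adjoint state $\varphi_u$, defined as the weak solution of the adjoint equation in the statement. Testing the linearized equation \eqref{EqSolZuv} against $\varphi_u$ and the adjoint equation against $z_{u,v}$, and equating the two resulting bilinear-form expressions (using that $A^*$ is the adjoint of $A$ and that both $z_{u,v}\in H^1_0(\Omega)$ and $\varphi_u\in W^{2,p}(\Omega)\subset H^1_0(\Omega)$ are admissible test functions), yields $\int_\Omega\frac{\partial L}{\partial y}(x,y_u)z_{u,v}\,dx=\int_\Omega\varphi_uv\,dx$; adding $Q'$ then gives \eqref{FDeriCost}. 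Likewise, testing the second linearized equation \eqref{EqSolSeGvv} against $\varphi_u$ and the adjoint equation against $z_{v_1v_2}$ produces $\int_\Omega\frac{\partial L}{\partial y}(x,y_u)z_{v_1v_2}\,dx=-\int_\Omega\varphi_u\frac{\partial^2f}{\partial y^2}(x,y_u)z_{u,v_1}z_{u,v_2}\,dx$; inserting this into $\Phi''$ and adding $Q''$ yields \eqref{SDeriCost}.

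The main obstacle is the rigorous differentiability of the Nemytskii functional $\mathcal{L}$: one must carefully exploit the Lipschitz continuity of $\partial^2_{yy}L$ in \textbf{(A2)} to bound the second-order Taylor remainder and to prove continuity of $y\mapsto\mathcal{L}''(y)$, while keeping track of the fact that $\partial_yL$ is controlled only in $L^2(\Omega)$ rather than $L^\infty(\Omega)$. The adjoint manipulations, which require that all products such as $\frac{\partial^2f}{\partial y^2}(x,y_u)z_{u,v_1}z_{u,v_2}$ lie in $L^2(\Omega)$ (guaranteed since $z_{u,v_i}\in Y\subset C(\bar\Omega)$ by Theorem~\ref{Thm24CRT} and $\partial^2_{yy}f$ is bounded under \textbf{(A1)}), are then routine integration-by-parts arguments in the weak formulations.
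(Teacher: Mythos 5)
This theorem is quoted verbatim from \cite[Theorem~2.6 and Remark~2.8]{CaReTr08SIOPT}; the paper itself contains no proof of it, so there is no internal argument to compare against, only the standard proof in the cited reference. Your proposal is correct and is essentially that standard argument---splitting $J$ into the Tikhonov part and $\mathcal{L}\circ G$, establishing $\mC^2$-differentiability of the Nemytskii functional on $C(\bar\Omega)$ from \textbf{(A2)}, applying the chain rule with the $\mC^2$ control-to-state map of Theorem~\ref{Thm24CRT}, and eliminating $z_{u,v}$ and $z_{v_1v_2}$ via the adjoint equation---with the sole caveat (inherited from the paper's statement rather than introduced by you) that treating $Q(u)=\frac12\int_\Omega\zeta u^2dx$ as a continuous quadratic form on $L^2(\Omega)$ tacitly requires $\zeta\in L^\infty(\Omega)$, since for $\zeta$ merely in $L^2(\Omega)$ this term need not be finite for every $u\in L^2(\Omega)$.
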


For any $p\in[1,\infty]$, we denote $\oB^p_\varepsilon(\ou)$ the
closed ball in the space $L^p(\Omega)$ with the center at $\ou\in
L^p(\Omega)$ and the radius $\varepsilon>0$, i.e.,
$$\oB^p_\varepsilon(\ou)=\{v\in L^p(\Omega)\bst\|v-\ou\|_{L^p(\Omega)}\leq\varepsilon\}.$$
An element $\ou\in\mU_{ad}$ is said to be a
\emph{solution}/\emph{global minimum} of problem~\eqref{OptConPro}
if $J(\ou)\leq J(u)$ for all $u\in\mU_{ad}$. We will say that $\ou$
is a \emph{local solution}/\emph{local minimum} of
problem~\eqref{OptConPro} in the sense of $L^p(\Omega)$ if there
exists a closed ball $\oB^p_\varepsilon(\ou)$ such that $J(\ou)\leq
J(u)$ for all $u\in\mU_{ad}\cap\oB^p_\varepsilon(\ou)$. The local
solution $\ou$ is called \emph{strict} if $J(\ou)<J(u)$ holds for
all $u\in\mU_{ad}\cap\oB^p_\varepsilon(\ou)$ with $u\neq\ou$. Under
the above assumptions {\rm\textbf{(A1)}}-{\rm\textbf{(A3)}},
solutions of problem~\eqref{OptConPro} exist.

\begin{Theorem}{\rm(See \cite[Theorem~2.2]{CaReTr08SIOPT})}
For the assumptions~{\rm\textbf{(A1)}}-{\rm\textbf{(A3)}}, the
control problem \eqref{OptConPro} has at least one solution.
\end{Theorem}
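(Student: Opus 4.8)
The plan is to use the direct method of the calculus of variations. First I would check that the infimum $m:=\inf_{u\in\mU_{ad}}J(u)$ is a finite real number. Since $\alpha,\beta\in L^2(\Omega)$ with $\alpha\le\beta$ a.e., the midpoint $u=\tfrac12(\alpha+\beta)$ lies in $\mU_{ad}$, so the set is nonempty and $m<\infty$; moreover every $u\in\mU_{ad}$ satisfies $|u(x)|\le|\alpha(x)|+|\beta(x)|$ a.e., whence $\mU_{ad}$ is bounded in $L^2(\Omega)$. For the lower bound, Theorem~\ref{ThmExSoStEq} supplies the uniform estimate \eqref{EstSolEqSt}, so the states $y_u$ range in a fixed ball of $C(\bar\Omega)$; combining this with assumption~\textbf{(A2)} (which bounds $L(\cdot,0)$ in $L^1$ and $\partial L/\partial y$ by an $L^2$-function on that range) shows $\int_\Omega L(x,y_u)\,dx$ is uniformly bounded on $\mU_{ad}$, while $\tfrac12\int_\Omega\zeta u^2\,dx\ge0$. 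Hence $m>-\infty$.

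Next I would take a minimizing sequence $\{u_n\}\subset\mU_{ad}$ with $J(u_n)\to m$. Because $\mU_{ad}$ is bounded in the Hilbert space $L^2(\Omega)$, reflexivity yields a subsequence (not relabeled) with $u_n\rightharpoonup\ou$ weakly in $L^2(\Omega)$. The admissible set $\mU_{ad}$ is convex and strongly closed, hence weakly closed by Mazur's theorem, so the weak limit satisfies $\ou\in\mU_{ad}$.

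The decisive step is to establish weak lower semicontinuity of $J$ along this sequence, i.e.\ $J(\ou)\le\liminf_{n\to\infty}J(u_n)$, and I would split $J$ into its two parts. For the state-dependent term, the final assertion of Theorem~\ref{ThmExSoStEq} gives $y_{u_n}\to y_{\ou}$ strongly in $C(\bar\Omega)$; since these states are uniformly bounded and $L$ is Carath\'eodory and continuous in its second variable, it follows that $\int_\Omega L(x,y_{u_n})\,dx\to\int_\Omega L(x,y_{\ou})\,dx$, so this term is in fact \emph{continuous} along the sequence. For the control-cost term, the functional $u\mapsto\tfrac12\int_\Omega\zeta u^2\,dx$ is convex and continuous (here $\zeta\ge\zeta_0>0$), hence weakly lower semicontinuous, which gives $\tfrac12\int_\Omega\zeta\,\ou^2\,dx\le\liminf_n\tfrac12\int_\Omega\zeta\,u_n^2\,dx$. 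Adding the two estimates yields $J(\ou)\le\liminf_n J(u_n)=m$, and since $\ou\in\mU_{ad}$ forces $J(\ou)\ge m$, we conclude $J(\ou)=m$, so $\ou$ solves \eqref{OptConPro}.

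I expect the main obstacle to be the weak lower semicontinuity of the \emph{nonconvex} state term $\int_\Omega L(x,y_u)\,dx$: weak convergence of $u_n$ alone need not pass through the nonlinear control-to-state map and through $L$. The key that unlocks this is the compactness built into Theorem~\ref{ThmExSoStEq}, namely that the control-to-state map sends weakly convergent controls to strongly (uniformly) convergent states, which upgrades this term from merely lower semicontinuous to weakly continuous; the remaining convex quadratic term then causes no difficulty.
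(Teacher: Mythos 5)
The paper itself gives no proof of this statement --- it is quoted directly from \cite[Theorem~2.2]{CaReTr08SIOPT} --- and your direct-method argument (boundedness, convexity, and hence weak closedness of $\mU_{ad}$; the weak-to-strong continuity of the control-to-state map from Theorem~\ref{ThmExSoStEq}, which makes the state term weakly continuous; weak lower semicontinuity of the convex control-cost term) is precisely the standard proof given in that reference, and it is correct. The only point worth polishing is that with $\zeta\in L^2(\Omega)$ the functional $u\mapsto\tfrac12\int_\Omega\zeta u^2\,dx$ should be treated as convex and strongly lower semicontinuous (via Fatou), hence weakly lower semicontinuous, rather than as ``continuous'' on $L^2(\Omega)$.
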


Let us recall concepts and facts of variational analysis and
generalized differentiation taken from \cite{Mor06Ba}. Unless
otherwise stated, every reference norm in a product normed space is
the sum norm. Given a point $u$ in a Banach space $X$ and $\rho>0$,
we denote $B_\rho(u)$ the open ball of center $u$ and radius $\rho$
in $X$, and $\oB_\rho(u)$ is the corresponding closed ball. Let
$F:X\rightrightarrows W$ be a multifunction between Banach spaces.
The graph of $F$, denoted by $\gph F$, is the set $\{(u,v)\in
X\times W\st v\in F(u)\}$. We say that $F$ is locally closed around
the point $\bar\omega=(\ou,\ov)\in\gph F$ if $\gph F$ is locally
closed around $\bar\omega$, i.e., there exists a closed ball
$\oB_\rho(\bar\omega)$ such that $\oB_\rho(\bar\omega)\cap\gph F$ is
closed in $X\times W$. For a multifunction $\Phi:X\rightrightarrows
X^*$, the \emph{sequential Painlev\'e-Kuratowski upper limit} of
$\Phi$ as $u\to \ou$ is defined by
\begin{equation}\label{OtrLimit}
\begin{aligned}
    \displaystyle\Limsup_{u\to\ou}\Phi(u)=\Big\{
    &u^*\in X^*\Bst\mbox{there exist}~u_n\to\ou~\mbox{and}~u^*_n\stackrel{w^*}\rightharpoonup u^*~\mbox{with}\\
    &u^*_n\in\Phi(u_n)~\mbox{for every}~k\in\N=\{1,2,\dots\}\Big\}.
\end{aligned}
\end{equation}
Let $\phi:X\to\oR$ be a proper extended-real-valued function on an
\emph{Asplund} space $X$ (see \cite{Asp68AM} for more details on
Asplund spaces). Assume that $\phi$ is lower semicontinuous (lsc)
around $\ou$ from the domain $\dom\phi=\{u\in X\st\phi(u)<\infty\}$.
The \emph{regular subdifferential} of $\phi$ at $\ou\in\dom\phi$ is
\begin{equation}\label{RegSubdif}
    \widehat{\partial}\phi(\ou)=\bigg\{u^*\in X^*\Bst\liminf_{u\to\ou}
    \frac{\phi(u)-\phi(\ou)-\langle u^*,u-\ou\rangle}{\|u-\ou\|}\geq0\bigg\},
\end{equation}
while the \emph{limiting subdifferential} (known also as
\emph{Mordukhovich subdifferential}) of $\phi$ at $\ou$ is defined
via the sequential outer limit \eqref{OtrLimit} by
\begin{equation}\label{LimSubdif}
    \partial\phi(\ou)=\Limsup_{u\stackrel{\phi}\to\ou}\widehat{\partial}\phi(u),
\end{equation}
where the notation $u\stackrel{\phi}\to\ou$ means that $u\to\ou$
with $\phi(u)\to\phi(\ou)$.

Given a nonempty set $\Theta\subset X$ locally closed around
$\ou\in\Omega$, the \emph{regular and limiting normal cones} to
$\Theta$ at $\ou\in\Theta$ are respectively defined by
\begin{equation}\label{RgLmtgNrCn}
    \widehat{N}(\ou;\Theta)=\widehat{\partial}\delta(\ou;\Theta)
    \quad\mbox{and}\quad N(\ou;\Theta)=\partial\delta(\ou;\Theta),
\end{equation}
where $\delta(\cdot;\Theta)$ is the indicator function of $\Theta$
defined by $\delta(u;\Theta)=0$ for $u\in\Theta$ and
$\delta(u;\Theta)=\infty$ otherwise. The \emph{regular} and
\emph{Mordukhovich coderivatives} of the multifunction
$F:X\rightrightarrows W$ at the point $(\ou,\ov)\in\gph F$ are
respectively the multifunction
$\widehat{D}^*F(\ou,\ov):W^*\rightrightarrows X^*$ defined by
$$\widehat{D}^*F(\ou,\ov)(v^*)=\big\{u^*\in X^*\bst(u^*,-v^*)\in\widehat{N}\big((\ou,\ov);\gph F\big)\big\},~\forall
  v^*\in W^*,$$
and the multifunction $D^*F(\ou,\ov):W^*\rightrightarrows X^*$ given
by
$$D^*F(\ou,\ov)(v^*)=\big\{u^*\in X^*\bst(u^*,-v^*)\in N\big((\ou,\ov);\gph F\big)\big\},~\forall v^*\in W^*.$$
Given any $\ou^*\in\partial\phi(\ou)$, the \emph{combined
second-order subdifferential} of $\phi$ at $\ou$ relative to $\ou^*$
is the multifunction
$\breve{\partial}^2\phi(\ou,\ou^*):X^{**}\rightrightarrows X^*$ with
the values
\begin{equation}\label{CSOdrSbdif}
    \breve{\partial}^2\phi(\ou,\ou^*)(u)=(\widehat{D}^*\partial\phi)(\ou,\ou^*)(u),~\forall u\in X^{**}.
\end{equation}
Note that for $\phi\in\mC^2$ around $\ou$ with
$\ou^*=\nabla\phi(\ou)$ we have
$\breve{\partial}^2\phi(\ou,\ou^*)(u)=\{\nabla^2\phi(\ou)u\}$ for
all $u\in X^{**}$ via the symmetric Hessian operator
$\nabla^2\phi(\ou)$.

We say that the multifunction $F:X\rightrightarrows W$ is
\emph{locally Lipschitz-like}, or $F$ has the \emph{Aubin property}
\cite{DontRock09B}, around a point $(\ou,\ov)\in\gph F$ if there
exist $\ell>0$ and neighborhoods $U$ of $\ou$, $V$ of $\ov$ such
that
$$F(u_1)\cap V\subset F(u_2)+\ell\|u_1-u_2\|\oB_W,~\forall u_1,u_2\in U,$$
where $\oB_W$ denotes the closed unit ball in $W$. Characterization
of this property via the mixed Mordukhovich coderivative of $F$ can
be found in \cite[Theorem~4.10]{Mor06Ba}.

Let us recall the concepts of prox-regularity and subdifferential
continuity of extended-real-valued functions from
\cite{MorNgh14SIOPT}. Given a function $\psi:X\times E\to\oR$ finite
at $(\ou,\be)$ and given a partial limiting subgradient
$\ou^*\in\partial_u\psi(\ou,\be)$ of $\psi(\cdot,\be)$ at $\ou$, we
say that $\psi$ is \emph{prox-regular} in $u$ for $\ou^*$ with
\emph{compatible parameterization} by $e$ at $\be$ if there are
neighborhoods $U$ of $\ou$, $U^*$ of $\ou^*$, and $V$ of $\be$ along
with numbers $\varepsilon>0$ and $r>0$ such that
$$\psi(u,e)\geq\psi(v,e)+\langle u^*,u-v\rangle-\frac{r}{2}\|u-v\|^2,~\forall u\in U,$$
whenever
$$u^*\in\partial_u\psi(v,e)\cap U^*,~(v,e)\in U\times V,~\psi(v,e)\leq\psi(\ou,\be)+\varepsilon.$$
The function $\psi$ is \emph{subdifferentially continuous} in $u$ at
$\ou^*$ with \emph{compatible parameterization} by $e$ at $\be$ if
the mapping $(u,e,u^*)\mapsto\psi(u,e)$ is continuous relative to
$\gph\partial_u\psi$ at $(\ou,\be,\ou^*)$. When $\psi$ is
prox-regular and subdifferentially continuous in $u$ at $\ou^*$ with
compatible parameterization by $e$ at $\be$, $\psi$ is said to be
\emph{parametrically continuously prox-regular} at $(\ou,\be)$ for
$\ou^*$. These concepts are comprehensively studied in finite
dimensional settings; see \cite{LePoRo00SIOPT}. The nonparametric
versions of these concepts can be found in \cite{PoRo96TAMS},
\cite{RoWe98B}. We say that the \emph{basic constraint
qualification} (BCQ) holds at a point $(\ou,\be)\in\dom\psi$ if the
epigraphical mapping $F:E\rightrightarrows X\times\R$ defined by
$F(e)=\epi\psi(\cdot,e)$ is locally Lipschitz-like around
$(\be,\ou,\psi(\ou,\be))\in\gph F$, where the set
$$\epi\psi(\cdot,e):=\{(u,\tau)\in X\times\R\st\tau\geq\psi(u,e)\}$$
is the epigraph of the function $\psi(\cdot,e)$.

\section{Fixed admissible control set and full stability}

Let $X$ be a Hilbert space and let $E$ be an Asplund space. Fix any
$\be\in E$ and consider the problem
\begin{equation}\label{FxHatPro}
    {\rm Minimize}\quad \psi(u,\be)\quad\mbox{over}\quad u\in X,
\end{equation}
where $\psi:X\times E\to\oR$ is an extended-real-valued function.
The corresponding perturbed problem of \eqref{FxHatPro} is as
follows
\begin{equation}\label{PtrbHatPro}
    \mP_{Abs}(u^*,e):\quad{\rm Minimize}\quad \psi(u,e)-\langle u^*,u\rangle\quad\mbox{over}\quad u\in X.
\end{equation}
Here, the subscript ``Abs" refers to the abstract nature of
this optimization problem. We recall
\cite[Theorem~4.9]{MorNgh14SIOPT} on characterization for full
Lipschitzian stability via a perturbed positive definiteness of the
regular coderivative of the function $\partial_u\psi(\cdot,\cdot)$.

\begin{Theorem}\label{ThmLipChRg}{\rm(See \cite[Theorem~4.9]{MorNgh14SIOPT})}
Assume that the BCQ holds at $(\ou,\be)\in\dom\psi$ and that $\psi$
is parametrically continuously prox-regular at $(\ou,\be)$ for
$\ou^*\in\partial_u\psi(\ou,\be)$. Then, the following are
equivalent:
\begin{itemize}
\item[{\rm(i)}] The point $\ou$ is a Lipschitzian fully stable local
minimizer of $\mP_{Abs}(\ou^*,\be)$ in \eqref{PtrbHatPro}.
\item[{\rm(ii)}] The graphical mapping $e\mapsto\gph\partial_u\psi(\cdot,e)$ is locally Lipschitz-like
around $(\be,\ou,\ou^*)$ and there are $\eta,\delta>0$ such that for
all $(u,e,u^*)\in\gph\partial_u\psi\cap\oB_\eta(\ou,\be,\ou^*)$ we
have
\begin{equation}\label{LipChrCB}
    \langle v^*,v\rangle\geq\delta\|v\|^2_{L^2(\Omega)}~\;\mbox{whenever}~\;
    (v^*,e^*)\in\widehat{D}^*(\partial_u\psi)(u,e,u^*)(v)~\mbox{for}~v\in X.
\end{equation}
\end{itemize}
\end{Theorem}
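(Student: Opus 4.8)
The plan is to route the equivalence through the localized tilted stationarity map and then convert the resulting primal stability into the dual coderivative condition. Since the perturbed first-order condition for $\mP_{Abs}(u^*,e)$ reads $u^*\in\partial_u\psi(u,e)$, the localized argmin $M_\gamma$ from \eqref{ObjsStab} is, after the prox-regular localization, an inverse of the partial subdifferential mapping $(u,e)\mapsto\partial_u\psi(u,e)$ tilted by $u^*$. First I would use the parametric continuous prox-regularity to show that, for a sufficiently small radius $\gamma$, the set $M_\gamma(u^*,e)$ coincides with the set of points satisfying $u^*\in\partial_u\psi(u,e)$ inside the $\gamma$-ball, and that each such point is a genuine local minimizer, via the quadratic lower estimate built into prox-regularity. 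This identifies $\gph M_\gamma$ with a localized piece of $\{(u^*,e,u)\st u^*\in\partial_u\psi(u,e)\}$.

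For (ii)$\Rightarrow$(i), I would first combine the coderivative positive definiteness \eqref{LipChrCB} with prox-regularity to upgrade the pointwise dual inequality to a uniform primal \emph{strong monotonicity} estimate $\langle u_1^*-u_2^*,u_1-u_2\rangle\geq\delta'\|u_1-u_2\|^2$ for nearby $(u_1,e,u_1^*),(u_2,e,u_2^*)\in\gph\partial_u\psi$ sharing a common parameter $e$. This is the technical heart of the direction: one integrates the coderivative, which plays the role of a generalized Hessian, along the locally single-valued subdifferential graph by means of a mean-value inequality for set-valued maps. Strong monotonicity yields single-valuedness and the Lipschitz modulus of the $u^*$-dependence of $M_\gamma$, whereas the Lipschitz-like property of $e\mapsto\gph\partial_u\psi(\cdot,e)$ controls the $e$-dependence; together they give the joint Lipschitz continuity of $M_\gamma$. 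The Lipschitz continuity of $m_\gamma$ then follows from an envelope estimate using the local boundedness and Lipschitz behavior of $M_\gamma$.

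For (i)$\Rightarrow$(ii), the Lipschitz continuity of the single-valued $M_\gamma$ in $(u^*,e)$ transfers to the Lipschitz-like property of $e\mapsto\gph\partial_u\psi(\cdot,e)$, whose graph locally contains $\gph M_\gamma$. To obtain \eqref{LipChrCB} I would argue by contradiction: were the positive definiteness to fail at points of $\gph\partial_u\psi$ arbitrarily close to $(\ou,\be,\ou^*)$, then the defining regular-normal-cone inequality of $\widehat{D}^*(\partial_u\psi)$ would furnish admissible perturbations of $(u^*,e)$ along which the localized solution map either loses single-valuedness or violates any prescribed Lipschitz bound, contradicting the full stability in (i).

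The step I expect to be the main obstacle is the passage in (ii)$\Rightarrow$(i) from the \emph{pointwise} coderivative inequality \eqref{LipChrCB} to the \emph{uniform} strong monotonicity over the entire $\eta$-ball. In infinite dimensions the subdifferential graph is in general neither smooth nor a Lipschitzian manifold, so the coderivative cannot be ``integrated'' naively; one must invoke prox-regularity to secure enough local single-valuedness and monotone structure of $\partial_u\psi(\cdot,e)$ and combine it with a careful mean-value estimate phrased through the coderivative. Managing the interaction between the tilt variable $u^*$ and the basic parameter $e$, so that the $e^*$-component appearing in \eqref{LipChrCB} does not corrupt the monotonicity estimate, is the delicate point, and it is precisely here that the Mordukhovich criterion for the Aubin property (cf. \cite[Theorem~4.10]{Mor06Ba}) and the regular, rather than limiting, coderivative must be used together.
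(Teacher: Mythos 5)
The first thing to note is that the paper contains no proof of this statement: Theorem~\ref{ThmLipChRg} is recalled verbatim from \cite[Theorem~4.9]{MorNgh14SIOPT} and is used in the paper purely as a black box (for instance in the proofs of Theorems~\ref{ThmLpHdrEqv} and \ref{ThmCFLPPbm}). There is therefore no in-paper argument to compare your proposal against; the only meaningful benchmark is the proof in the cited reference.

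Measured against that, your outline captures the correct architecture --- identifying $M_\gamma$ from \eqref{ObjsStab} with a localization of the inverse of the tilted stationarity map $u^*\in\partial_u\psi(u,e)$ via prox-regularity, strong monotonicity for (ii)$\Rightarrow$(i), and the Aubin property plus a contradiction argument for (i)$\Rightarrow$(ii) --- but it has a genuine gap exactly at the step you yourself flag. There is no ``mean-value inequality for set-valued maps'' available in Hilbert space that lets you integrate the pointwise regular-coderivative positivity \eqref{LipChrCB} along the graph of $\partial_u\psi(\cdot,e)$ into a uniform strong monotonicity estimate; subdifferential graphs in infinite dimensions do not support such a tool, and invoking it is assuming precisely what must be proved. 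In \cite{MorNgh14SIOPT} this passage is not done by integration at all: full stability is first shown equivalent to a \emph{uniform second-order growth condition} for the perturbed problems, and the coderivative condition is then tied to that growth condition through the characterization of strong local monotonicity of localizations of prox-regular subdifferentials developed in \cite{MorNgh13NA}, whose proof rests on Minty resolvents, Ekeland's variational principle, and single-valuedness of proximal mappings --- not on a mean-value estimate. Your (i)$\Rightarrow$(ii) direction is likewise only asserted: to contradict full stability when \eqref{LipChrCB} fails, one must actually construct, from a sequence of bad coderivative elements $(v^*,e^*)\in\widehat{D}^*(\partial_u\psi)(u,e,u^*)(v)$ with $\langle v^*,v\rangle<\delta\|v\|^2$, admissible perturbations of $(u^*,e)$ along which $M_\gamma$ provably loses single-valuedness or exceeds every Lipschitz modulus; this quantitative construction (which again runs through the growth condition and the variational description of regular normals) is the substance of the theorem, and the proposal does not supply it. In short, your text is a reasonable roadmap of the known proof, but not a proof.
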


\noindent And, we also recall \cite[Theorem~4.7]{MorNgh14SIOPT} on
characterization for full H\"{o}lderian stability via a perturbed
positive definiteness of the combined second-order subdifferential
of the function $\psi_e(\cdot)=\psi(\cdot,e)$.

\begin{Theorem}\label{ThmHdrCmBn}{\rm(See \cite[Theorem~4.7]{MorNgh14SIOPT})}
Assume that the BCQ holds at $(\ou,\be)\in\dom\psi$ and that $\psi$
is parametrically continuously prox-regular at $(\ou,\be)$ for
$\ou^*\in\partial_u\psi(\ou,\be)$. Then, the following are
equivalent:
\begin{itemize}
\item[{\rm(i)}] The point $\ou$ is a H\"{o}lderian fully stable local
minimizer of $\mP_{Abs}(\ou^*,\be)$ in \eqref{PtrbHatPro}.
\item[{\rm(ii)}] There are $\eta,\delta>0$ such that for all
$(u,e,u^*)\in\gph\partial_u\psi\cap\oB_\eta(\ou,\be,\ou^*)$ we have
\begin{equation}\label{HldChrCB}
    \langle v^*,v\rangle\geq\delta\|v\|^2_{L^2(\Omega)}~\;\mbox{whenever}~\;
    v^*\in\breve{\partial}^2\psi_e(u,u^*)(v)~\mbox{for}~v\in X.
\end{equation}
\end{itemize}
\end{Theorem}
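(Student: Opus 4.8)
The plan is to prove the equivalence by handling the two implications separately, using as the pointwise engine the tilt-stability characterization: for a fixed basic parameter $e$, a point $u$ is a tilt-stable local minimizer of $\psi(\cdot,e)-\langle\cdot,u^*\rangle$ (in the sense of Poliquin--Rockafellar, extended to the Hilbert setting) precisely when the combined second-order subdifferential $\breve\partial^2\psi_e(u,u^*)$ is positive definite, i.e. $\langle v^*,v\rangle\geq\delta\|v\|^2$ for all $v^*\in\breve\partial^2\psi_e(u,u^*)(v)$. The prox-regularity and subdifferential continuity of each section $\psi(\cdot,e)$, inherited from the parametric assumptions, are exactly what make this tilt characterization available. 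With this in hand, the theorem reduces to transferring between the \emph{uniform} (in the base point and in $e$) version of the pointwise condition and the Hölderian full stability of the whole parametric family.

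For the implication $\mathrm{(i)}\Rightarrow\mathrm{(ii)}$, I would first argue that Hölderian full stability propagates to nearby base points: if $\ou$ is a Hölderian fully stable local minimizer with localization radius $\gamma$ and modulus $\kappa$, then every triple $(u,e,u^*)\in\gph\partial_u\psi$ sufficiently close to $(\ou,\be,\ou^*)$ again labels a Hölderian fully stable minimizer, with a common $\gamma$ and $\kappa$, since $M_\gamma$ and its Hölder modulus are stable under small shifts of the reference point inside the graph. Fixing the basic parameter at such an $e$ and letting only the tilt vary, the estimate \eqref{HldrPrpty} reduces to local Lipschitz single-valuedness of the tilt-to-minimizer map, that is, to tilt stability at $(u,e,u^*)$. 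Applying the pointwise tilt characterization there yields \eqref{HldChrCB}, and the uniformity of $\delta$ and $\eta$ follows from the uniform radius $\gamma$ and modulus $\kappa$ obtained in the propagation step.

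For the converse $\mathrm{(ii)}\Rightarrow\mathrm{(i)}$, I would integrate the infinitesimal positivity \eqref{HldChrCB} into a uniform quadratic growth condition: prox-regularity together with positive definiteness of $\breve\partial^2\psi_e$ at all nearby graph points forces each section to satisfy, near its minimizer $u_e=M_\gamma(u^*,e)$, a growth $\psi(u,e)-\langle u^*,u\rangle\geq\psi(u_e,e)-\langle u^*,u_e\rangle+\tfrac{\delta}{2}\|u-u_e\|^2$. From this I would extract single-valuedness of $M_\gamma$ (immediate from strict growth), Lipschitz dependence on the tilt $u^*$ (since $u^*$ enters linearly, the localized shifted subgradient mapping is strongly monotone with modulus $\delta$, giving $\|u_{u_1^*}-u_{u_2^*}\|\leq\delta^{-1}\|u_1^*-u_2^*\|$), and Lipschitz continuity of the value $m_\gamma$ (from the uniform growth and the continuous dependence of $\psi$ on $e$).

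The main obstacle, and the step I would treat most carefully, is the Hölder-$\tfrac12$ dependence of $M_\gamma$ on the basic parameter $e$. Unlike the tilt, $e$ enters $\psi$ nonlinearly and, under compatible parameterization, acts only on the function values and not strongly monotonically on the subgradient graph, so no strong-monotonicity argument is available in $e$. Instead I would compare the two strongly growing sections $\psi(\cdot,e)-\langle u^*,\cdot\rangle$ and $\psi(\cdot,\we)-\langle u^*,\cdot\rangle$ through the elementary lemma that if two functions each enjoy $\delta$-quadratic growth at their minimizers and differ by at most $\varepsilon$ on a common neighborhood, then their minimizers differ by at most $(4\varepsilon/\delta)^{1/2}$. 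Controlling $\varepsilon$ by a constant times $\|e-\we\|_E$ via the value-behavior supplied by the compatible parameterization then produces exactly the square-root modulus in \eqref{HldrPrpty}. This square root is precisely the feature distinguishing Hölderian from Lipschitzian full stability, and it explains why the weaker condition \eqref{HldChrCB}---partial positivity in $u$ with no Lipschitz-like requirement on $e\mapsto\gph\partial_u\psi(\cdot,e)$---already suffices here.
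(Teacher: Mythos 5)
The paper itself contains no proof of Theorem~\ref{ThmHdrCmBn}: it is recalled verbatim from \cite[Theorem~4.7]{MorNgh14SIOPT}, so your proposal can only be measured against that source. Your overall architecture --- uniform second-order quadratic growth as the bridge between \eqref{HldChrCB} and full stability, strong monotonicity in the tilt variable, and a square-root comparison argument in the basic parameter --- does mirror the structure of the argument in \cite{MorNgh14SIOPT}. However, two steps have genuine problems.

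First, your ``pointwise engine'' is misstated. Tilt stability of $\psi_e$ at $(u,u^*)$ is \emph{not} equivalent to positive definiteness of $\breve{\partial}^2\psi_e(u,u^*)$ at that single point: both in the finite-dimensional characterization of Poliquin--Rockafellar and in the Hilbert-space one of \cite{MorNgh13NA}, one needs $\langle v^*,v\rangle\geq\delta\|v\|^2$ for $v^*\in\breve{\partial}^2\psi_e(w,w^*)(v)$ at \emph{all} graph points $(w,w^*)\in\gph\partial\psi_e$ near the reference (single-point positivity of the regular coderivative is necessary but not sufficient; only the limiting second-order subdifferential, which already encodes limits from nearby points, admits a one-point characterization in finite dimensions). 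This is repairable, since \eqref{HldChrCB} is itself a neighborhood condition, but the reduction ``fix $e$ and apply the pointwise characterization at $(u,e,u^*)$'' is not valid as written, and the propagation of full stability to nearby graph points in your $\mathrm{(i)}\Rightarrow\mathrm{(ii)}$ step is asserted rather than proved (it is a real lemma in \cite{MorNgh14SIOPT}, resting on prox-regularity and subdifferential continuity).

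Second, and more seriously, the step you flag as the delicate one fails as designed. Your comparison lemma requires the two sections $\psi(\cdot,e)$ and $\psi(\cdot,\we)$ to differ by at most $\varepsilon$ in sup-norm on a common neighborhood, but $\psi$ is extended-real-valued: in the intended applications $\psi(u,e)=\mJ(u,e)+\delta(u;\mU_{ad}(e))$ the effective domain moves with $e$, so the sup-norm difference is identically $+\infty$ however close $e$ and $\we$ are. Moreover, the ``value-behavior supplied by the compatible parameterization'' does not exist: parametric prox-regularity compares values of $\psi(\cdot,e)$ at a \emph{fixed} $e$ only, and subdifferential continuity gives continuity of $\psi$ along $\gph\partial_u\psi$ with no rate. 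The quantitative cross-parameter control in \cite{MorNgh14SIOPT} comes precisely from the BCQ, i.e.\ the Aubin property of the epigraphical map $e\mapsto\epi\psi(\cdot,e)$, which matches any point of $\epi\psi(\cdot,e)$ with a point of $\epi\psi(\cdot,\we)$ at distance $O(\|e-\we\|)$ and lets the growth comparison run through epigraphs rather than function values. Your closing remark that no Lipschitz-like requirement in $e$ is needed is therefore misleading: BCQ is a standing hypothesis and is exactly what rescues this step; what condition \eqref{HldChrCB} omits, relative to the Lipschitzian Theorem~\ref{ThmLipChRg}, is the Aubin property of the \emph{graphical} map $e\mapsto\gph\partial_u\psi(\cdot,e)$, not of the epigraphical one.
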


We now consider the case that
$$\psi(u,e)=\phi(u,e)+\delta(u;K),$$
where $\phi:X\times E\to\oR$ is $\mC^2$ around the point
$(\ou,\be)\in\dom\psi$ and $K$ is a closed and convex subset of $X$.
This means that problem~\eqref{FxHatPro} can be rewritten as follows
$${\rm Minimize}\quad\phi(u,\be)\quad\mbox{subject to}\quad u\in K.$$
In this case, we prove that the properties of full Lipschitzian
stability and full H\"{o}lderian stability are equivalent.

\begin{Theorem}\label{ThmLpHdrEqv}
Let $(\ou,\be)\in\dom\psi$ and $\ou^*\in\partial_u\psi(\ou,\be)$ be
given. The following statements are equivalent:
\begin{itemize}
\item[{\rm(i)}] The point $\ou$ is a Lipschitzian fully stable local
minimizer of $\mP_{Abs}(\ou^*,\be)$ in \eqref{PtrbHatPro}.
\item[{\rm(ii)}] The point $\ou$ is a H\"{o}lderian fully stable local
minimizer of $\mP_{Abs}(\ou^*,\be)$ in \eqref{PtrbHatPro}.
\item[{\rm(iii)}] There are $\eta,\delta>0$ such that for
all $(u,e,u^*)\in\gph\partial_u\psi\cap\oB_\eta(\ou,\be,\ou^*)$ we
have
$$\langle v^*,v\rangle\geq\delta\|v\|^2_{L^2(\Omega)}~\;\mbox{whenever}~\;
  (v^*,e^*)\in\widehat{D}^*(\partial_u\psi)(u,e,u^*)(v)~\mbox{for}~v\in X.$$
\item[{\rm(iv)}] There are $\eta,\delta>0$ such that for all
$(u,e,u^*)\in\gph\partial_u\psi\cap\oB_\eta(\ou,\be,\ou^*)$ we have
$$\langle v^*,v\rangle\geq\delta\|v\|^2_{L^2(\Omega)}~\;\mbox{whenever}~\;
  v^*\in\breve{\partial}^2\psi_e(u,u^*)(v)~\mbox{for}~v\in X.$$
\end{itemize}
\end{Theorem}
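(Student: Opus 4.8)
The plan is to reduce the statement to the two abstract characterizations already recorded, Theorem~\ref{ThmLipChRg} for Lipschitzian stability and Theorem~\ref{ThmHdrCmBn} for H\"{o}lderian stability, and then to exploit the product structure $\psi=\phi+\delta(\cdot;K)$ to identify the two positive-definiteness conditions (iii) and (iv). First I would verify the standing hypotheses of those theorems, namely the BCQ at $(\ou,\be)$ and parametric continuous prox-regularity of $\psi$ at $(\ou,\be)$ for $\ou^*$. Because $K$ is closed and convex, $\delta(\cdot;K)$ is proper, lsc and convex, hence prox-regular and subdifferentially continuous; adding the $\mC^2$ function $\phi$ (whose gradient is locally Lipschitz) preserves these properties, and since the parameter $e$ enters only through the smooth term the parameterization is compatible. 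For the BCQ, I note that $\epi\psi(\cdot,e)=\{(u,\tau)\st u\in K,\ \tau\ge\phi(u,e)\}$, so keeping $u$ and the slack $\tau-\phi(u,e)$ fixed while moving $e$ shows at once that $F(e)=\epi\psi(\cdot,e)$ is locally Lipschitz-like around $(\be,\ou,\psi(\ou,\be))$; hence the BCQ holds.

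Granting the hypotheses, Theorem~\ref{ThmHdrCmBn} gives immediately the equivalence (ii)$\Leftrightarrow$(iv). On the Lipschitzian side, Theorem~\ref{ThmLipChRg} asserts that (i) is equivalent to the conjunction of (iii) with the Lipschitz-likeness of the graphical mapping $e\mapsto\gph\partial_u\psi(\cdot,e)$ around $(\be,\ou,\ou^*)$. I would show that this graphical Lipschitz-likeness is automatic here. Indeed, $\partial_u\psi(u,e)=\nabla_u\phi(u,e)+N(u;K)$, and since $N(\cdot;K)$ is independent of $e$ one has $\gph\partial_u\psi(\cdot,e)=\{(u,w+\nabla_u\phi(u,e))\st(u,w)\in\gph N(\cdot;K)\}$; given a graph point for $e_1$, fixing $(u,w)$ and passing to $e_2$ yields a graph point for $e_2$ at distance $\|\nabla_u\phi(u,e_2)-\nabla_u\phi(u,e_1)\|\le L\|e_1-e_2\|$, with $L$ a local Lipschitz modulus of $\nabla_u\phi$ in $e$. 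Thus the graphical mapping is Lipschitz-like and (i)$\Leftrightarrow$(iii).

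The core of the proof is the equivalence (iii)$\Leftrightarrow$(iv). Writing $\Phi(u,e)=\nabla_u\phi(u,e)$, a single-valued $\mC^1$ map, and $\mN(u,e)=N(u;K)$, I would apply the exact sum rule for the regular coderivative of a set-valued map perturbed by a strictly differentiable single-valued map. For the full mapping $(u,e)\mapsto\partial_u\psi(u,e)=\Phi+\mN$, at a graph point $(u,e,u^*)$ with $w:=u^*-\nabla_u\phi(u,e)\in N(u;K)$, this gives
$$\widehat{D}^*(\partial_u\psi)(u,e,u^*)(v)=\big\{\big(\nabla^2_{uu}\phi(u,e)v+x^*,\,(\nabla^2_{ue}\phi(u,e))^*v\big)\st x^*\in\widehat{D}^*N(\cdot;K)(u,w)(v)\big\},$$
where the vanishing of the $E^*$-component on the $\mN$-part reflects that $\mN$ does not depend on $e$. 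For the fixed-$e$ map the same rule yields
$$\breve{\partial}^2\psi_e(u,u^*)(v)=\widehat{D}^*(\partial\psi_e)(u,u^*)(v)=\big\{\nabla^2_{uu}\phi(u,e)v+x^*\st x^*\in\widehat{D}^*N(\cdot;K)(u,w)(v)\big\}.$$
The decisive point is that the test $\langle v^*,v\rangle\ge\delta\|v\|^2$ involves only the $X^*$-coordinate $v^*$ and never the $e^*$-coordinate, while the two displays exhibit the same set of admissible $v^*$, namely $\{\nabla^2_{uu}\phi(u,e)v+x^*\st x^*\in\widehat{D}^*N(\cdot;K)(u,w)(v)\}$. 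As (iii) and (iv) are quantified over the same points $(u,e,u^*)\in\gph\partial_u\psi\cap\oB_\eta(\ou,\be,\ou^*)$, they are literally the same requirement, so (iii)$\Leftrightarrow$(iv). Chaining the equivalences yields (i)$\Leftrightarrow$(iii)$\Leftrightarrow$(iv)$\Leftrightarrow$(ii).

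I expect the main obstacle to lie in the coderivative bookkeeping of the previous paragraph: justifying the exact regular-coderivative sum rule for $\Phi+\mN$ (legitimate because $\Phi$ is strictly differentiable), checking that the $e$-independence of $\mN$ forces the $E^*$-coordinate of its coderivative to vanish, and confirming that the positive-definiteness test sees only $v^*$. Once these points are settled, the $e$-variable decouples completely and conditions (iii) and (iv) coincide, which is precisely what makes Lipschitzian and H\"{o}lderian full stability equivalent in this setting.
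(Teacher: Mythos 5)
Your proposal is correct and follows essentially the same route as the paper's proof: both reduce the statement to Theorems~\ref{ThmLipChRg} and \ref{ThmHdrCmBn}, establish the Lipschitz-likeness of $e\mapsto\gph\partial_u\psi(\cdot,e)$ by shifting the multiplier by $\phi'_u(u,e_2)-\phi'_u(u,e_1)$, and use the regular-coderivative sum rule of \cite[Theorem~1.62]{Mor06Ba} to obtain $\widehat{D}^*(\partial_u\psi)(u,e,u^*)(v)=\breve{\partial}^2\psi_e(u,u^*)(v)\times\{\phi''_{ue}(u,e)v\}$, so that conditions (iii) and (iv) coincide because the positive-definiteness test ignores the $E^*$-component. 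The only cosmetic difference is that you sketch the BCQ and prox-regularity verifications directly, whereas the paper cites the proof of \cite[Theorem~6.3]{MorNgh14SIOPT} for them.
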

\begin{proof}
According to the proof of \cite[Theorem~6.3]{MorNgh14SIOPT}, the BCQ
holds at $(\ou,\be)\in\dom\psi$ and $\psi$ is parametrically
continuously prox-regular at $(\ou,\be)$ for
$\ou^*\in\partial_u\psi(\ou,\be)$. We now verify that the graphical
mapping $F(e):=\gph\partial_u\psi(\cdot,e)$ is locally
Lipschitz-like around $(\be,\ou,\ou^*)$. Take any
$e_1,e_2\in\oB_\delta(\be)$ and $(u,u^*)\in
F(e_1)\cap\oB_\delta(\be,\ou,\ou^*)$ for $\delta$ small enough.
Then, by setting $\wu^*:=u^*+\phi'_u(u,e_2)-\phi'_u(u,e_1)$, we have
$(u,\wu^*)\in F(e_2)$. Hence, we obtain
$$(u,u^*)\in(u,\wu^*)+\ell\|e_1-e_2\|_E\oB_{L^2(\Omega)\times L^2(\Omega)},$$
where $\ell>0$ is a Lipschitz constant of $\phi'_u(\cdot,\cdot)$
around $(\ou,\be)$. This shows that
$$F(e_1)\cap V\subset F(e_2)+\ell\|e_1-e_2\|_E\oB_{L^2(\Omega)\times E},~\forall e_1,e_2\in U,$$
where $U:=\oB_\delta(\be)$ and $V:=\oB_\delta(\be,\ou,\ou^*)$. This
means that $F$ is locally Lipschitz-like around
$(\be,\ou,\ou^*)\in\gph F$.

In addition, for each
$(u,e,u^*)\in\gph\partial_u\psi\cap\oB_\eta(\ou,\be,\ou^*)$ and for
every $v\in X$, it holds from \cite[Theorem~1.62]{Mor06Ba} that
$$\begin{aligned}
  \widehat{D}^*(\partial_u\psi)(u,e,u^*)(v)
  &=\widehat{D}^*\big(\phi'_u(\cdot,\cdot)+N(\cdot;K)\big)(u,e,u^*)(v)\\
  &=\big(\phi''_{uu}(u,e)v,\phi''_{ue}(u,e)v\big)+\widehat{D}^*N(\cdot;K)\big(u,u^*-\phi'_u(u,e)\big)(v)\times\{0_E\}\\
  &=\Big(\phi''_{uu}(u,e)v+\widehat{D}^*N(\cdot;K)\big(u,u^*-\phi'_u(u,e)\big)(v)\Big)\times\{\phi''_{ue}(u,e)v\}\\
  &=\Big((\phi_e)''(u)v+\widehat{D}^*N(\cdot;K)\big(u,u^*-(\phi_e)'(u)\big)(v)\Big)\times\{\phi''_{ue}(u,e)v\}\\
  &=\widehat{D}^*\big((\phi_e)'(\cdot)+N(\cdot;K)\big)(u,u^*)(v)\times\{\phi''_{ue}(u,e)v\}\\
  &=\breve{\partial}^2\psi_e(u,u^*)(v)\times\{\phi''_{ue}(u,e)v\}.
\end{aligned}$$
Hence, we obtain
$$\begin{aligned}
    (v^*,e^*)\in\widehat{D}^*(\partial_u\psi)(u,e,u^*)(v)
    &\Longleftrightarrow
     \begin{cases}
        v^*\in\breve{\partial}^2\psi_e(u,u^*)(v)\\
        e^*=\phi''_{ue}(u,e)v.
     \end{cases}
\end{aligned}$$
This implies that \eqref{LipChrCB} is equivalent to
\eqref{HldChrCB}. Applying Theorems~\ref{ThmLipChRg} and
\ref{ThmHdrCmBn} we get the assertion of the theorem. $\hfill\Box$
\end{proof}

\medskip
We are going to apply Theorem~\ref{ThmLpHdrEqv} to provide explicit
characterizations of full stability properties for
problem~\eqref{FxturbPro}, where the admissible control set of the
problem is fixed. In this case, problem~\eqref{FxturbPro} is
rewritten as follows
\begin{equation}\label{FxAdCtrPro}
    {\rm Minimize}\quad\mJ(u,\be)=J(u+\be_y)+(\be_J,y_{u+\be_y})_{L^2(\Omega)}\quad
    \mbox{subject to}\quad u\in\mU_{ad},
\end{equation}
and the corresponding perturbed problem of
problem~\eqref{FxAdCtrPro} is defined by
\begin{equation}\label{PrFxAdCtrPro}
    \mP_0(u^*,e):\quad
    {\rm Minimize}\quad \mJ(u,e)-\langle u^*,u\rangle\quad
    \mbox{subject to}\quad u\in\mU_{ad},
\end{equation}
where the basic parameter $e\in E$ always appears in the form
$e=(e_y,e_J,0,0)$.

Following \cite{Har77JMSJ}, we say that a closed and convex subset
$K$ of a Banach space $X$ is \emph{polyhedric} at $\ou\in K$ for
$\hu^*\in N(\ou;K)$ if we have the representation
\begin{equation}\label{CndPlyhrc}
    T_K(\ou)\cap\{\hu^*\}^\bot=\cl\big(\cone(K-\ou)\cap\{\hu^*\}^\bot\big),
\end{equation}
where $\cone(K-\ou)=\bigcup_{t>0}t^{-1}(K-\ou)$ is the radial cone
and $T_K(\ou)=\cl(\cone(K-\ou))$ is the tangent cone to $K$ at
$\ou$. The set $K$ is said to be \emph{polyhedric} if $K$ is
polyhedric at every $u\in K$ for any $u^*\in N(u;K)$. The
polyhedricity property of a set is first introduced in
\cite{Har77JMSJ} and then applied extensively in optimal control;
see, e.g., \cite{Bon98AMO}, \cite{BonSha00B}, \cite{ItoKun08B} and
the references therein.

\begin{Theorem}{\rm(See \cite[Theorem~6.2]{MorNgh14SIOPT})}
For any $\ou\in K$ and $\hu^*\in N(\ou;K)$, we have
\begin{equation}\label{DomCmBnSbd}
    \dom\breve{\partial}^2\delta(\cdot;K)(\ou,\hu^*)\subset-\big(T_K(\ou)\cap\{\hu^*\}^\bot\big).
\end{equation}
If, in addition, $K$ is polyhedric at $\ou\in K$ for $\hu^*$, then
the equality
\begin{equation}\label{EqCmBnSbdf}
    \breve{\partial}^2\delta(\cdot;K)(\ou,\hu^*)(u)=\big(T_K(\ou)\cap\{\hu^*\}^\bot\big)^*
\end{equation}
holds for all $u\in-\big(T_K(\ou)\cap\{\hu^*\}^\bot\big)$.
\end{Theorem}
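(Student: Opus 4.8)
The plan is to translate the statement into the geometry of the graph $\gph N(\cdot;K)$. By the definition \eqref{CSOdrSbdif} of the combined second-order subdifferential, $u^*\in\breve{\partial}^2\delta(\cdot;K)(\ou,\hu^*)(u)$ means exactly that $(u^*,-u)\in\widehat{N}\big((\ou,\hu^*);\gph N(\cdot;K)\big)$, and since the regular normal cone to any set is the polar of its contingent cone, everything is governed by the contingent cone $T$ to $\gph N(\cdot;K)$ at $(\ou,\hu^*)$. I abbreviate the critical cone $C:=T_K(\ou)\cap\{\hu^*\}^\bot$ and the radial critical cone $R:=\cone(K-\ou)\cap\{\hu^*\}^\bot$, so that the polyhedricity hypothesis \eqref{CndPlyhrc} reads $C=\cl R$; throughout $(\cdot)^*$ denotes the polar cone. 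The whole argument then amounts to sandwiching $T$ between explicit cones built from $C$ and $R$.

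For the inclusion \eqref{DomCmBnSbd}, which should require only convexity of $K$, I would feed $\widehat{N}$ the vertical tangent directions. Because $N(\ou;K)$ is a convex cone containing $\hu^*$, the point $(\ou,\hu^*+tw)$ stays in $\gph N(\cdot;K)$ for small $t\ge0$ whenever $w\in N(\ou;K)$, and likewise for $w=-\hu^*$ since $(1-t)\hu^*\in N(\ou;K)$; hence $(0,w)\in T$ for all such $w$. Testing $(u^*,-u)\in\widehat{N}=T^*$ against these directions gives $\langle-u,w\rangle\le0$ for every $w\in N(\ou;K)$ together with $\langle-u,\hu^*\rangle=0$. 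The first relation says $-u\in(N(\ou;K))^*=T_K(\ou)$ by the bipolar theorem for the closed convex cone $T_K(\ou)$ in the Hilbert space, and the second says $-u\in\{\hu^*\}^\bot$; together $-u\in C$, which is exactly \eqref{DomCmBnSbd}.

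For the equality \eqref{EqCmBnSbdf} I would prove the two inclusions by bounding $T$ from below and above. For the lower bound, each $c=v-\ou\in R$ (with $v\in K$ and $\langle\hu^*,v-\ou\rangle=0$) satisfies $\hu^*\in N(\ou+t(v-\ou);K)$ for $t\in[0,1]$, since $\langle\hu^*,w-(\ou+t(v-\ou))\rangle=\langle\hu^*,w-\ou\rangle\le0$ for all $w\in K$; thus $(c,0)\in T$. Testing $\widehat{N}$ against $(c,0)$ gives $\langle u^*,c\rangle\le0$ on $R$, hence on $C=\cl R$ by polyhedricity, so $u^*\in C^*$; this yields the inclusion $\subset$. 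For the upper bound, take $(\xi,\eta)\in T$ realized by $(\ou_k,\hu^*_k)\in\gph N(\cdot;K)$ with $(\ou_k-\ou)/t_k\to\xi$ and $(\hu^*_k-\hu^*)/t_k\to\eta$. The inequalities $\langle\hu^*,\ou_k-\ou\rangle\le0$ and $\langle\hu^*_k,\ou-\ou_k\rangle\le0$ force $\langle\hu^*,\xi\rangle=0$, i.e. $\xi\in C$; and for any $v\in K$ with $\langle\hu^*,v-\ou\rangle=0$, combining $\langle\hu^*_k,v-\ou_k\rangle\le0$ with the bilinear estimate $\langle\hu^*_k,\ou_k-\ou\rangle\le\langle\hu^*_k-\hu^*,\ou_k-\ou\rangle=t_k^2\langle\eta,\xi\rangle+o(t_k^2)$, then dividing by $t_k$ and using $\langle\hu^*,v-\ou\rangle=0$, yields $\langle\eta,v-\ou\rangle\le0$, i.e. $\eta\in R^*$. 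Consequently, for $u^*\in C^*$ and $-u\in C=\cl R$ every $(\xi,\eta)\in T$ obeys $\langle u^*,\xi\rangle\le0$ (as $\xi\in C$) and $\langle-u,\eta\rangle\le0$ (as $\eta\in R^*$ and $-u\in\cl R$), so $(u^*,-u)\in T^*=\widehat{N}$; this is the inclusion $\supset$.

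The main obstacle is the upper estimate on the contingent cone $T$: extracting $\eta\in R^*$ rests on the second-order bookkeeping $\langle\hu^*_k-\hu^*,\ou_k-\ou\rangle=t_k^2\langle\eta,\xi\rangle+o(t_k^2)$ coupled with the first-order sign information $\langle\hu^*,\ou_k-\ou\rangle\le0$, and the decisive use of polyhedricity \eqref{CndPlyhrc} is the final passage from the radial critical cone $R$ to its closure $C$. Without the identity $C=\cl R$ (equivalently $R^*=C^*$) underlying both inclusions, only the weaker radial description of the contingent cone would survive and the characterization by $C$ would fail.
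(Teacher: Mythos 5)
First, a point of comparison: the paper offers no proof of this statement at all --- it is quoted from \cite[Theorem~6.2]{MorNgh14SIOPT} --- so the relevant benchmark is the argument in that reference, which follows the same graphical scheme as yours but runs it through the \emph{weak} contingent cone. Your translation into $\gph N(\cdot;K)$, the vertical tangent directions $(0,w)$, the horizontal directions $(c,0)$ with $c\in R$, and the resulting domain inclusion \eqref{DomCmBnSbd} and inclusion $\subset$ in \eqref{EqCmBnSbdf} are all correct, because those steps use only the inclusion $\widehat{N}\big((\ou,\hu^*);\gph N(\cdot;K)\big)\subset T^*$, which is valid in any normed space. The genuine gap is the principle you invoke for the converse inclusion $\supset$: ``the regular normal cone to any set is the polar of its contingent cone.'' In the infinite-dimensional space $X=L^2(\Omega)$ this is false in general; only $\widehat{N}\subset T^*$ holds. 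The reason is that the $\limsup$ defining $\widehat{N}$ must be verified along \emph{every} sequence $(\ou_k,\hu^*_k)\stackrel{\gph N}\to(\ou,\hu^*)$, whereas membership in $T^*$ only constrains sequences whose difference quotients $\big((\ou_k-\ou)/t_k,(\hu^*_k-\hu^*)/t_k\big)$ converge \emph{in norm}; without compactness, a sequence in the graph need not admit any norm-convergent directional subsequence, so $T^*$ can be strictly larger than $\widehat{N}$. Consequently, establishing $(u^*,-u)\in T^*$ --- which you do correctly --- does not yield $(u^*,-u)\in\widehat{N}$, and the main inclusion $\supset$ of \eqref{EqCmBnSbdf} remains unproved.

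The repair, which is exactly the route of the cited proof, is to work with the weak contingent cone $T_W$ (weak limits of difference quotients). In a reflexive space one has $(T_W)^*\subset\widehat{N}$: if $(u^*,-u)\notin\widehat{N}$, pick a violating sequence, normalize by $t_k=\|\ou_k-\ou\|+\|\hu^*_k-\hu^*\|$, and extract weakly convergent subsequences of the bounded quotients to produce $(\xi,\eta)\in T_W$ with $\langle u^*,\xi\rangle-\langle u,\eta\rangle>0$. So it suffices to prove your upper estimate in the form $T_W\subset C\times R^*$. Here a second defect of your argument surfaces: the bookkeeping identity $\langle\hu^*_k-\hu^*,\ou_k-\ou\rangle=t_k^2\langle\eta,\xi\rangle+o(t_k^2)$ requires norm convergence of both quotients and fails under weak convergence (inner products of two weakly convergent sequences need not converge). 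It can, however, be replaced: weakly convergent sequences are bounded, so $\big|\langle\hu^*_k-\hu^*,\ou_k-\ou\rangle\big|\leq Mt_k^2$ (alternatively, monotonicity of $N(\cdot;K)$ gives this inner product a sign), and after dividing by $t_k$ these terms still vanish, so your two chains of inequalities again give $\langle\hu^*,\xi\rangle=0$ and $\langle\eta,v-\ou\rangle\leq0$; note also that $\xi\in T_K(\ou)$ survives weak limits because $T_K(\ou)$ is convex and closed, hence weakly closed. With $T_W\subset C\times R^*$ in hand, your final computation $(u^*,-u)\in C^*\times\cl R=(C\times R^*)^*\subset(T_W)^*\subset\widehat{N}$ closes the proof, with polyhedricity entering exactly where you placed it, namely in $\cl R=C$ and $R^*=C^*$. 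In short: your scheme and your use of polyhedricity match the intended proof, but as written the decisive step rests on a finite-dimensional duality that does not hold in $L^2(\Omega)$.
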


\begin{Remark}\label{RmkUadPoly}\rm
According to \cite[Lemma~4.13]{BaBoSi14TAMS} (see also
\cite[Lemma~2.4]{KiNhSo17SVAA}), the set of admissible controls
$\mU_{ad}$ is polyhedric at every $u\in\mU_{ad}$ for any $u^*\in
N(u;\mU_{ad})$, and thus $\mU_{ad}$ is polyhedric. Note that this
property also holds for the perturbed admissible control set
$\mU_{ad}(e)$ in \eqref{PrAdCtrlSet} for $e\in E$. This result is a
particular instance of the more general result
\cite[Theorem~3.58]{BonSha00B} which holds true in general Banach
lattices.
\end{Remark}

From now on, for every pair $(u,u^*)$ with $u^*\in N(u;\mU_{ad})$,
we define the critical cone
\begin{equation}\label{CriCone}
    C_0(u,u^*)=T_{\mU_{ad}}(u)\cap\{u^*\}^\bot.
\end{equation}
Since $\mU_{ad}$ is polyhedric at every $u\in\mU_{ad}$ for any
$u^*\in N(u;\mU_{ad})$, from \eqref{EqCmBnSbdf} and \eqref{CriCone}
we deduce that
\begin{equation}\label{EqCBnSdfCne}
    \breve{\partial}^2\delta(\cdot;\mU_{ad})(u,u^*)(v)=C_0(u,u^*)^*,~\forall v\in-C_0(u,u^*).
\end{equation}
Let us define the \emph{normal cone mapping}
$\mN_0:L^2(\Omega)\rightrightarrows L^2(\Omega)$ by setting
\begin{equation}\label{NorMapng}
    \mN_0(u)=N(u;\mU_{ad}),\ \forall u\in L^2(\Omega).
\end{equation}
Applying Theorem~\ref{ThmLpHdrEqv} and
\cite[Theorem~6.3]{MorNgh14SIOPT}, we obtain the second-order
characterization of Lipschitzian and H\"{o}lderian full stability
for the problem $\mP_0(\ou^*,\be)$ in the following theorem.

\begin{Theorem}\label{ThmCFLPPbm}
Assume that the assumptions {\rm\textbf{(A1)}-\textbf{(A3)}} hold.
Given $(\ou,\be)\in\mU_{ad}\times E$ with $\be=(\be_y,\be_J,0,0)$,
let $\ou^*\in\mJ'_u(\ou,\be)+\mN_0(\ou)$ and define
$\hu^*=\ou^*-\mJ'_u(\ou,\be)\in\mN_0(\ou)$. Then, the following are
equivalent:
\begin{itemize}
\item[{\rm(i)}] The control $\ou$ is a Lipschitzian fully stable
local minimizer for $\mP_0(\ou^*,\be)$ in \eqref{PrFxAdCtrPro}.
\item[{\rm(ii)}] The control $\ou$ is a H\"{o}lderian fully stable
local minimizer for $\mP_0(\ou^*,\be)$ in \eqref{PrFxAdCtrPro}.
\item[{\rm(iii)}] There exist $\eta>0$ and $\delta>0$ such that for
$(u,u^*)\in\gph\mN_0\cap\oB_\eta(\ou,\hu^*)$ and
$e\in\oB_\eta(\be)$, we have
\begin{equation}\label{CondChrFLp}
    \mJ''_{uu}(u,e)v^2\geq\delta\|v\|^2_{L^2(\Omega)},~\forall v\in C_0(u,u^*),
\end{equation}
where $C_0(u,u^*)$ is defined by \eqref{CriCone}.
\end{itemize}
\end{Theorem}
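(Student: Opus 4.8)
The plan is to recognize $\mP_0(\ou^*,\be)$ as an instance of the abstract problem $\mP_{Abs}$ in \eqref{PtrbHatPro} and then apply Theorem~\ref{ThmLpHdrEqv}. I would set $\phi(u,e):=\mJ(u,e)$ and $K:=\mU_{ad}$, so that the objective of $\mP_0$ equals $\psi(u,e)=\phi(u,e)+\delta(u;\mU_{ad})$ and $\ou^*\in\mJ'_u(\ou,\be)+\mN_0(\ou)=\partial_u\psi(\ou,\be)$, as the hypothesis of Theorem~\ref{ThmLpHdrEqv} requires. Two structural facts must be checked. First, $K=\mU_{ad}$ is closed and convex in $L^2(\Omega)$, being the order interval in \eqref{AdmCtrlSet}. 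Second, $\phi=\mJ$ is of class $\mC^2$: since $\mJ(u,e)=J(u+e_y)+(e_J,y_{u+e_y})_{L^2(\Omega)}$ depends only on $(u,e_y,e_J)$, with $J$ of class $\mC^2$ (Theorem~\ref{ThmFSdDerJ}) and $G:u\mapsto y_u$ of class $\mC^2$ (Theorem~\ref{Thm24CRT}), the map $(u,e_y)\mapsto J(u+e_y)$ is $\mC^2$ and the pairing $(e_J,G(u+e_y))_{L^2(\Omega)}$ is $\mC^2$ as a continuous bilinear form composed with $\mC^2$ mappings. With these facts, Theorem~\ref{ThmLpHdrEqv} applies and yields at once the equivalence of (i) and (ii), together with their equivalence to the abstract combined second-order condition on $\breve{\partial}^2\psi_e$ in part~(iv) of that theorem.

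It remains to show that this abstract condition is the explicit inequality \eqref{CondChrFLp}. Here I would repeat the coderivative computation displayed inside the proof of Theorem~\ref{ThmLpHdrEqv}: by the sum rule \cite[Theorem~1.62]{Mor06Ba} applied to the $\mC^2$ function $\mJ(\cdot,e)$ and the normal-cone mapping,
$$\breve{\partial}^2\psi_e(u,u^*)(v)=\mJ''_{uu}(u,e)v+\breve{\partial}^2\delta(\cdot;\mU_{ad})(u,q^*)(v),\qquad q^*:=u^*-\mJ'_u(u,e)\in\mN_0(u).$$
The element $q^*$ is exactly the normal-cone element that indexes the critical cone in \eqref{CondChrFLp}, so $(u,q^*)\in\gph\mN_0$ and $C_0(u,q^*)$ is available. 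Invoking the polyhedricity of $\mU_{ad}$ (Remark~\ref{RmkUadPoly}) through \eqref{DomCmBnSbd} and \eqref{EqCBnSdfCne}, this set is empty whenever $v\notin-C_0(u,q^*)$, while for $v\in-C_0(u,q^*)$ it equals $\mJ''_{uu}(u,e)v+C_0(u,q^*)^*$.

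The crux is the inner-product evaluation. For $v\in-C_0(u,q^*)$ and any $w^*\in C_0(u,q^*)^*$, writing $v^*=\mJ''_{uu}(u,e)v+w^*$ gives $\langle v^*,v\rangle=\mJ''_{uu}(u,e)v^2+\langle w^*,v\rangle$; by the polarity relation between $C_0(u,q^*)$ and $C_0(u,q^*)^*$ applied to $-v\in C_0(u,q^*)$, the cross term $\langle w^*,v\rangle$ is nonnegative and attains the value $0$ at $w^*=0$. Hence requiring $\langle v^*,v\rangle\geq\delta\|v\|^2_{L^2(\Omega)}$ for every $v^*\in\breve{\partial}^2\psi_e(u,u^*)(v)$ is equivalent to $\mJ''_{uu}(u,e)v^2\geq\delta\|v\|^2_{L^2(\Omega)}$ for every $v\in-C_0(u,q^*)$, which in turn, because $\mJ''_{uu}(u,e)v^2$ and $\|v\|^2_{L^2(\Omega)}$ are even in $v$, is the inequality over $C_0(u,q^*)$ demanded in \eqref{CondChrFLp}. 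I expect this sign bookkeeping, coordinating the domain restriction $v\in-C_0(u,q^*)$ with the polarity of the cross term, to be the only genuinely delicate point; the rest is calculus.

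Finally I would reconcile the neighborhoods. The map $(u,e,u^*)\mapsto\bigl(u,e,\,u^*-\mJ'_u(u,e)\bigr)$ is a homeomorphism carrying $\gph\partial_u\psi$ onto $\{(u,e,q^*):q^*\in\mN_0(u)\}$ and sending $(\ou,\be,\ou^*)$ to $(\ou,\be,\hu^*)$, because $\mJ'_u$ is continuous. Thus, after shrinking the radius, the ball $\oB_\eta(\ou,\be,\ou^*)$ of the abstract statement corresponds to the pair of balls $\oB_\eta(\ou,\hu^*)$ and $\oB_\eta(\be)$ of \eqref{CondChrFLp}, and under this correspondence $q^*$ is precisely the $u^*$ appearing in $C_0(u,u^*)$. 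Translating the quantifiers accordingly turns the abstract condition into \eqref{CondChrFLp}, and combining this with the equivalences furnished by Theorem~\ref{ThmLpHdrEqv} establishes (i)$\Leftrightarrow$(ii)$\Leftrightarrow$(iii). Alternatively, once the hypotheses are verified, one may cite \cite[Theorem~6.3]{MorNgh14SIOPT} directly for (i)$\Leftrightarrow$(iii) and use Theorem~\ref{ThmLpHdrEqv} only for (i)$\Leftrightarrow$(ii).
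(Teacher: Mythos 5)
Your proposal is correct, and its closing sentence --- verify the structural hypotheses, then invoke \cite[Theorem~6.3]{MorNgh14SIOPT} for (i)$\Leftrightarrow$(iii) and Theorem~\ref{ThmLpHdrEqv} for (i)$\Leftrightarrow$(ii) --- is exactly the paper's proof, which consists of just those two citations after noting that $\mJ$ is $\mC^2$ and $\mU_{ad}$ is polyhedric (Remark~\ref{RmkUadPoly}). Your main route, however, is genuinely more self-contained: rather than treating \cite[Theorem~6.3]{MorNgh14SIOPT} as a black box, you reconstruct the passage from the abstract condition (iv) of Theorem~\ref{ThmLpHdrEqv} to the explicit inequality \eqref{CondChrFLp}, via the sum rule \cite[Theorem~1.62]{Mor06Ba}, the polyhedricity formulas \eqref{DomCmBnSbd} and \eqref{EqCBnSdfCne}, the observation that the cross term $\langle w^*,v\rangle$ is nonnegative for $w^*\in C_0(u,q^*)^*$ and $v\in-C_0(u,q^*)$ (so the binding case is $w^*=0$, and evenness of the quadratic forms converts the quantifier over $-C_0$ into one over $C_0$), and the locally Lipschitz change of variables $(u,e,u^*)\mapsto\bigl(u,e,u^*-\mJ'_u(u,e)\bigr)$ to reconcile the neighborhoods. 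All of these steps are sound; one point worth making explicit is that your sign analysis requires the star in \eqref{EqCBnSdfCne} to denote the nonpositive polar cone of $C_0(u,q^*)$ (the convention of \cite{MorNgh14SIOPT}), since with the opposite dual-cone convention the cross term would point the wrong way and the infimum over $w^*$ would be $-\infty$. It is also worth noting that your hand-made reduction is essentially the same machinery the paper itself deploys in full detail later, in the proof of Theorem~\ref{ThmCFHldr} for the perturbed admissible control set. In short, the paper's citation buys brevity; your derivation buys a proof that does not outsource the critical-cone reduction, at the cost of re-proving a special case of the cited result.
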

\begin{proof}
We see that $(\ou,\be)\in\dom\psi$ and
$\ou^*\in\partial_u\psi(\ou,\be)$, where
$\psi(u,e)=\mJ(u,e)+\delta(u;\mU_{ad})$ with $\mJ(\cdot,\cdot)$
being $\mC^2$ around $(\ou,\be)$. Since the set $\mU_{ad}$ is
polyhedric by Remark~\ref{RmkUadPoly}, according to
\cite[Theorem~6.3]{MorNgh14SIOPT} the control $\ou$ is a
Lipschitzian fully stable local minimizer for $\mP_0(\ou^*,\be)$ in
\eqref{PrFxAdCtrPro} if and only if \eqref{CondChrFLp} holds.
Therefore, applying Theorem~\ref{ThmLpHdrEqv} we obtain the
assertion of the theorem. $\hfill\Box$
\end{proof}

\medskip
This theorem shows that a certain condition on the positive
definiteness of $\mJ''_{uu}$ near $(\ou,\be)$ is necessary and
sufficient for stability. In the remainder of this section, we will
derive an equivalent condition, which is posed only on
$\mJ''_{uu}(\ou,\be)$.

\medskip
We now analyze Theorem~\ref{ThmCFLPPbm} to derive an explicit
characterization for Lipschitzian and H\"{o}lderian full stability
of the problem $\mP_0(\ou^*,\be)$. We denote
$\mC^0_{w^*}(\ou,\hu^*)$ the sequential outer limit of critical
cones $C_0(u,u^*)$ in the weak* topology of $L^2(\Omega)$, i.e.,
\begin{equation}\label{OlmWStopo}
\begin{aligned}
    \mC^0_{w^*}(\ou,\hu^*)
    &=\Limsup_{(u,u^*)\stackrel{\gph\mN_0}\longrightarrow(\ou,\hu^*)}C_0(u,u^*)\\
    &=\Big\{v\in L^2(\Omega)\Bst\exists(u_n,u^*_n)
      \stackrel{\gph\mN_0}\longrightarrow(\ou,\hu^*),v_n\in C_0(u_n,u^*_n),v_n\stackrel{w^*}\rightharpoonup v\Big\},
\end{aligned}
\end{equation}
and $\mC^0_s(\ou,\hu^*)$ the sequential outer limit of $C_0(u,u^*)$
in the strong topology of $L^2(\Omega)$, i.e.,
\begin{equation}\label{OlmSTtopo}
    \mC^0_s(\ou,\hu^*)=\Big\{v\in L^2(\Omega)\Bst\exists(u_n,u^*_n)
    \stackrel{\gph\mN_0}\longrightarrow(\ou,\hu^*),v_n\in C_0(u_n,u^*_n),v_n\to v\Big\}.
\end{equation}

\begin{Lemma}\label{LemOlmWSST}
Let $\ou\in\mU_{ad}$ and $\hu^*\in\mN_0(\ou)$. Then, both
$\mC^0_{w^*}(\ou,\hu^*)$ and $\mC^0_s(\ou,\hu^*)$ are computed by
the formula
\begin{equation}\label{CompWSSTc}
    \mC^0_{w^*}(\ou,\hu^*)=\mC^0_s(\ou,\hu^*)
    =\big\{v\in L^2(\Omega)\bst v(x)\hu^*(x)=0\ \mbox{for a.e.}\ x\in\Omega\big\},
\end{equation}
where $\mC^0_{w^*}(\ou,\hu^*)$ and $\mC^0_s(\ou,\hu^*)$ are
respectively given by \eqref{OlmWStopo} and \eqref{OlmSTtopo}.
\end{Lemma}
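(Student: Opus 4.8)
The plan is to reduce everything to two pointwise descriptions of the cones and then prove a double inclusion that pinches both outer limits to the set on the right of \eqref{CompWSSTc}; I will write $\mathcal K$ for that set. First I would record the box structure of $\mU_{ad}$: for $u\in\mU_{ad}$ one has $u^*\in N(u;\mU_{ad})$ iff $u^*\le 0$ a.e. on $\{u=\alpha\}$, $u^*\ge 0$ a.e. on $\{u=\beta\}$, and $u^*=0$ a.e. on $\{\alpha<u<\beta\}$, while $w\in T_{\mU_{ad}}(u)$ iff $w\ge 0$ a.e. on $\{u=\alpha\}$ and $w\le 0$ a.e. on $\{u=\beta\}$ (the tangent-cone formula being the standard consequence of polyhedricity recorded in Remark~\ref{RmkUadPoly}). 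For such $u^*$ and $w$ the product $u^*w$ is $\le 0$ pointwise, so the orthogonality $\int_\Omega u^*w\,dx=0$ forces $u^*w=0$ a.e.; hence $C_0(u,u^*)=\{w\in T_{\mU_{ad}}(u)\st u^*(x)w(x)=0\ \mbox{a.e.}\}$. Since strong convergence implies weak$^*$ convergence, the inclusion $\mC^0_s(\ou,\hu^*)\subset\mC^0_{w^*}(\ou,\hu^*)$ holds automatically, so it suffices to prove $\mC^0_{w^*}(\ou,\hu^*)\subset\mathcal K$ and $\mathcal K\subset\mC^0_s(\ou,\hu^*)$.

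For the first inclusion I would take $v\in\mC^0_{w^*}(\ou,\hu^*)$ with witnesses $(u_n,u^*_n)\to(\ou,\hu^*)$ strongly in $L^2(\Omega)\times L^2(\Omega)$ and $v_n\in C_0(u_n,u^*_n)$, $v_n\stackrel{w^*}\rightharpoonup v$. Each witness satisfies $u^*_nv_n=0$ a.e. For any $\phi\in L^\infty(\Omega)$ I would split $\int_\Omega u^*_nv_n\phi\,dx=\int_\Omega(u^*_n-\hu^*)v_n\phi\,dx+\int_\Omega\hu^*\phi\,v_n\,dx$: the first term is bounded by $\|u^*_n-\hu^*\|_{L^2(\Omega)}\|v_n\|_{L^2(\Omega)}\|\phi\|_{L^\infty(\Omega)}\to 0$ (weakly convergent sequences are bounded), and the second tends to $\int_\Omega\hu^*\phi\,v\,dx$ because $\hu^*\phi\in L^2(\Omega)$. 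As the left-hand side is identically $0$, I obtain $\int_\Omega\hu^*v\,\phi\,dx=0$ for every $\phi\in L^\infty(\Omega)$, whence $\hu^*v=0$ a.e. and $v\in\mathcal K$.

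The reverse inclusion is where the real work lies, and it is the step I expect to be the main obstacle, since it requires relaxing the tangent-cone sign constraints by hand. Given $v\in\mathcal K$ (so $v=0$ wherever $\hu^*\ne 0$), the only places where $v$ can fail to lie in $C_0(\ou,\hu^*)$ are the weakly active sets $\{\ou=\alpha,\ \hu^*=0,\ v<0\}$ and $\{\ou=\beta,\ \hu^*=0,\ v>0\}$. The idea is to push $\ou$ slightly into the interior exactly on these sets: choosing $\varepsilon_n\downarrow 0$, I would set $u_n=\ou+\varepsilon_n$ on the first set and $u_n=\ou-\varepsilon_n$ on the second, except on the small exceptional set $S_n=\{\beta-\alpha\le 2\varepsilon_n\}$ where there is no room, and $u_n=\ou$ otherwise; since $\alpha<\beta$ a.e., $|S_n|\to 0$. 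Keeping $u^*_n\equiv\hu^*$ (still admissible since $\hu^*=0$ on the perturbed sets, so $\hu^*\in N(u_n;\mU_{ad})$) and taking $v_n=v$ off $S_n$ with $v_n=0$ on $S_n$, I would verify directly, examining the four sets $\{\ou=\alpha,\hu^*=0\}$, $\{\ou=\beta,\hu^*=0\}$, $\{\hu^*\ne 0\}$, $\{\alpha<\ou<\beta\}$ separately, that $v_n\in C_0(u_n,\hu^*)$; the perturbation makes the offending points interior, so the sign restriction disappears there, while $v_n=0$ handles $S_n$.

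Finally I would check the two convergences: $\|u_n-\ou\|_{L^2(\Omega)}^2\le\varepsilon_n^2|\Omega|\to 0$ and $\|v_n-v\|_{L^2(\Omega)}^2=\int_{S_n}v^2\,dx\to 0$ by absolute continuity of the Lebesgue integral, giving $v\in\mC^0_s(\ou,\hu^*)$. Combining $\mathcal K\subset\mC^0_s(\ou,\hu^*)\subset\mC^0_{w^*}(\ou,\hu^*)\subset\mathcal K$ then forces all three to coincide, which is exactly \eqref{CompWSSTc}. The delicate bookkeeping is ensuring that $S_n$ (the degenerate region where $\beta-\alpha$ is too small to perturb into the interior) is discarded without destroying either the graph membership $\hu^*\in N(u_n;\mU_{ad})$ or the strong convergence $v_n\to v$.
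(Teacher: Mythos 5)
Your proof is correct, but the decisive step goes by a genuinely different route than the paper's. The paper settles Lemma~\ref{LemOlmWSST} by combining the tangent-cone formula \eqref{RepreTUad} from \cite[Lemma~4.11]{BaBoSi14TAMS} with polyhedricity and then deferring to the proof of \cite[Proposition~7.3]{MorNgh14SIOPT}; the intended argument (carried out in full for the perturbed sets in Lemma~\ref{LemOlmWS4}) proves the hard inclusion $\big\{v\st v\hu^*=0~\mbox{a.e.}\big\}\subset\mC^0_s(\ou,\hu^*)$ by decomposing $v=v_2-v_1$ into two critical directions built from the positive/negative parts of $v$ on the active sets, invoking polyhedricity \eqref{CndPlyhrc} to approximate $v_1,v_2$ by radial directions $v_{i,n}\in\cone(\mU_{ad}-\ou)\cap\{\hu^*\}^\bot$, and then shifting the base point along the first one, $u_n=\ou+t_nv_{1,n}$, $v_n=v_{2,n}-v_{1,n}$; the membership $\hu^*\in\mN_0(u_n)$ survives because the shift is orthogonal to $\hu^*$. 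You instead avoid polyhedricity entirely in this step and work pointwise with the box structure: you move $\ou$ into the strict interior of $[\alpha,\beta]$ by $\pm\varepsilon_n$ precisely on the sets where the sign constraints on $v$ would fail, discard the exceptional set $S_n=\{\beta-\alpha\le2\varepsilon_n\}$ (whose measure vanishes since $\alpha<\beta$ a.e.), keep $u^*_n=\hu^*$, and take $v_n=v$ off $S_n$. I checked the delicate points: $\hu^*\in\mN_0(u_n)$ does hold because your perturbation deactivates the constraint only where $\hu^*=0$, and $|S_n|\to0$ gives $v_n\to v$ by absolute continuity of the integral; your first inclusion $\mC^0_{w^*}(\ou,\hu^*)\subset\{v\st v\hu^*=0~\mbox{a.e.}\}$ via $L^\infty$ test functions is the same bilinear-splitting idea as the paper's, written more carefully. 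The trade-off: your construction is elementary and self-contained, needing only the pointwise normal/tangent descriptions, but it is tailored to bilateral box constraints with $\alpha<\beta$ a.e.; the paper's template costs the polyhedricity machinery but applies verbatim to any polyhedric convex set, which is exactly why it can be recycled for the perturbed sets $\mU_{ad}(e)$ in Lemma~\ref{LemOlmWS4}. One small attribution slip: the tangent-cone formula you quote is \eqref{RepreTUad}, a fact about box sets from \cite{BaBoSi14TAMS}, not a consequence of polyhedricity --- Remark~\ref{RmkUadPoly} concerns the critical-cone identity \eqref{CndPlyhrc}.
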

\begin{proof}
The set of admissible controls $\mU_{ad}$ is convex and polyhedric
due to Remark~\ref{RmkUadPoly}. In addition, by
\cite[Lemma~4.11]{BaBoSi14TAMS}, we have the representation of
$T_{\mU_{ad}}(\ou)$ as follows
\begin{equation}\label{RepreTUad}
    T_{\mU_{ad}}(\ou)
    =\left\{v\in L^2(\Omega)\left|\,
    \begin{aligned}
        &v(x)\geq0\ \mbox{for}\ x\in\{\ou=\alpha\}\\
        &v(x)\leq0\ \mbox{for}\ x\in\{\ou=\beta\}
    \end{aligned}\right.\right\},
\end{equation}
where
$$\{\ou=\alpha\}=\{x\in\Omega\st \ou(x)=\alpha(x)\}\quad\mbox{and}\quad
  \{\ou=\beta\}=\{x\in\Omega\st \ou(x)=\beta(x)\}.$$
Using the convexity and polyhedricity of $\mU_{ad}$ and the
representation of $T_{\mU_{ad}}(\ou)$ in \eqref{RepreTUad}, by
arguing similarly as in the proof of
\cite[Proposition~7.3]{MorNgh14SIOPT} we obtain \eqref{CompWSSTc}.
$\hfill\Box$
\end{proof}

\medskip
A quadratic form $Q:H\to\R$ on a Hilbert space $H$ is said to be a
\emph{Legendre form} if $Q$ is sequentially weakly lower
semicontinuous and that if $h_n$ converges weakly to $h$ in $H$ and
$Q(h_n)\to Q(h)$ then $h_n$ converges strongly to $h$ in $H$.

\begin{Lemma}\label{LmQLgndrFm}
For any $(\ou,\be_y)\in\mU_{ad}\times L^2(\Omega)$, define the
quadratic form $Q:L^2(\Omega)\to\R$ by
\begin{equation}\label{QdrtcFrmQ}
    Q(h):=\mJ''_{uu}(\ou,\be)h^2,\ \forall h\in L^2(\Omega),
\end{equation}
where $\mJ(\cdot,\cdot)$ is given in \eqref{PerProWtCd}. Then, $Q$
is a Legendre form on $L^2(\Omega)$.
\end{Lemma}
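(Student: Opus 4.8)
The plan is to split $Q$ into a coercive part that controls the $L^2$-norm and a remainder that is sequentially weakly continuous, and then to check the two defining properties of a Legendre form directly. Set $y:=G(\ou+\be_y)$, $z_h:=G'(\ou+\be_y)h$, and $z_{hh}:=G''(\ou+\be_y)(h,h)$ for $h\in L^2(\Omega)$. Differentiating $\mJ(u,\be)=J(u+\be_y)+(\be_J,G(u+\be_y))_{L^2(\Omega)}$ twice in $u$ at $\ou$ gives $\mJ''_{uu}(\ou,\be)h^2=J''(\ou+\be_y)h^2+(\be_J,z_{hh})_{L^2(\Omega)}$. I would insert the formula \eqref{SDeriCost} for $J''$, and to rewrite the last term I introduce the adjoint state $p$ solving $A^*p+\frac{\partial f}{\partial y}(x,y)p=\be_J$ with $p=0$ on $\Gamma$; testing \eqref{EqSolSeGvv} against $p$ yields $(\be_J,z_{hh})_{L^2(\Omega)}=-\int_\Omega p\,\frac{\partial^2 f}{\partial y^2}(x,y)z_h^2\,dx$. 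Collecting terms produces the key decomposition
$$Q(h)=\int_\Omega\zeta h^2\,dx+\int_\Omega g(x)\,z_h^2\,dx,\qquad
  g:=\frac{\partial^2L}{\partial y^2}(\cdot,y)-(\varphi+p)\frac{\partial^2f}{\partial y^2}(\cdot,y),$$
where $\varphi$ is the adjoint state of $\ou+\be_y$ from Theorem~\ref{ThmFSdDerJ}. Since $y\in C(\bar\Omega)$ is bounded and $\varphi,p\in W^{2,p}(\Omega)\hookrightarrow C(\bar\Omega)$, assumptions \textbf{(A1)} and \textbf{(A2)} give $g\in L^\infty(\Omega)$.

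Next I would show the remainder $R(h):=\int_\Omega g\,z_h^2\,dx$ is sequentially weakly continuous. The map $h\mapsto z_h$ solves the linear equation \eqref{EqSolZuv} with the fixed, nonnegative coefficient $\frac{\partial f}{\partial y}(x,y)\ge 0$, so the energy estimate bounds $\|z_h\|_{H^1_0(\Omega)}\le C\|h\|_{L^2(\Omega)}$. If $h_n\rightharpoonup h$ weakly in $L^2(\Omega)$, then $\{z_{h_n}\}$ is bounded in $H^1_0(\Omega)$; Rellich compactness gives a subsequence converging strongly in $L^2(\Omega)$, and passing to the limit in the weak formulation (using $h_n\rightharpoonup h$) identifies the limit as $z_h$ by uniqueness, so the whole sequence satisfies $z_{h_n}\to z_h$ strongly in $L^2(\Omega)$. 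Hence $z_{h_n}^2\to z_h^2$ in $L^1(\Omega)$ and, with $g\in L^\infty(\Omega)$, $R(h_n)\to R(h)$. Thus $h\mapsto z_h$ is compact into $L^2(\Omega)$, which is the analytic core of the argument (and is also implicit in the weak-to-strong continuity of Theorem~\ref{ThmExSoStEq}).

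It remains to verify the two Legendre properties. The principal part $P(h):=\int_\Omega\zeta h^2\,dx$ is convex (as $\zeta\ge 0$) and strongly continuous, hence sequentially weakly lower semicontinuous; adding the weakly continuous $R$ shows $Q=P+R$ is sequentially weakly lower semicontinuous. For the second property, suppose $h_n\rightharpoonup h$ and $Q(h_n)\to Q(h)$. Weak continuity of $R$ forces $P(h_n)\to P(h)$, and expanding gives
$$\int_\Omega\zeta\,(h_n-h)^2\,dx=\int_\Omega\zeta h_n^2\,dx-2\int_\Omega(\zeta h)\,h_n\,dx+\int_\Omega\zeta h^2\,dx,$$
where the middle integral tends to $\int_\Omega\zeta h^2\,dx$ because $\zeta h\in L^2(\Omega)$ and $h_n\rightharpoonup h$. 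The right-hand side therefore tends to $P(h)-2P(h)+P(h)=0$, and the coercivity $\zeta\ge\zeta_0>0$ yields $\zeta_0\|h_n-h\|_{L^2(\Omega)}^2\le\int_\Omega\zeta(h_n-h)^2\,dx\to 0$, so $h_n\to h$ strongly; hence $Q$ is a Legendre form.

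I anticipate two obstacles. The first is analytic and is settled above: the compactness of the linear solution operator $h\mapsto z_h$ into $L^2(\Omega)$, on which the weak continuity of $R$ entirely depends. The second, more delicate point is the cross term in the last display, which requires $\zeta h$ to be an admissible test function against the weakly convergent sequence, i.e. $\zeta h\in L^2(\Omega)$; this is where the essential boundedness of $\zeta$ is used, so that the $\zeta$-weighted inner product induces the same weak topology as $L^2(\Omega)$ and convergence of the $\zeta$-weighted norm, together with weak convergence, upgrades to strong convergence.
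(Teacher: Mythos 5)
Your proof is correct, and while its analytic core --- compactness of $h\mapsto z_h:=G'(\ou+\be_y)h$ from $L^2(\Omega)$ into $L^2(\Omega)$ combined with the coercivity of $\int_\Omega\zeta h^2\,dx$ --- is the same as in the paper, the route is genuinely different in two respects. The paper argues in two stages: it first shows that $R(h)=J''(\ou+\be_y)h^2$ is a Legendre form by splitting it via \eqref{SDeriCost} into the elliptic part $\int_\Omega\zeta h^2\,dx$ (Legendre by \cite[Proposition~3.76]{BonSha00B}) plus a weakly continuous part, and then it treats the remaining term $\big(\be_J,G''(\ou+\be_y)h^2\big)_{L^2(\Omega)}$ separately, showing it passes to the limit along weakly convergent sequences and transferring the Legendre property from $R$ to $Q$. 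You instead absorb that term into the curvature integrand at the outset by introducing the auxiliary adjoint state $p$ solving $A^*p+\frac{\partial f}{\partial y}(x,y)p=\be_J$; testing \eqref{EqSolSeGvv} with $p$ does give $\big(\be_J,G''(\ou+\be_y)(h,h)\big)_{L^2(\Omega)}=-\int_\Omega p\,\frac{\partial^2f}{\partial y^2}(x,y)z_h^2\,dx$, so you obtain a single decomposition $Q(h)=\int_\Omega\zeta h^2\,dx+\int_\Omega g\,z_h^2\,dx$ with one bounded weight $g$, and you then verify the two defining Legendre properties by hand (convexity for weak lower semicontinuity; the expansion of $\int_\Omega\zeta(h_n-h)^2\,dx$ together with $\zeta\geq\zeta_0>0$ for the weak-to-strong upgrade) rather than citing the Legendre-form calculus. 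What your version buys is a more self-contained and unified argument; what it costs is extra PDE input, since $p$ must be bounded for $g\in L^\infty(\Omega)$ to hold --- this is available from the same regularity theory the paper invokes for the adjoint state $\varphi_u$ in Theorem~\ref{ThmFSdDerJ}. One caveat: your final step uses the essential boundedness of $\zeta$ (so that $\zeta h\in L^2(\Omega)$), whereas the paper nominally assumes only $\zeta\in L^2(\Omega)$ with $\zeta\geq\zeta_0$; this is not a gap on your side, because the paper's own proof (continuity of $R_2(h)=\int_\Omega\zeta h^2\,dx$ on $L^2(\Omega)$, and already the asserted $\mC^2$-smoothness of $J$ on $L^2(\Omega)$ in Theorem~\ref{ThmFSdDerJ}) tacitly requires the same boundedness --- you have simply made explicit a hypothesis the paper uses silently.
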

\begin{proof}
We first show that the quadratic form $R(h):=J''(\ou+\be_y)h^2$
defined on $L^2(\Omega)$, where $J(\cdot)$ is given in
\eqref{OptConPro}, is a Legendre form on $L^2(\Omega)$. Due to
\eqref{SDeriCost}, we have
$$\begin{aligned}
    R(h)
    &=\int_\Omega\bigg(\dfrac{\partial^2L}{\partial y^2}(x,y_{\ou+\be_y})z^2_{{\ou+\be_y},h}
     +\zeta h^2-\varphi_{\ou+\be_y}\dfrac{\partial^2f}{\partial y^2}(x,y_{\ou+\be_y})z^2_{{\ou+\be_y},h}\bigg)dx\\
    &=R_1(h)+R_2(h),
\end{aligned}$$
where
$$R_1(h):=\int_\Omega\bigg(\dfrac{\partial^2L}{\partial y^2}(x,y_{\ou+\be_y})-\varphi_{\ou+\be_y}
  \dfrac{\partial^2f}{\partial y^2}(x,y_{\ou+\be_y})\bigg)z^2_{{\ou+\be_y},h}dx$$
and
$$R_2(h):=\int_\Omega\zeta h^2dx.$$
Since $G'(\ou+\be_y):h\mapsto z_{\ou+\be_y,h}$ from $L^2(\Omega)$
into $L^2(\Omega)$ is compact, $R_1(h)$ is a weakly continuous
quadratic form on $L^2(\Omega)$. In addition, we observe that
$R_1(th)=t^2R_1(h)$ for all $h\in L^2(\Omega)$ and $t>0$, i.e.,
$R_1$ is positively homogeneous of degree $2$. On the other hand,
$R_2$ is an elliptic quadratic form on $L^2(\Omega)$ because $R_2$
is continuous and
$$R_2(h)=\int_\Omega\zeta h^2dx\geq\zeta_0\|h\|_{L^2(\Omega)},~\forall h\in L^2(\Omega),$$
where $\zeta(x)\geq\zeta_0>0$ for a.e. $x\in\Omega$. Due to
\cite[Proposition~3.76]{BonSha00B}, $R_2$ is a Legendre form on
$L^2(\Omega)$, and thus $R=R_1+R_2$ is also a Legendre form on
$L^2(\Omega)$.

We now verify that $Q$ is a Legendre form on $L^2(\Omega)$. Indeed,
we see that $Q$ is sequentially weakly lower semicontinuous.
Moreover, we have
$$\begin{aligned}
    Q(h)&=\mJ''_{uu}(\ou,\be)h^2=J''(\ou+\be_y)h^2+\big(\be_J,G''(\ou+\be_y)h^2\big)_{L^2(\Omega)}\\
        &=R(h)+\big(\be_J,G''(\ou+\be_y)h^2\big)_{L^2(\Omega)},
\end{aligned}$$
where $R(h)=J''(\ou+\be_y)h^2$ is a Legendre form on $L^2(\Omega)$.
Suppose that $h_n$ converges weakly to $h$ in $L^2(\Omega)$ and
$Q(h_n)\to Q(h)$. Then, $z_{\ou+\be_y,h_n}$ converges strongly to
$z_{\ou+\be_y,h}$ in $L^2(\Omega)$, and thus $G''(\ou+\be_y)h^2_n$
converges strongly to $G''(\ou+\be_y)h^2$ in $L^2(\Omega)$.
Consequently, we have
$$\big(\be_J,G''(\ou+\be_y)h^2_n\big)_{L^2(\Omega)}\to\big(\be_J,G''(\ou+\be_y)h^2\big)_{L^2(\Omega)}.$$
Combining this with
$R(h_n)=Q(h_n)-\big(\be_J,G''(\ou+\be_y)h^2_n\big)_{L^2(\Omega)}$ we
deduce that
$$R(h_n)\to Q(h)-\big(\be_J,G''(\ou+\be_y)h^2\big)_{L^2(\Omega)}=R(h).$$
This implies that $h_n$ converges strongly to $h$ since $R$ is a
Legendre form. Therefore, we have shown that $Q$ is a Legendre form.
$\hfill\Box$
\end{proof}

\begin{Lemma}\label{LmJCntnous}
Consider $\mJ(\cdot,\cdot)$ in \eqref{PerProWtCd}. The map
$\mJ''_{uu}:L^2(\Omega)\times E\to L^2(\Omega)\times L^2(\Omega)$
defined by setting
\begin{equation}\label{}
    (u,e)\mapsto\mJ''_{uu}(u,e),\ \forall (u,e)\in L^2(\Omega)\times E,
\end{equation}
is continuous on $L^2(\Omega)\times E$.
\end{Lemma}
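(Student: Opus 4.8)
The plan is to establish continuity of the map $(u,e)\mapsto\mJ''_{uu}(u,e)$ by tracing through the explicit formula for the second derivative and showing that each constituent piece depends continuously on the data. From the decomposition used in Lemma~\ref{LmQLgndrFm}, for any $h\in L^2(\Omega)$ we have
$$\mJ''_{uu}(u,e)h^2=J''(u+e_y)h^2+\big(e_J,G''(u+e_y)h^2\big)_{L^2(\Omega)},$$
so it suffices to show that both terms depend continuously on $(u,e)$. Since $\mJ''_{uu}(u,e)$ is a bounded symmetric bilinear form, it is enough to control $\mJ''_{uu}(u,e)$ in operator norm, equivalently to estimate $|\mJ''_{uu}(u_n,e_n)(h_1,h_2)-\mJ''_{uu}(u,e)(h_1,h_2)|$ uniformly over $\|h_1\|,\|h_2\|\leq1$ along an arbitrary convergent sequence $(u_n,e_n)\to(u,e)$ in $L^2(\Omega)\times E$.

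First I would handle the $J''$ term using the formula \eqref{SDeriCost}. The key inputs are the regularity results from Theorems~\ref{Thm24CRT} and \ref{ThmFSdDerJ}: since $G:L^2(\Omega)\to Y$ is of class $\mC^2$, the maps $u\mapsto y_u=G(u)$, $u\mapsto z_{u,v}=G'(u)v$, and $u\mapsto\varphi_u$ all vary continuously as $u+e_y$ varies, and the boundedness assumptions in \textbf{(A1)} and \textbf{(A2)} on the second derivatives $\partial^2L/\partial y^2$ and $\partial^2f/\partial y^2$, together with their local Lipschitz continuity in $y$, let me pass these variations through the integral. Concretely, writing $w_n=u_n+e_{n,y}$ and $w=u+e_y$, I would show $y_{w_n}\to y_w$ in $C(\bar\Omega)$, hence $\partial^2L/\partial y^2(x,y_{w_n})\to\partial^2L/\partial y^2(x,y_w)$ and likewise for $f$ and for $\varphi_{w_n}\to\varphi_w$, while the operators $G'(w_n)\to G'(w)$ in the appropriate operator norm so that $z_{w_n,h_i}\to z_{w,h_i}$ uniformly for $\|h_i\|\leq1$. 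The $\zeta v_1v_2$ term is independent of $(u,e)$ and causes no trouble.

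Next I would treat the second term $\big(e_J,G''(u+e_y)h^2\big)_{L^2(\Omega)}$. Here I split the difference across the sequence as
$$\big(e_{n,J},G''(w_n)h^2\big)-\big(e_J,G''(w)h^2\big)
=\big(e_{n,J}-e_J,G''(w_n)h^2\big)+\big(e_J,G''(w_n)h^2-G''(w)h^2\big),$$
where the first summand is controlled by $\|e_{n,J}-e_J\|_{L^2(\Omega)}$ times the uniform bound on $\|G''(w_n)\|$ coming from the $\mC^2$ regularity of $G$ on bounded sets, and the second summand tends to zero because $G''(w_n)\to G''(w)$ as bounded bilinear operators, again by the continuity of the second derivative of the $\mC^2$ map $G$. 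The uniform boundedness over $\|h\|\leq1$ is what lets me conclude operator-norm convergence of $\mJ''_{uu}$ rather than mere pointwise convergence.

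The main obstacle I anticipate is not conceptual but bookkeeping: I must keep all estimates uniform over the unit ball of $L^2(\Omega)$ in the test directions $h,h_1,h_2$ so that the conclusion is genuine norm-continuity of the operator-valued map, and I must carefully invoke the correct mode of convergence for each factor — uniform convergence of the state $y_{w_n}$ and adjoint state $\varphi_{w_n}$ in $C(\bar\Omega)$ to dominate the pointwise convergence of the coefficient functions inside the integrals, and operator-norm convergence of $G'$ and $G''$ to handle the quadratic dependence on $h$. The local Lipschitz bounds on $\partial^2 f/\partial y^2$ and $\partial^2 L/\partial y^2$ from \textbf{(A1)}--\textbf{(A2)} are exactly what make the coefficient differences dominated and allow a dominated-convergence argument in each integral, so the estimate closes.
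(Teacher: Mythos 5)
Your proposal is correct and takes essentially the same approach as the paper: the same decomposition $\mJ''_{uu}(u,e)(h_1,h_2)=J''(u+e_y)(h_1,h_2)+\big(e_J,G''(u+e_y)(h_1,h_2)\big)_{L^2(\Omega)}$, the same add-and-subtract splitting of the pairing term with $e_J$, and the same reliance on operator-norm convergence of $G''$ coming from the $\mC^2$ regularity in Theorem~\ref{Thm24CRT}, with all estimates taken uniformly over unit test directions. The only difference is that you re-derive the continuity of $J''$ by unpacking the integral formula \eqref{SDeriCost} with dominated convergence, whereas the paper simply invokes $J\in\mC^2$ from Theorem~\ref{ThmFSdDerJ}; this makes your treatment of that term more laborious than necessary but not different in substance.
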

\begin{proof}
Let any sequence $(u,e)$ converge to $(\ou,\be)$ in
$L^2(\Omega)\times E$, where $e=(e_y,e_J,e_\alpha,e_\beta)$ and
$\be=(\be_y,\be_J,\be_\alpha,\be_\beta)$. According to
\eqref{PerProWtCd}, we have
$$\mJ(u,e)=J(u+e_y)+(e_J,y_{u+e_y})_{L^2(\Omega)}=J(u+e_y)+\big(e_J,G(u+e_y)\big)_{L^2(\Omega)}.$$
By Theorems~\ref{Thm24CRT} and \ref{ThmFSdDerJ}, $J(\cdot)$ and
$G(\cdot)$ are of class $\mC^2$, thus we have
$$\mJ''_{uu}(u,e)(h_1,h_2)=J''(u+e_y)(h_1,h_2)+\big(e_J,G''(u+e_y)(h_1,h_2)\big)_{L^2(\Omega)}.$$
In addition, when $(u,e)\to(\ou,\be)$, we have $u+e_y\to\ou+\be_y$
and $e_J\to\be_J$, and it follows that
\begin{equation}\label{JGestmate}
    J''(u+e_y)\to J''(\ou+\be_y)\quad\mbox{and}\quad G''(u+e_y)\to G''(\ou+\be_y)
\end{equation}
strongly in $L^2(\Omega)\times L^2(\Omega)$ and
$\mathscr{L}(L^2(\Omega)\times L^2(\Omega),L^2(\Omega))$,
respectively. Moreover, we have
\begin{equation}\label{eJGestmte}
\begin{aligned}
    &\big\|\big(e_J,G''(u+e_y)(\cdot,\cdot)\big)_{L^2(\Omega)}
        -\big(\be_J,G''(\ou+\be_y)(\cdot,\cdot)\big)_{L^2(\Omega)}\big\|_{L^2(\Omega)\times L^2(\Omega)}\\
    &\qquad\leq\big\|\big(e_J,G''(u+e_y)(\cdot,\cdot)
        -G''(\ou+\be_J)(\cdot,\cdot)\big)_{L^2(\Omega)}\big\|_{L^2(\Omega)\times L^2(\Omega)}\\
    &\qquad\quad+\big\|\big(e_J-\be_J,G''(\ou+\be_y)(\cdot,\cdot)\big)_{L^2(\Omega)}\big\|_{L^2(\Omega)
        \times L^2(\Omega)}\\
    &\qquad=\sup_{\|h_1\|=\|h_2\|=1}\Big|\big(e_J,[G''(u+e_y)
        -G''(\ou+\be_y)](h_1,h_2)\big)_{L^2(\Omega)}\Big|\\
    &\qquad\quad+\sup_{\|h_1\|=\|h_2\|=1}\Big|\big(e_J-\be_J,G''(\ou+\be_y)(h_1,h_2)\big)_{L^2(\Omega)}\Big|\\
    &\qquad\leq\|e_J\|_{L^2(\Omega)}\sup_{\|h_1\|=\|h_2\|=1}\|[G''(u+e_y)
        -G''(\ou+\be_y)](h_1,h_2)\|_{L^2(\Omega)}\\
    &\qquad\quad+\|e_J-\be_J\|_{L^2(\Omega)}\sup_{\|h_1\|=\|h_2\|=1}\|G''(\ou+\be_y)(h_1,h_2)\|_{L^2(\Omega)}\\
    &\qquad\leq\|e_J\|_{L^2(\Omega)}\|G''(u+e_y)-G''(\ou+\be_y)\|_{\mathscr{L}(L^2(\Omega)\times
        L^2(\Omega),L^2(\Omega))}\\
    &\qquad\quad+\|e_J-\be_J\|_{L^2(\Omega)}\|G''(\ou+\be_y)\|_{\mathscr{L}(L^2(\Omega)\times L^2(\Omega),
        L^2(\Omega))}\\
    &\qquad\to0.
\end{aligned}
\end{equation}
From \eqref{JGestmate} and \eqref{eJGestmte} we deduce that
$$\|\mJ''_{uu}(u,e)-\mJ''_{uu}(\ou,\be)\|_{L^2(\Omega)\times L^2(\Omega)}\to0.$$
This show that the map $(u,e)\mapsto\mJ''_{uu}(u,e)$ is continuous
on $L^2(\Omega)\times E$. $\hfill\Box$
\end{proof}

\begin{Theorem}\label{ThmChLpBrUE}
Assume that the assumptions {\rm\textbf{(A1)}-\textbf{(A3)}} hold.
Given $(\ou,\be)\in\mU_{ad}\times E$ with $\be=(\be_y,\be_J,0,0)$,
let
$$\ou^*\in\zeta(\ou+\be_y)+\varphi_{\ou+\be_y}+G'(\ou+\be_y)^*\be_J+\mN_0(\ou)$$
and define
$$\hu^*=\ou^*-\zeta(\ou+\be_y)-\varphi_{\ou+\be_y}-G'(\ou+\be_y)^*\be_J.$$
Then, the following statements are equivalent:
\begin{itemize}
\item[{\rm(i)}] The control $\ou$ is a Lipschitzian fully stable local
minimizer for $\mP_0(\ou^*,\be)$ in \eqref{PrFxAdCtrPro}.
\item[{\rm(ii)}] The control $\ou$ is a H\"{o}lderian fully stable local
minimizer for $\mP_0(\ou^*,\be)$ in \eqref{PrFxAdCtrPro}.
\item[{\rm(iii)}] The following condition holds that
\begin{equation}\label{ExCdChrFLp}
    \mJ''_{uu}(\ou,\be)v^2>0,\ \forall v\neq0\ \mbox{with}\ v(x)\hu^*(x)=0~\mbox{for a.e.}~x\in\Omega,
\end{equation}
where $\be=(\be_y,\be_J,0,0)$.
\end{itemize}
\end{Theorem}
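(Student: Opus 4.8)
The goal is to replace the neighborhood-based positive-definiteness condition in Theorem~\ref{ThmCFLPPbm}(iii) with the single pointwise condition \eqref{ExCdChrFLp} posed only at $(\ou,\be)$. By Theorem~\ref{ThmCFLPPbm}, statements (i) and (ii) are already equivalent to each other and to the existence of $\eta,\delta>0$ with
$$\mJ''_{uu}(u,e)v^2\geq\delta\|v\|^2_{L^2(\Omega)},\ \forall v\in C_0(u,u^*),$$
for all $(u,u^*)\in\gph\mN_0\cap\oB_\eta(\ou,\hu^*)$ and $e\in\oB_\eta(\be)$. So the whole theorem reduces to proving that this uniform neighborhood condition is equivalent to \eqref{ExCdChrFLp}. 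The plan is to prove both implications, where the forward direction (neighborhood condition $\Rightarrow$ pointwise) is essentially trivial by specializing to $(u,u^*)=(\ou,\hu^*)$ and $e=\be$ and noting $C_0(\ou,\hu^*)\subset\{v\st v\hu^*=0\ \text{a.e.}\}$, while the reverse direction is the substantive one.

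For the reverse direction, I would argue by contradiction. Suppose \eqref{ExCdChrFLp} holds but the uniform condition fails for every $\eta=\delta=1/n$. This produces sequences $(u_n,u_n^*)\to(\ou,\hu^*)$ in $\gph\mN_0$, $e_n\to\be$, and unit vectors $v_n\in C_0(u_n,u_n^*)$ with $\mJ''_{uu}(u_n,e_n)v_n^2<\tfrac1n$. Passing to a weakly convergent subsequence $v_n\rightharpoonup v$ (the closed unit ball in $L^2(\Omega)$ is weakly compact), the first key step is to locate the weak limit: by the outer-limit computation of Lemma~\ref{LemOlmWSST}, $v\in\mC^0_{w^*}(\ou,\hu^*)=\{w\st w\hu^*=0\ \text{a.e.}\}$. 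The second key step is to push the second-derivative inequality to the limit. Using the continuity of $\mJ''_{uu}(\cdot,\cdot)$ from Lemma~\ref{LmJCntnous} to replace $\mJ''_{uu}(u_n,e_n)$ by $\mJ''_{uu}(\ou,\be)$ up to vanishing error (the error is controlled since $\|v_n\|=1$ and the operators converge in norm), together with the sequential weak lower semicontinuity of the Legendre form $Q(\cdot)=\mJ''_{uu}(\ou,\be)(\cdot)^2$ from Lemma~\ref{LmQLgndrFm}, I would obtain
$$Q(v)\leq\liminf_n\mJ''_{uu}(\ou,\be)v_n^2=\liminf_n\mJ''_{uu}(u_n,e_n)v_n^2\leq0.$$

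The third step is to rule out $v=0$, and this is where the Legendre form property does its essential work. If $v\neq0$, then $v\hu^*=0$ a.e. together with $Q(v)\leq0$ contradicts \eqref{ExCdChrFLp}, and we are done. If instead $v=0$, then $Q(v)=0$, so the chain above forces $Q(v_n)\to 0=Q(v)$ with $v_n\rightharpoonup v$; the defining property of a Legendre form then yields $v_n\to v=0$ \emph{strongly}, contradicting $\|v_n\|_{L^2(\Omega)}=1$. Either way we reach a contradiction, establishing the uniform neighborhood condition and hence (i) and (ii).

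The main obstacle, and the place to be careful, is the interchange of limits in the second step: I must ensure that the difference $\mJ''_{uu}(u_n,e_n)v_n^2-\mJ''_{uu}(\ou,\be)v_n^2$ tends to zero uniformly over the unit sphere so that it does not corrupt the limit. This follows from Lemma~\ref{LmJCntnous}, which gives operator-norm convergence $\|\mJ''_{uu}(u_n,e_n)-\mJ''_{uu}(\ou,\be)\|\to0$, combined with $\|v_n\|_{L^2(\Omega)}=1$; so the bilinear-form difference is bounded by the operator norm and vanishes. With that estimate in hand, the Legendre-form dichotomy (strong convergence versus strict positivity on the limiting critical cone) closes both branches, which is precisely why the pointwise condition at a single point suffices and why the Lipschitzian and Hölderian notions coincide here.
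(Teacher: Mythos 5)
Your reduction of the theorem to the equivalence ``neighborhood condition of Theorem~\ref{ThmCFLPPbm}(iii) $\Longleftrightarrow$ \eqref{ExCdChrFLp}'' is the right strategy, and your treatment of the implication \eqref{ExCdChrFLp} $\Rightarrow$ neighborhood condition is correct and essentially identical to the paper's own argument: the same contradiction setup with unit vectors $v_n\in C_0(u_n,u^*_n)$, the weak limit located via Lemma~\ref{LemOlmWSST}, the operator-norm continuity of Lemma~\ref{LmJCntnous} to trade $\mJ''_{uu}(u_n,e_n)$ for $\mJ''_{uu}(\ou,\be)$, and the Legendre-form dichotomy of Lemma~\ref{LmQLgndrFm} to rule out both $v\neq0$ and $v=0$.

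The genuine gap is in the direction you dismiss as ``essentially trivial.'' Specializing the neighborhood condition to $(u,u^*)=(\ou,\hu^*)$ and $e=\be$ yields positive definiteness of $\mJ''_{uu}(\ou,\be)$ only on the critical cone $C_0(\ou,\hu^*)=T_{\mU_{ad}}(\ou)\cap\{\hu^*\}^\bot$, whereas \eqref{ExCdChrFLp} demands it on the larger set $\{v\in L^2(\Omega)\st v(x)\hu^*(x)=0~\mbox{a.e.}\}$. Indeed, by \eqref{RepreTUad} the critical cone carries the sign constraints $v\geq0$ on $\{\ou=\alpha\}$ and $v\leq0$ on $\{\ou=\beta\}$, so on the portions of the active sets where $\hu^*$ vanishes (absence of strict complementarity) the set in \eqref{ExCdChrFLp} contains vectors of arbitrary sign that do not belong to $C_0(\ou,\hu^*)$. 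The inclusion you invoke, $C_0(\ou,\hu^*)\subset\{v\st v\hu^*=0~\mbox{a.e.}\}$, therefore points the wrong way: it shows that your specialized statement is \emph{weaker} than (iii), not that it implies (iii). The paper closes exactly this gap with the other half of Lemma~\ref{LemOlmWSST}: the \emph{strong} sequential outer limit satisfies $\mC^0_s(\ou,\hu^*)=\{v\st v\hu^*=0~\mbox{a.e.}\}$, so any $v\neq0$ with $v\hu^*=0$ a.e. is a strong limit of vectors $v_n\in C_0(u_n,u^*_n)$ along a sequence $(u_n,u^*_n)\to(\ou,\hu^*)$ in $\gph\mN_0$; applying \eqref{CondChrFLp} at these \emph{nearby} points with the uniform constant $\delta$ and passing to the limit (using Lemma~\ref{LmJCntnous} and $v_n\to v$ strongly) gives $\mJ''_{uu}(\ou,\be)v^2\geq\delta\|v\|^2_{L^2(\Omega)}>0$. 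Without this approximation step your forward implication is valid only under strict complementarity, so the proposal as written does not prove the theorem.
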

\begin{proof}
According to \eqref{FDeriCost}, we have
\begin{equation}\label{DeriMathJ}
\begin{aligned}
    \mJ'_u(\ou,\be)v
    &=J'(\ou+\be_y)v+\big(\be_J,G'(\ou+\be_y)v\big)_{L^2(\Omega)}\\
    &=\int_\Omega\big(\zeta(\ou+\be_y)+\varphi_{\ou+\be_y}\big)vdx+\int_\Omega G'(\ou+\be_y)^*\be_Jvdx\\
    &=\int_\Omega\big(\zeta(\ou+\be_y)+\varphi_{\ou+\be_y}+G'(\ou+\be_y)^*\be_J\big)vdx.
\end{aligned}
\end{equation}
From \eqref{DeriMathJ} it follows that
$$\mJ'_u(\ou,\be)
  =\zeta(\ou+\be_y)+\varphi_{\ou+\be_y}+G'(\ou+\be_y)^*\be_J\in L^2(\Omega).$$
Thus, we obtain the inclusions $\ou^*\in\mJ'_u(\ou,\be)+\mN_0(\ou)$
and $\hu^*\in\mN_0(\ou)$.

By Theorem~\ref{ThmCFLPPbm}, (i) is equivalent to (ii).

We now verify that (i) is equivalent to (iii). Assume that the
control $\ou$ is a Lipschitzian fully stable local minimizer for the
problem $\mP_0(\ou^*,\be)$. By Theorem~\ref{ThmCFLPPbm}, there exist
$\eta>0$ and $\delta>0$ such that for each
$(u,u^*)\in\gph\mN_0\cap\oB_\eta(\ou,\hu^*)$ and
$e\in\oB_\eta(\be)$, we have \eqref{CondChrFLp}. Fix any $v\in
L^2(\Omega)$ with $v\neq0$ and $v(x)\hu^*(x)=0$ for a.e.
$x\in\Omega$. It follows that $v\in\mC^0_s(\ou,\hu^*)$ due to
\eqref{CompWSSTc}. From \eqref{OlmSTtopo} one can find sequences
$(u_n,u^*_n)\to(\ou,\hu^*)$ with $(u_n,u^*_n)\in\gph\mN_0$, $v_n\in
C_0(u_n,u^*_n)$ such that $v_n\to v$ as $n\to\infty$. Since $v_n\in
C_0(u_n,u^*_n)$, from \eqref{CondChrFLp} we deduce that
\begin{equation}\label{SqnCdChrFLp}
    \mJ''_{uu}(u_n,e_n)v^2_n\geq\delta\|v_n\|^2_{L^2(\Omega)},\ \forall n\in\N,
\end{equation}
where $e_n=\be$ for every $n\in\N$. By passing \eqref{SqnCdChrFLp}
to the limit as $n\to\infty$, we get \eqref{ExCdChrFLp}.

Conversely, suppose that \eqref{ExCdChrFLp} holds. In order to apply
Theorem~\ref{ThmCFLPPbm} to deduce that $\ou$ is a Lipschitzian
fully stable local minimizer for the problem $\mP_0(\ou^*,\be)$, we
have to prove that \eqref{CondChrFLp} holds. Suppose to the contrary
that \eqref{CondChrFLp} does not hold. Then, one can find sequences
$(u_n,u^*_n)\to(\ou,\hu^*)$ with $(u_n,u^*_n)\in\gph\mN_0$,
$e_n\to\be$, and $v_n\in C_0(u_n,u^*_n)$ such that
\begin{equation}\label{CtrryCond}
    \mJ''_{uu}(u_n,e_n)v^2_n<\frac{1}{n}\|v_n\|^2_{L^2(\Omega)},\ \forall n\in\N,
\end{equation}
where $e_n=(e_{yn},e_{Jn},0,0)$ for every $n\in\N$. We may assume
that $\|v_n\|_{L^2(\Omega)}=1$ for every $n\in\N$. Without loss of
generality we may assume that $v_n\rightharpoonup v$ weakly (and
also weakly star) in $L^2(\Omega)$. From \eqref{OlmWStopo} we deduce
that $v\in\mC^0_{w^*}(\ou,\hu^*)$, which implies $v(x)\hu^*(x)=0$
for a.e. $x\in\Omega$ due to Lemma~\ref{LemOlmWSST}. Now, on one
hand, we have
\begin{equation}\label{DermJVn}
\begin{aligned}
    \mJ''_{uu}(u_n,e_n)v^2_n
    &=J''(u_n+e_{yn})v^2_n+\big(e_{Jn},G''(u_n+e_{yn})v^2_n\big)_{L^2(\Omega)}\\
    &=\int_\Omega\bigg(\dfrac{\partial^2L}{\partial y^2}(x,y_{u_n+e_{yn}})-\varphi_{u_n+e_{yn}}
      \dfrac{\partial^2f}{\partial y^2}(x,y_{u_n+e_{yn}})\bigg)z^2_{{u_n+e_{yn}},v_n}dx\\
    &\quad +\int_\Omega\zeta v^2_ndx+\big(e_{Jn},G''(u_n+e_{yn})v^2_n\big)_{L^2(\Omega)}.
\end{aligned}
\end{equation}
By our assumptions posed on the functions $L$ and $f$, since
$R_2(h)=\int_\Omega\zeta h^2dx$ is sequentially weakly lover
semicontinuous and $G(\cdot)$ is of class $\mC^2$, from
\eqref{DermJVn} we deduce that
$$\begin{aligned}
    \liminf_{n\to\infty}\mJ''_{uu}(u_n,e_n)v^2_n
    &\geq\int_\Omega\bigg(\dfrac{\partial^2L}{\partial y^2}(x,y_{\ou+\be_y})-\varphi_{\ou+\be_y}
      \dfrac{\partial^2f}{\partial y^2}(x,y_{\ou+\be_y})\bigg)z^2_{{\ou+\be_y},v}dx\\
    &\quad +\int_\Omega\zeta v^2dx+\big(\be_J,G''(\ou+\be_y)v^2\big)_{L^2(\Omega)}\\
    &=\mJ''_{uu}(\ou,\be)v^2.
\end{aligned}$$
From this and \eqref{CtrryCond} we get
$\mJ''_{uu}(\ou,\be)v^2\leq0$. This yields $v=0$ due to
\eqref{ExCdChrFLp}. It follows that
\begin{equation}\label{EqLimitJ}
    \lim_{n\to\infty}\mJ''_{uu}(u_n,e_n)v^2_n=\mJ''_{uu}(\ou,\be)v^2=0.
\end{equation}
On the other hand, we have
\begin{equation}\label{SbtrScdDer}
\begin{aligned}
    &\mJ''_{uu}(u_n,e_n)v^2_n-\mJ''_{uu}(\ou,\be)v^2\\
    &\qquad =\mJ''_{uu}(u_n,e_n)v^2_n-\mJ''_{uu}(\ou,\be)v^2_n+\mJ''_{uu}(\ou,\be)v^2_n-\mJ''_{uu}(\ou,\be)v^2\\
    &\qquad =\big(\mJ''_{uu}(u_n,e_n)-\mJ''_{uu}(\ou,\be)\big)v^2_n+Q(v_n)-Q(v).
\end{aligned}
\end{equation}
where $Q$ is the Legendre form defined by \eqref{QdrtcFrmQ}. From
\eqref{EqLimitJ}, \eqref{SbtrScdDer} and by Lemma~\ref{LmJCntnous}
we infer that
$$Q(v_n)-Q(v)=\mJ''_{uu}(u_n,e_n)v^2_n-\mJ''_{uu}(\ou,\be)v^2
  -\big(\mJ''_{uu}(u_n,e_n)-\mJ''_{uu}(\ou,\be)\big)v^2_n\to0,$$
when $n\to\infty$. Since $Q$ is a Legendre form on $L^2(\Omega)$,
$v_n$ converges strongly to $v$ (with $v=0$) in $L^2(\Omega)$. We
arrive at a contradiction due to $\|v_n\|_{L^2(\Omega)}=1$ for every
$n\in\N$. $\hfill\Box$
\end{proof}

\begin{Remark}\label{RmkFxCSwMT}\rm
In \cite{MalTro00CC}, it is proved that a \emph{coercivity
condition} (called \textbf{(AC)} in \cite{MalTro00CC}) equivalent to
\eqref{ExCdChrFLp} is sufficient for the local Lipschitzian
stability of solutions to parametric optimal control problems for
nonlinear systems. In addition, this condition was proved to be
necessary for local Lipschitzian stability of solutions with respect
to only tilt parameter. The results in \cite{MalTro00CC} are
established on the basis of Robinson's implicit function theorem
\cite{Robin80MOR} and its extension obtained in \cite{Dont95KAP}.
See also \cite{MalTro99ZAA} for the local Lipschitzian stability of
solutions to parametric optimal control problems for parabolic
equations.
\end{Remark}

\section{Perturbed admissible control set and full stability}

In this section, we will establish explicit characterizations of the
full stability properties for problem~\eqref{FxturbPro} with the
corresponding perturbed problem \eqref{TprturbPro}, where the
perturbed admissible control set $\mU_{ad}(e)$ is given by
\eqref{PrAdCtrlSet} with respect to $e\in E=L^2(\Omega)\times
L^2(\Omega)\times L^\infty(\Omega)\times L^\infty(\Omega)$.

Let us consider the function $\psi:L^2(\Omega)\times E\to\R$ defined
by
\begin{equation}\label{FncPsi}
    \psi(u,e)=\mJ(u,e)+\delta(u;\mU_{ad}(e)),
\end{equation}
and the \emph{normal cone mapping} $\mN:L^2(\Omega)\times
E\rightrightarrows L^2(\Omega)$ given by
\begin{equation}\label{AdCtrNrMap}
    \mN(u,e)=N(u;\mU_{ad}(e)),
\end{equation}
for every $(u,e)\in L^2(\Omega)\times E$. For each $e\in E$, we put
$\mJ_e(u)=\mJ(u,e)$, $\delta_e(u)=\delta(u;\mU_{ad}(e))$,
$\mN_e(u)=\mN(u,e)$, and $\psi_e(u)=\mJ_e(u)+\delta_e(u)$ for all
$u\in L^2(\Omega)$.

For every $(u,e,u^*)$ with $u^*\in\mN(u,e)$, we define the critical
cone
\begin{equation}\label{AdCtrCrCone}
    C(u,e,u^*)=T_{\mU_{ad}(e)}(u)\cap\{u^*\}^\bot.
\end{equation}
For each $e\in E$, using \eqref{DomCmBnSbd} with $K=\mU_{ad}(e)$ we
obtain
$$\dom\breve{\partial}^2\delta_e(u,u^*)=\dom\breve{\partial}^2\delta_e(\cdot;\mU_{ad}(e))(u,u^*)
  \subset-\big(T_{\mU_{ad}(e)}(u)\cap\{u^*\}^\bot\big),$$
and thus combining this with \eqref{AdCtrCrCone} yields
\begin{equation}\label{AdCtrDmSbCn}
    \dom\breve{\partial}^2\delta_e(u,u^*)\subset-C(u,e,u^*).
\end{equation}
Moreover, since $\mU_{ad}(e)$ is polyhedric at every
$u\in\mU_{ad}(e)$ for any $u^*\in\mN(u,e)$, from \eqref{EqCmBnSbdf}
and \eqref{AdCtrCrCone} we deduce that
\begin{equation}\label{AdCtrEqCBcn}
    \breve{\partial}^2\delta_e(u,u^*)(v)=\breve{\partial}^2\delta(\cdot;\mU_{ad}(e))(u,u^*)(v)
    =\big(C(u,e,u^*)\big)^*,~\forall v\in-C(u,e,u^*).
\end{equation}

For every pair $(u,e)\in L^2(\Omega)\times E$ satisfying
$u\in\mU_{ad}(e)$, we define the three subsets of $\Omega$ as
follows
\begin{equation}\label{DjntSetsOm}
\begin{cases}
     \Omega_1(u,e)=\big\{x\in\Omega\st u(x)=\alpha(x)+e_\alpha(x)\big\},\\
     \Omega_2(u,e)=\big\{x\in\Omega\st u(x)\in\big(\alpha(x)+e_\alpha(x),\beta(x)+e_\beta(x)\big)\big\},\\
     \Omega_3(u,e)=\big\{x\in\Omega\st u(x)=\beta(x)+e_\beta(x)\big\}.
\end{cases}
\end{equation}
Then, $\mN(u,e)=N(u;\mU_{ad}(e))$ is computed by
\begin{equation}\label{VluesNrCne}
    \mN(u,e)=\left\{u^*\in L^2(\Omega)\left|
      \begin{aligned}
         &u^*(x)\leq0\quad\mbox{if}~x\in\Omega_1(u,e)\\
         &u^*(x)=0   \quad\mbox{if}~x\in\Omega_2(u,e)\\
         &u^*(x)\geq0\quad\mbox{if}~x\in\Omega_3(u,e)\\
         &\qquad for~a.e.~x\in\Omega
      \end{aligned}
\right.\right\}.
\end{equation}

Again, we obtain the equivalence of the conditions of the
abstract stability Theorems~\ref{ThmHdrCmBn} and \ref{ThmLpHdrEqv}.

\begin{Theorem}\label{ThmLpAdCtCh}
Assume that the assumptions {\rm\textbf{(A1)}-\textbf{(A3)}} hold.
Let $(\ou,\be)\in\dom\psi$ be such that there exists $\sigma>0$
satisfying
\begin{equation}\label{CndAdCtrSt}
    \alpha(x)+\be_\alpha(x)+\sigma\leq\beta(x)+\be_\beta(x)~for~a.e.~x\in\Omega,
\end{equation}
where $\be=(\be_y,\be_J,\be_\alpha,\be_\beta)$. Let any
$\ou^*\in\partial_u\psi(\ou,\be)$ be given. Then, the following
statements are equivalent:
\begin{itemize}
\item[{\rm(i)}] The control $\ou$ is a Lipschitzian fully stable local
minimizer of $\mP(\ou^*,\be)$ in \eqref{TprturbPro}.
\item[{\rm(ii)}] The control $\ou$ is a H\"{o}lderian fully stable local
minimizer of $\mP(\ou^*,\be)$ in \eqref{TprturbPro}.
\item[{\rm(iii)}] There are $\eta,\delta>0$ such that for all
$(u,e,u^*)\in\gph\partial_u\psi\cap\oB_\eta(\ou,\be,\ou^*)$ we have
\begin{equation}\label{HldrChrCB}
    \langle v^*,v\rangle\geq\delta\|v\|^2_{L^2(\Omega)}~\;\mbox{whenever}~\;
    v^*\in\breve{\partial}^2\psi_e(u,u^*)(v)~\mbox{for}~v\in L^2(\Omega).
\end{equation}
\end{itemize}
\end{Theorem}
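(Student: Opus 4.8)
The plan is to play the Lipschitzian characterization of Theorem~\ref{ThmLipChRg} against the H\"olderian one of Theorem~\ref{ThmHdrCmBn}, exactly as in the fixed-constraint Theorem~\ref{ThmLpHdrEqv}, and to absorb the new $e$-dependence of the set $\mU_{ad}(e)$ into a harmless affine reparametrization. Observe first that both abstract theorems require the same standing hypotheses (BCQ and parametric continuous prox-regularity of $\psi$ at $(\ou,\be)$ for $\ou^*$), that their second-order condition is literally \eqref{HldrChrCB}=(iii), and that the \emph{only} extra demand made by the Lipschitzian Theorem~\ref{ThmLipChRg} is that the graphical mapping $e\mapsto\gph\partial_u\psi(\cdot,e)$ be locally Lipschitz-like around $(\be,\ou,\ou^*)$. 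Since the implication (i)$\Rightarrow$(ii) is standard from the definitions (locally, $\|e-\we\|\le\|e-\we\|^{1/2}$), the theorem reduces to three verifications: the standing hypotheses hold; the coderivative condition \eqref{LipChrCB} coincides with \eqref{HldrChrCB}; and the graphical Lipschitz-like property holds \emph{unconditionally} here. Granting these, Theorem~\ref{ThmHdrCmBn} gives (ii)$\Leftrightarrow$(iii), Theorem~\ref{ThmLipChRg} gives (i)$\Leftrightarrow$[\,Lipschitz-like\,$\wedge$\,\eqref{LipChrCB}\,], and the chain closes as (i)$\Leftrightarrow$(iii)$\Leftrightarrow$(ii).

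The device that carries all of this is the scaling reparametrization of the moving box. For $e$ near $\be$ I set
$$c_e=\tfrac12(\alpha+e_\alpha+\beta+e_\beta),\qquad r_e=\tfrac12(\beta+e_\beta-\alpha-e_\alpha),$$
so that by \eqref{CndAdCtrSt} one has $r_e(x)\ge\sigma/4>0$ for a.e.\ $x\in\Omega$, and with the \emph{fixed} set $K=\{t\in L^2(\Omega)\st|t(x)|\le1~\mbox{a.e.}\}$ one gets $\mU_{ad}(e)=c_e+r_eK$. Because $u\mapsto c_e+r_eu$ is an affine bijection whose linear part $r_e$ is a positive self-adjoint multiplication operator, the normal cones transform as $N(u;\mU_{ad}(e))=r_e^{-1}N\big(r_e^{-1}(u-c_e);K\big)$. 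Hence $\gph\mN(\cdot,e)$ is the image of the fixed graph $\gph N(\cdot;K)$ under an affine map whose coefficients $c_e,r_e,r_e^{-1}$ depend on $e$ Lipschitz-continuously in the $L^\infty$-norm (the norm under which $e_\alpha,e_\beta$ are measured in $E$). This delivers the Lipschitz-like property of $e\mapsto\gph\mN(\cdot,e)$, and after adding the $\mC^1$ shift $\mJ'_u(u,e)$ the Lipschitz-like property of $e\mapsto\gph\partial_u\psi(\cdot,e)$ around $(\be,\ou,\ou^*)$. The same representation $\mU_{ad}(e)=c_e+r_eK$ simultaneously yields BCQ and the compatible parameterization for prox-regularity, the prox-regularity and subdifferential continuity themselves following from the convexity of each $\mU_{ad}(e)$ together with the $\mC^2$-smoothness of $\mJ$.

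It remains to identify \eqref{LipChrCB} with \eqref{HldrChrCB}. I would compute $\widehat{D}^*(\partial_u\psi)(u,e,u^*)(v)$ by the sum rule applied to $\partial_u\psi=\mJ'_u(\cdot,\cdot)+\mN(\cdot,\cdot)$: the smooth summand contributes $\big(\mJ''_{uu}(u,e)v,\mJ''_{ue}(u,e)v\big)$, while the moving-box summand, through the reparametrization above, leaks its $e$-variation \emph{only into the $e^*$-slot}, its $L^2$-slot reproducing exactly $\widehat{D}^*N(\cdot;\mU_{ad}(e))\big(u,u^*-\mJ'_u(u,e)\big)(v)$. Together these give, on projecting onto the $L^2$-component,
$$\big\{v^*\st(v^*,e^*)\in\widehat{D}^*(\partial_u\psi)(u,e,u^*)(v)~\mbox{for some}~e^*\big\}=\breve\partial^2\psi_e(u,u^*)(v),$$
so the inequality $\langle v^*,v\rangle\ge\delta\|v\|^2_{L^2(\Omega)}$ of \eqref{LipChrCB} holds for all admissible $(v^*,e^*)$ precisely when it holds for all $v^*\in\breve\partial^2\psi_e(u,u^*)(v)$, which is \eqref{HldrChrCB}.

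The main obstacle I expect is the second paragraph: establishing that $e\mapsto\gph\partial_u\psi(\cdot,e)$ is Lipschitz-like despite the genuinely two-sided, \emph{independent} perturbation of the bounds. A pure translation argument fails because the lower and upper bounds move separately, so the scaling operators $r_e,r_e^{-1}$ and the uniform nondegeneracy \eqref{CndAdCtrSt} are indispensable; this is exactly the structural feature that forces the two full-stability notions to coincide here, and explains why the equivalence can fail for other perturbed constraint sets. A secondary technical point is to justify rigorously that the $e$-variation of the moving box enters only the $e^*$-component of the regular coderivative, which is where the coderivative calculus—legitimate under the just-established Lipschitz-like property—must be invoked with care.
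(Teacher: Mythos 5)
Your proposal is correct, and in the parts that carry the real weight it coincides with the paper's proof: the paper, too, proceeds by verifying BCQ, parametric continuous prox-regularity, and the locally Lipschitz-like property of $e\mapsto\gph\partial_u\psi(\cdot,e)$, and its device for transporting points between the boxes $\mU_{ad}(e_1)$ and $\mU_{ad}(e_2)$ --- writing $u_1=\lambda(\alpha+e^1_\alpha)+(1-\lambda)(\beta+e^1_\beta)$ with $\lambda(x)\in[0,1]$ and setting $u_2=\lambda(\alpha+e^2_\alpha)+(1-\lambda)(\beta+e^2_\beta)$ --- is exactly your affine reparametrization $\mU_{ad}(e)=c_e+r_eK$ in different clothes (take $t=1-2\lambda$); in both versions condition \eqref{CndAdCtrSt} is what makes the parametrization well defined and Lipschitz in $e$, and in both versions the transport preserves the active sets, hence the normal cone, by \eqref{VluesNrCne}. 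Where you genuinely diverge is the endgame. The paper never identifies \eqref{LipChrCB} with \eqref{HldrChrCB} for the moving box: having checked the standing hypotheses and the graphical Lipschitz-like property, it closes the loop by invoking Theorem~\ref{ThmHdrCmBn} together with \cite[Corollary~4.8]{MorNgh14SIOPT}, which states that Lipschitzian full stability is equivalent to H\"{o}lderian full stability plus the Lipschitz-like property of the graphical mapping. You instead prove the identification directly, and your key claim --- that the $e$-variation of $\mN(\cdot,\cdot)$ leaks only into the $e^*$-slot while the $L^2$-slot reproduces $\widehat{D}^*N(\cdot;\mU_{ad}(e))\big(u,u^*-\mJ'_u(u,e)\big)(v)$ --- is true: it follows by pushing the cylinder $\gph N(\cdot;K)\times E$ through the map $\Phi(t,e,t^*)=(c_e+r_et,e,t^*)$, using the transformation rule $\widehat{N}(\Phi(z);\Phi(S))=(D\Phi(z)^*)^{-1}\widehat{N}(z;S)$ valid for $\mC^1$ changes of variables with $\mC^1$ inverses, and then applying the sum rule of \cite[Theorem~1.62]{Mor06Ba}; this extends to the moving set the computation that the paper carries out only for a fixed set in Theorem~\ref{ThmLpHdrEqv}. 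So your route is self-contained relative to Theorems~\ref{ThmLipChRg} and \ref{ThmHdrCmBn} at the price of this extra coderivative calculus, whereas the paper avoids the calculus at the price of importing Corollary~4.8. Two minor points: neither the sum rule nor the change-of-variables formula actually needs the Lipschitz-like property (they are exact identities for regular normals), so the caution in your last paragraph is misplaced, though harmless; and your BCQ and prox-regularity step is only asserted --- the paper establishes prox-regularity by a Taylor expansion of $\mJ$ with a uniform bound on $\mJ''_{uu}$ and BCQ by transporting epigraph points with the same device, both of which your representation supports but which a complete write-up must include.
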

\begin{proof}
For the function $\psi(u,e)$ in \eqref{FncPsi}, we have
$\partial_u\psi(u,e)=\mJ'_u(u,e)+\mN(u,e)$ whenever
$(u,e)\in\mU_{ad}(e)\times E$. Due to \eqref{CndAdCtrSt}, by
choosing $\delta\in(0,\sigma/2)$, we have $\mU_{ad}(e)\neq\emptyset$
for every $e\in\oB_\delta(\be)$. We first verify that the BCQ holds
at $(\ou,\be)\in\dom\psi$, i.e., the epigraphical mapping
$\mE(e):=\epi\psi(\cdot,e)$ is locally Lipschitz-like around the
point $(\be,\ou,\psi(\be,\ou))\in\gph\mE$. Select any
$e_1=(e^1_y,e^1_J,e^1_\alpha,e^1_\beta)\in\oB_\delta(\be)$ and
$e_2=(e^2_y,e^2_J,e^2_\alpha,e^2_\beta)\in\oB_\delta(\be)$ with
$e_1\neq e_2$ (because there is nothing to do if $e_1=e_2$). For any
$(u_1,\tau_1)\in \mE(e_1)\cap\oB_\delta\big(\ou,\psi(\ou,\be)\big)$,
we construct $u_2\in\mU_{ad}(e_2)$ as follows. Since
$u_1\in\mU_{ad}(e_1)$ with
$$\mU_{ad}(e_1)=\big\{u\in L^2(\Omega)\bst\alpha(x)+e^1_\alpha(x)
\leq u(x)\leq\beta(x)+e^1_\beta(x)~\mbox{for
a.e.}~x\in\Omega\big\},$$ we have
$u_1=\lambda(\alpha+e^1_\alpha)+(1-\lambda)(\beta+e^1_\beta)$ with
some $\lambda(x)\in[0,1]$ for a.e. $x\in\Omega$. By setting
\begin{equation}\label{DefUe2u2}
    u_2:=\lambda(\alpha+e^2_\alpha)+(1-\lambda)(\beta+e^2_\beta),
\end{equation}
we obtain $u_2\in\mU_{ad}(e_2)$ and
$$|u_1-u_2|\leq\lambda|e^1_\alpha-e^2_\alpha|+(1-\lambda)|e^1_\beta-e^2_\beta|
\leq|e^1_\alpha-e^2_\alpha|+|e^1_\beta-e^2_\beta|$$ for a.e.
$x\in\Omega$. This yields
$$\|u_1-u_2\|_{L^\infty(\Omega)}
  \leq\|e^1_\alpha-e^2_\alpha\|_{L^\infty(\Omega)}+\|e^1_\beta-e^2_\beta\|_{L^\infty(\Omega)}
  \leq\|e_1-e_2\|_E.$$
It follows that
$$\|u_1-u_2\|_{L^2(\Omega)}\leq|\Omega|^{1/2}\|e_1-e_2\|_E.$$
By choosing
$\tau_2=\tau_1+\ell_\psi\|(u_1,e_1)-(u_2,e_2)\|_{L^2(\Omega)\times
E}\geq\psi(u_2,e_2)$, where $\ell_\psi>0$ is a Lipschitz constant of
$\psi$ around $(\ou,\be)$, we have $(u_2,\tau_2)\in\mE(e_2)$ and
$$\begin{aligned}
    \|(u_1,\tau_1)-(u_2,\tau_2)\|_{L^2(\Omega)\times\R}
    &=\|u_1-u_2\|_{L^2(\Omega)}+|\tau_1-\tau_2|\\
    &=\|u_1-u_2\|_{L^2(\Omega)}+\ell_\psi\|(u_1,e_1)-(u_2,e_2)\|_{L^2(\Omega)\times E}\\
    &=(\ell_\psi+1)\|u_1-u_2\|_{L^2(\Omega)}+\ell_\psi\|e_1-e_2\|_E\\
    &\leq\big((\ell_\psi+1)|\Omega|^{1/2}+\ell_\psi\big)\|e_1-e_2\|_E.
\end{aligned}$$
Therefore, setting
$\ell_{\mE}=(\ell_\psi+1)|\Omega|^{1/2}+\ell_\psi>0$, we obtain
$$(u_1,\tau_1)\in(u_2,\tau_2)+\ell_{\mE}\|e_1-e_2\|_E\oB_{L^2(\Omega)\times\R}.$$
We have shown that
$$\mE(e_1)\cap V\subset\mE(e_2)+\ell_{\mE}\|e_1-e_2\|_E\oB_{L^2(\Omega)\times\R},~\forall e_1,e_2\in U,$$
where $U:=\oB_\delta(\be)$ and
$V:=\oB_\delta\big(\ou,\psi(\ou,\be)\big)$. Thus, $\mE$ is locally
Lipschitz-like around the point $(\be,\ou,\psi(\be,\ou))\in\gph\mE$.

We now check that $\psi$ is parametrically continuously prox-regular
at the point $(\ou,\be)$ for $\ou^*\in\partial_u\psi(\ou,\be)$. Let
$r>0$ satisfy $\|\mJ''_{uu}(u,e)\|_{L^2(\Omega)\times
L^2(\Omega)}\leq r$ for every
$(u,e)\in\mU_{ad}(e)\times\oB_\delta(\be)$. Fix an arbitrary
$\varepsilon>0$ and take any
$u^*\in\partial_u\psi(v,e)\cap\oB_\varepsilon(\ou)$ with
$(v,e)\in\oB_\varepsilon(\ou)\times\oB_\delta(\be)$ and
$\psi(v,e)\leq\psi(\ou,\be)+\varepsilon$. From this we have
$v\in\mU_{ad}(e)$. For every $u\in\oB_\varepsilon(\ou)$, if
$u\not\in\mU_{ad}(e)$, then we get $\psi(u,e)=+\infty$, and thus
\begin{equation}\label{InqCdPrRg}
    \psi(u,e)\geq\psi(v,e)+\langle u^*,u-v\rangle-\frac{r}{2}\|u-v\|^2_{L^2(\Omega)}.
\end{equation}
Consider the case that $u\in\mU_{ad}(e)$. Note that since
$u^*\in\partial_u\psi(v,e)$, we have $u^*=\mJ'_u(v,e)+u^*_v$ for
some $u^*_v\in\mN(v,e)$. Using a Taylor expansion, we have
\begin{equation}\label{IeqTylrPRg}
\begin{aligned}
    \mJ'_u(v,e)(u-v)
    &=\mJ(u,e)-\mJ(v,e)-\frac{1}{2}\mJ''_{uu}(\hu,e)(u-v)^2\\
    &\leq\mJ(u,e)-\mJ(v,e)+\frac{r}{2}\|u-v\|^2_{L^2(\Omega)},
\end{aligned}
\end{equation}
where $\hu=v+\theta(u-v)$ with $\theta\in(0,1)$. Since
$u^*=\mJ'_u(v,e)+u^*_v$ with $u^*_v\in\mN(v,e)$, from
\eqref{IeqTylrPRg} we get
$$\begin{aligned}
    \langle u^*,u-v\rangle\leq\mJ(u,e)-\mJ(v,e)+\frac{r}{2}\|u-v\|^2_{L^2(\Omega)}.
\end{aligned}$$
This yields that \eqref{InqCdPrRg} holds. In addition, the mapping
$(u,e,u^*)\mapsto\psi(u,e)$ is also continuous relative to
$\gph\partial_u\psi$ at $(\ou,\be,\ou^*)$. We have shown that $\psi$
is parametrically continuously prox-regular at $(\ou,\be)$ for
$\ou^*\in\partial_u\psi(\ou,\be)$.

Finally, we show that the graphical mapping
$F(e):=\gph\partial_u\psi(\cdot,e)$ is locally Lipschitz-like around
$(\be,\ou,\ou^*)\in\gph F$. For every
$e_1=(e^1_y,e^1_J,e^1_\alpha,e^1_\beta),e_2=(e^2_y,e^2_J,e^2_\alpha,e^2_\beta)\in\oB_\delta(\be)$,
pick any $(u_1,u^*_1)\in F(e_1)\cap\oB_\delta(\ou,\ou^*)$. Then, we
have $u^*_1=\mJ'_u(u_1,e_1)+\hu^*$ for some $\hu^*\in\mN(u_1,e_1)$
due to $\partial_u\psi(u,e)=\mJ'_u(u,e)+\mN(u,e)$. By constructing
$u_2$ the same as in \eqref{DefUe2u2}, we obtain
$u_2\in\mU_{ad}(e_2)$ and
\begin{equation}\label{EsU1U2L2}
    \|u_1-u_2\|_{L^2(\Omega)}\leq|\Omega|^{1/2}\|e_1-e_2\|_E.
\end{equation}
According to \eqref{DjntSetsOm}, from the definition of $u_2$ we get
$\Omega_i(u_1,e_1)=\Omega_i(u_2,e_2)$ for all $i=1,2,3$. By
\eqref{VluesNrCne}, it follows that $\mN(u_1,e_1)=\mN(u_2,e_2)$.
Thus, by setting
$$u^*_2:=\mJ'_u(u_2,e_2)+\hu^*\in\mJ'_u(u_2,e_2)+\mN(u_2,e_2)=\partial_u\psi(u_2,e_2),$$
we have $(u_2,u^*_2)\in F(e_2)$ and by \eqref{EsU1U2L2} we deduce
that
$$\begin{aligned}
    \|(u_1,u^*_1)-(u_2,u^*_2)\|_{L^2(\Omega)\times L^2(\Omega)}
    &=\|u_1-u_2\|_{L^2(\Omega)}+\|u^*_1-u^*_2\|_{L^2(\Omega)}\\
    &=\|u_1-u_2\|_{L^2(\Omega)}+\|\mJ'_u(u_1,e_1)-\mJ'_u(u_2,e_2)\|_{L^2(\Omega)}\\
    &\leq\|u_1-u_2\|_{L^2(\Omega)}+\ell_{\mJ'_u}\|(u_1,e_1)-(u_2,e_2)\|_{L^2(\Omega)\times E}\\
    &\leq(\ell_{\mJ'_u}+1)\|u_1-u_2\|_{L^2(\Omega)}+\ell_{\mJ'_u}\|e_1-e_2\|_E\\
    &\leq(\ell_{\mJ'_u}+1)|\Omega|^{1/2}\|e_1-e_2\|_E+\ell_{\mJ'_u}\|e_1-e_2\|_E\\
    &=\big((\ell_{\mJ'_u}+1)|\Omega|^{1/2}+\ell_{\mJ'_u}\big)\|e_1-e_2\|_E,
\end{aligned}$$
where $\ell_{\mJ'_u}>0$ is a Lipschitz constant of
$\mJ'_u(\cdot,\cdot)$ around the point $(\ou,\be)$. Therefore, by
setting $\ell_F:=(\ell_{\mJ'_u}+1)|\Omega|^{1/2}+\ell_{\mJ'_u}>0$,
we obtain
$$(u_1,u^*_1)\in(u_2,u^*_2)+\ell_F\|e_1-e_2\|_E\oB_{L^2(\Omega)\times L^2(\Omega)}.$$
This implies that
$$F(e_1)\cap V_0\subset F(e_2)+\ell_F\|e_1-e_2\|_E\oB_{L^2(\Omega)\times L^2(\Omega)},~\forall e_1,e_2\in U_0,$$
where $U_0:=\oB_\delta(\be)$ and $V_0:=\oB_\delta(\ou,\ou^*)$. This
means that $F$ is locally Lipschitz-like around the point
$(\be,\ou,\ou^*)\in\gph F$.

By Theorem~\ref{ThmHdrCmBn} and \cite[Corollary~4.8]{MorNgh14SIOPT},
the assertion of the theorem is fulfill. $\hfill\Box$
\end{proof}

\begin{Remark}\rm
Note that when the parametric space $E=L^2(\Omega)\times
L^2(\Omega)\times L^\infty(\Omega)\times L^\infty(\Omega)$ is
replaced with $\widetilde{E}=L^2(\Omega)\times L^2(\Omega)\times
L^2(\Omega)\times L^2(\Omega)$ in Theorem~\ref{ThmLpAdCtCh}, the
full stability properties cannot hold. Indeed, the set $\mU_{ad}(e)$
has no interior points in $L^2(\Omega)$, hence there are
perturbations $e$ arbitrarily close to $\be$ in $\widetilde{E}$ for
which such $\mU_{ad}(e)$ can be empty. This is the main reason why
the full stability properties fail to hold for this setting.
Therefore, we can investigate the full stability for
problem~\eqref{FxturbPro} in the corresponding perturbed
problem~\eqref{TprturbPro} with respect to the parametric metric
space
$\widetilde{E}_0=\{e\in\widetilde{E}\st\mU_{ad}(e)\neq\emptyset\}$
instead of $\widetilde{E}$. Note further that in our setting of
Theorem~\ref{ThmLpAdCtCh}, where the parametric space $E$ is
considered, condition~\eqref{CndAdCtrSt} ensures that
$\mU_{ad}(e)\neq\emptyset$ for $e$ near $\be$ enough.
\end{Remark}

\begin{Theorem}\label{ThmCFHldr}
Assume that the assumptions {\rm\textbf{(A1)}-\textbf{(A3)}} hold.
Let $(\ou,\be)\in\mU_{ad}(\be)\times E$ satisfy
condition~\eqref{CndAdCtrSt} for some $\sigma>0$, where
$\be=(\be_y,\be_J,\be_\alpha,\be_\beta)$. Let
$\ou^*\in\mJ'_u(\ou,\be)+\mN(\ou,\be)$ and define
$\hu^*=\ou^*-\mJ'_u(\ou,\be)\in\mN(\ou,\be)$. Then, the following
statements are equivalent:
\begin{itemize}
\item[{\rm(i)}] The control $\ou$ is a Lipschitzian fully stable local
minimizer of $\mP(\ou^*,\be)$ in \eqref{TprturbPro}.
\item[{\rm(ii)}] The control $\ou$ is a H\"{o}lderian fully stable
local minimizer of $\mP(\ou^*,\be)$ in \eqref{TprturbPro}.
\item[{\rm(iii)}] There exist $\eta>0$, $\delta>0$ such that for
each $(u,e,u^*)\in\gph\mN\cap\oB_\eta(\ou,\be,\hu^*)$, we have
\begin{equation}\label{CndChrHldr}
    \mJ''_{uu}(u,e)v^2\geq\delta\|v\|^2_{L^2(\Omega)},~\forall v\in C(u,e,u^*),
\end{equation}
where $C(u,e,u^*)$ is given by \eqref{AdCtrCrCone}.
\end{itemize}
\end{Theorem}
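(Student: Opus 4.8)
The plan is to derive Theorem~\ref{ThmCFHldr} from the abstract equivalence already proved in Theorem~\ref{ThmLpAdCtCh}, the only genuinely new work being to rewrite its second-order condition \eqref{HldrChrCB} as the critical-cone coercivity \eqref{CndChrHldr}. Since $\ou\in\mU_{ad}(\be)$ gives $(\ou,\be)\in\dom\psi$, condition \eqref{CndAdCtrSt} is assumed, and $\ou^*\in\mJ'_u(\ou,\be)+\mN(\ou,\be)=\partial_u\psi(\ou,\be)$, all hypotheses of Theorem~\ref{ThmLpAdCtCh} are met. That theorem already yields the equivalence of (i) and (ii) and shows that each is equivalent to the validity of \eqref{HldrChrCB} over all $(u,e,u^*)\in\gph\partial_u\psi\cap\oB_\eta(\ou,\be,\ou^*)$. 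Hence it suffices to prove that this last condition is equivalent to \eqref{CndChrHldr}.

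First I would reparametrize the graph. Because $\partial_u\psi(u,e)=\mJ'_u(u,e)+\mN(u,e)$ with $\mJ'_u(\cdot,\cdot)$ Lipschitz continuous, the map $(u,e,u^*)\mapsto(u,e,u^*-\mJ'_u(u,e))$ is a homeomorphism carrying $\gph\partial_u\psi$ onto $\gph\mN$ and $\ou^*$ onto $\hu^*$; after shrinking $\eta$, the balls $\oB_\eta(\ou,\be,\ou^*)$ and $\oB_\eta(\ou,\be,\hu^*)$ correspond. Writing $\hu^*:=u^*-\mJ'_u(u,e)\in\mN(u,e)$ and using that $\mJ_e$ is of class $\mC^2$, the sum rule \cite[Theorem~1.62]{Mor06Ba} (applied exactly as in the proof of Theorem~\ref{ThmLpHdrEqv}) splits the combined second-order subdifferential as
\[
    \breve{\partial}^2\psi_e(u,u^*)(v)=\mJ''_{uu}(u,e)v+\breve{\partial}^2\delta_e(u,\hu^*)(v).
\]

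Next I would insert the polyhedric computation for the perturbed set. By \eqref{AdCtrDmSbCn} the domain of $\breve{\partial}^2\delta_e(u,\hu^*)$ is $-C(u,e,\hu^*)$, and by \eqref{AdCtrEqCBcn} its value on that set is $(C(u,e,\hu^*))^*$. Thus for $v\in-C(u,e,\hu^*)$ every $v^*\in\breve{\partial}^2\psi_e(u,u^*)(v)$ has the form $v^*=\mJ''_{uu}(u,e)v+w^*$ with $w^*\in(C(u,e,\hu^*))^*$, so that $\langle v^*,v\rangle=\mJ''_{uu}(u,e)v^2+\langle w^*,v\rangle$, while for $v\notin-C(u,e,\hu^*)$ the image is empty and \eqref{HldrChrCB} is vacuous. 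Since $-v\in C(u,e,\hu^*)$ and $w^*$ runs over the dual cone of $C(u,e,\hu^*)$ from \eqref{AdCtrEqCBcn}, one has $\langle w^*,v\rangle\geq0$, with value $0$ at $w^*=0$; hence the worst case of \eqref{HldrChrCB} is attained at $w^*=0$, and \eqref{HldrChrCB} reduces to $\mJ''_{uu}(u,e)v^2\geq\delta\|v\|^2_{L^2(\Omega)}$ for all $v\in-C(u,e,\hu^*)$. Finally, $v\mapsto\mJ''_{uu}(u,e)v^2$ is even and $\|\cdot\|_{L^2(\Omega)}$ is symmetric, so quantifying over $-C(u,e,\hu^*)$ is the same as quantifying over $C(u,e,\hu^*)$, which is precisely \eqref{CndChrHldr}; Theorem~\ref{ThmLpAdCtCh} then closes the chain of equivalences.

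I expect the delicate step to be this dual-cone reduction: \eqref{HldrChrCB} must hold for every element $v^*$ of the translated cone $\mJ''_{uu}(u,e)v+(C(u,e,\hu^*))^*$, and one must argue that the minimizing choice is the single point $w^*=0$, which rests on the sign of $\langle w^*,v\rangle$ dictated by the convention in \eqref{AdCtrEqCBcn} together with the domain restriction $v\in-C(u,e,\hu^*)$. Coupling that restriction with the evenness of the quadratic form, so that the final coercivity is stated on $C(u,e,\hu^*)$ rather than on its negative, is the other place where care is needed; everything else is the bookkeeping of neighborhoods handled by the reparametrization.
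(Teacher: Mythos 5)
Your proposal is correct and follows essentially the same route as the paper: reduce everything to Theorem~\ref{ThmLpAdCtCh}, then convert its condition \eqref{HldrChrCB} into the critical-cone coercivity \eqref{CndChrHldr} by means of the sum rule \cite[Theorem~1.62]{Mor06Ba} and the polyhedricity formulas \eqref{AdCtrDmSbCn}--\eqref{AdCtrEqCBcn}, handling the shift $u^*\mapsto u^*-\mJ'_u(u,e)$ between $\gph\partial_u\psi$ and $\gph\mN$ via Lipschitz continuity of $\mJ'_u$ (the paper does this bookkeeping with explicit constants, $\eta_1/3\geq\eta(\ell+1)$, exactly as you sketch). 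The one point where you genuinely diverge is the inequality $\langle w^*,v\rangle\geq0$ for $w^*\in\breve{\partial}^2\delta_e(u,\hu^*)(v)$ with $v\in-C(u,e,\hu^*)$: the paper obtains it from maximal monotonicity of $\mN_e$ via \cite[Lemma~A.2]{MorNgh13NA}, whereas you read it off directly from \eqref{AdCtrEqCBcn}, using that $(C(u,e,\hu^*))^*$ is the polar (nonpositive-pairing) cone so that its pairing with $-C(u,e,\hu^*)$ is nonnegative. This lets you treat both implications symmetrically through a single exact description of $\breve{\partial}^2\psi_e$, with the worst case attained at $w^*=0$, and it spares the monotonicity lemma; the price is that the argument hinges on the sign convention in \eqref{AdCtrEqCBcn}, but that convention is indeed the polar one (it is what makes \eqref{EqCmBnSbdf} true, and it is also what the paper's own forward step tacitly uses when it notes $u^*\in C(u,e,u^*)^*$), so your step is sound. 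In contrast, the paper's two directions are asymmetric: the forward one produces the specific element $u^*-(\mJ_e)''(u)v\in\breve{\partial}^2\psi_e(u,(\mJ_e)'(u)+u^*)(-v)$, while the converse uses monotonicity; your version is a mild streamlining of the same proof rather than a different one.
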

\begin{proof}
We observe that our assumptions satisfy all the assumptions of
Theorem~\ref{ThmLpAdCtCh}, hence by Theorem~\ref{ThmLpAdCtCh} we
deduce that (i) is equivalent to (ii).

We now verify that (ii) is equivalent to (iii). Suppose that the
control $\ou$ is a H\"{o}lderian fully stable local minimizer of the
problem $\mP(\ou^*,\be)$ in \eqref{TprturbPro}. According to
Theorem~\ref{ThmLpAdCtCh}, condition~\eqref{HldrChrCB} holds for
some $\eta_1,\delta_1>0$ and for all
$(u,e,u^*)\in\gph\partial_u\psi\cap\oB_{\eta_1}(\ou,\be,\ou^*)$.

Let us select any $v\in C(u,e,u^*)$ for
$(u,e,u^*)\in\gph\mN\cap\oB_\eta(\ou,\be,\hu^*)$ with some constant
$\eta>0$ satisfying $\eta_1/3\geq\eta(\ell+1)>0$, where $\ell>0$ is
a Lipschitz constant of $\mJ'_u(\cdot,\cdot)$ around the point
$(\ou,\be)$. Since
$\partial_u\psi(u,e)=\mJ'_u(u,e)+\mN(u,e)=(\mJ_e)'(u)+\mN_e(u)=\partial\psi_e(u)$,
we have $(\mJ_e)'(u)+u^*=\mJ'_u(u,e)+u^*\in\partial\psi_u(u,e)$. In
addition, we have
$$\begin{aligned}
   &\big\|\big(u,e,(\mJ_e)'(u)+u^*\big)-\big(\ou,\be,\ou^*\big)\big\|_{L^2(\Omega)\times E\times L^2(\Omega)}\\
   &\qquad =\|u-\ou\|_{L^2(\Omega)}+\|e-\be\|_E+\|(\mJ_e)'(u)+u^*-\ou^*\|_{L^2(\Omega)}\\
   &\qquad\leq\eta_1/3+\eta_1/3+\|(\mJ_e)'(u)-(\mJ_e)'(\ou)\|_{L^2(\Omega)}
          +\|(\mJ_e)'(\ou)+u^*-\ou^*\|_{L^2(\Omega)}\\
   &\qquad=2\eta_1/3+\|(\mJ_e)'(u)-(\mJ_e)'(\ou)\|_{L^2(\Omega)}+\|u^*-\hu^*\|_{L^2(\Omega)}\\
   &\qquad\leq2\eta_1/3+\ell\eta+\eta\leq\eta_1.
\end{aligned}$$
This implies that
$\big(u,e,(\mJ_e)'(u)+u^*\big)\in\gph\partial_u\psi\cap\oB_{\eta_1}(\ou,\be,\ou^*)$.
It follows from \eqref{AdCtrEqCBcn} with noting $u^*\in
C(u,e,u^*)^*$ that
\begin{equation}\label{IneqSOdrDer}
\begin{aligned}
    -(\mJ_e)''(u)v+u^*
    &\in(\mJ_e)''(u)(-v)+\breve{\partial}^2\delta_e(u,u^*)(-v)\\
    &=\big((\mJ_e)''(u)+\breve{\partial}^2\delta_e(u,u^*)\big)(-v)\\
    &=\big((\mJ_e)''(u)+\widehat{D}^*\mN_e(u,u^*)\big)(-v).\\
\end{aligned}
\end{equation}
According to \cite[Theorem~1.62]{Mor06Ba}, we have
\begin{equation}\label{SmRleDerv}
\begin{aligned}
    \breve{\partial}^2\psi_e\big(u,(\mJ_e)'(u)+u^*\big)(-v)
    &=\widehat{D}^*(\partial\psi_e)\big(u,(\mJ_e)'(u)+u^*\big)(-v)\\
    &=\widehat{D}^*\big((\mJ_e)'+\mN_e\big)\big(u,(\mJ_e)'(u)+u^*\big)(-v)\\
    &=\big((\mJ_e)''(u)+\widehat{D}^*\mN_e(u,u^*)\big)(-v).
\end{aligned}
\end{equation}
From \eqref{IneqSOdrDer} and \eqref{SmRleDerv} we get
\begin{equation}\label{InclHdrCmB}
    u^*-(\mJ_e)''(u)v\in\breve{\partial}^2\psi_e\big(u,(\mJ_e)'(u)+u^*\big)(-v).
\end{equation}
Since $v\in C(u,e,u^*)$, using condition~\eqref{HldrChrCB} it
follows from \eqref{InclHdrCmB} that
$$(\mJ_e)''(u)v^2=\langle u^*-(\mJ_e)''(u)v,-v\rangle\geq\delta_1\|-v\|^2_{L^2(\Omega)},$$
or, equivalently, as follows
$$\mJ''_{uu}(u,e)v^2\geq\delta_1\|v\|^2_{L^2(\Omega)}.$$
By choosing $\delta=\delta_1$, we obtain \eqref{CndChrHldr} in
(iii).

Conversely, suppose that (iii) holds. To prove (ii) it suffices to
verify that Theorem~\ref{ThmLpAdCtCh}(iii) holds for some
$\eta_1,\delta_1>0$. Given an arbitrary $v\in L^2(\Omega)$, pick any
$v^*\in\breve{\partial}^2\psi_e(u,u^*)(v)$ with
$(u,e,u^*)\in\gph\partial_u\psi\cap\oB_{\eta_1}(\ou,\be,\ou^*)$ with
$\eta/3\geq\eta_1(1+\ell)>0$. According to
\cite[Theorem~1.62]{Mor06Ba}, we have
$$\begin{aligned}
    \breve{\partial}^2\psi_e(u,u^*)(v)
    &=\widehat{D}^*(\partial\psi_e)(u,u^*)(v)\\
    &=\widehat{D}^*\big((\mJ_e)'+\mN_e\big)(u,u^*)(v)\\
    &=(\mJ_e)''(u)v+\breve{\partial}^2\delta_e\big(u,u^*-(\mJ_e)'(u)\big)(v).
\end{aligned}$$
This yields
$v^*-(\mJ_e)''(u)v\in\breve{\partial}^2\delta_e\big(u,u^*-(\mJ_e)'(u)\big)(v)$.
From \eqref{AdCtrDmSbCn} we deduce that
\begin{equation}\label{DKvInCone}
    v\in\dom\breve{\partial}^2\delta_e\big(u,u^*-(\mJ_e)'(u)\big)
    \subset-C\big(u,e,u^*-(\mJ_e)'(u)\big).
\end{equation}
Since
$v^*-(\mJ_e)''(u)v\in\breve{\partial}^2\delta_e\big(u,u^*-(\mJ_e)'(u)\big)(v)
=\widehat{D}^*\mN_e\big(u,u^*-(\mJ_e)'(u)\big)(v)$ and $\mN_e$ is a
maximal monotone operator, applying \cite[Lemma~A.2]{MorNgh13NA} we
get $\langle v^*-(\mJ_e)''(u)v,v\rangle\geq0$. We see that
$$\begin{aligned}
    &\big\|\big(u,e,u^*-(\mJ_e)'(u)\big)-(\ou,\be,\hu^*)\big\|_{L^2(\Omega)\times L^2(\Omega)}\\
    &\qquad=\|u-\ou\|_{L^2(\Omega)}+\|e-\be\|_E+\|u^*-(\mJ_e)'(u)-\hu^*\|_{L^2(\Omega)}\\
    &\qquad\leq\eta/3+\eta/3+\|u^*-\ou^*\|_{L^2(\Omega)}+\|(\mJ_e)'(u)-(\mJ_e)'(\ou)\|_{L^2(\Omega)}\\
    &\qquad\leq2\eta/3+\eta_1+\ell\eta_1\leq\eta.
\end{aligned}$$
Combining the latter with \eqref{DKvInCone} and using
condition~\eqref{CndChrHldr} in (iii) of the theorem we deduce that
$$(\mJ_e)''(u)v^2=\mJ''_{uu}(u,e)(-v,-v)\geq\delta\|-v\|^2_{L^2(\Omega)}=\delta\|v\|^2_{L^2(\Omega)}.$$
From this and the fact that $\langle
v^*-(\mJ_e)''(u)v,v\rangle\geq0$ we obtain the estimate
$$\langle v^*,v\rangle=\langle v^*-(\mJ_e)''(u)v,v\rangle+(\mJ_e)''(u)v^2
  \geq\delta_1\|v\|^2_{L^2(\Omega)},$$
where $\delta_1:=\delta$. We have shown that
Theorem~\ref{ThmLpAdCtCh}(iii) holds for the positive
$\eta_1,\delta_1$. $\hfill\Box$
\end{proof}

\medskip
In the sequel, we will equivalently characterize condition
\eqref{CndChrHldr} by an assumption on the reference point
$(\ou,\be)$ only.

We now define the sequential outer limits of the critical cones
$C(u,e,u^*)$ respectively via \eqref{OtrLimit} in the weak* topology
of $L^2(\Omega)$ by
\begin{equation}\label{LmWStopoAd}
    \mC_{w^*}(\ou,\be,\hu^*)=\Limsup_{(u,e,u^*)\stackrel{\gph\mN}\longrightarrow(\ou,\be,\hu^*)}C(u,e,u^*),
\end{equation}
and in the strong topology of $L^2(\Omega)$ as follows
\begin{equation}\label{LmSTtopoAd}
    \mC_s(\ou,\be,\hu^*)=\Big\{v\in L^2(\Omega)\Bst\exists(u_n,e_n,u^*_n)
    \stackrel{\gph\mN}\longrightarrow(\ou,\be,\hu^*),v_n\in C(u_n,e_n,u^*_n),v_n\to v\Big\}.
\end{equation}

\begin{Lemma}\label{LemOlmWS4}
Let any $(\ou,\be)\in\mU_{ad}(\be)\times E$ satisfy
\eqref{CndAdCtrSt} for some $\sigma>0$ and let
$\hu^*\in\mN(\ou,\be)$, where
$\be=(\be_y,\be_J,\be_\alpha,\be_\beta)$. Then,
$\mC_{w^*}(\ou,\be,\hu^*)$ and $\mC_s(\ou,\be,\hu^*)$ are computed
by the formula
\begin{equation}\label{CmpWSSTcAd}
    \mC_{w^*}(\ou,\be,\hu^*)=\mC_s(\ou,\be,\hu^*)
    =\big\{v\in L^2(\Omega)\bst v(x)\hu^*(x)=0\ \mbox{for a.e.}\ x\in\Omega\big\},
\end{equation}
where $\mC_{w^*}(\ou,\be,\hu^*)$ and $\mC_s(\ou,\be,\hu^*)$ are
respectively given by \eqref{LmWStopoAd} and \eqref{LmSTtopoAd}.
\end{Lemma}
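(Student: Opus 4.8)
The plan is to prove \eqref{CmpWSSTcAd} via the chain of inclusions
$$\mC_s(\ou,\be,\hu^*)\subset\mC_{w^*}(\ou,\be,\hu^*)\subset\Lambda\subset\mC_s(\ou,\be,\hu^*),$$
where $\Lambda:=\big\{v\in L^2(\Omega)\bst v(x)\hu^*(x)=0\ \mbox{for a.e.}\ x\in\Omega\big\}$. The first inclusion is immediate, since strong convergence in $L^2(\Omega)$ implies weak$^*$ (=\,weak, by reflexivity) convergence, so only the second and third inclusions require work. Throughout I will use that, by \cite[Lemma~4.11]{BaBoSi14TAMS} applied to $\mU_{ad}(e)$, the tangent cone $T_{\mU_{ad}(e)}(u)$ has the representation analogous to \eqref{RepreTUad}, i.e. its elements are nonnegative on $\Omega_1(u,e)$ and nonpositive on $\Omega_3(u,e)$ in the notation of \eqref{DjntSetsOm}.

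For the inclusion $\mC_{w^*}(\ou,\be,\hu^*)\subset\Lambda$, take $v\in\mC_{w^*}(\ou,\be,\hu^*)$ and use \eqref{LmWStopoAd} to select $(u_n,e_n,u^*_n)\stackrel{\gph\mN}\longrightarrow(\ou,\be,\hu^*)$ together with $v_n\in C(u_n,e_n,u^*_n)$ and $v_n\rightharpoonup v$ in $L^2(\Omega)$. The decisive step is a pointwise complementarity: since $v_n\in T_{\mU_{ad}(e_n)}(u_n)$ obeys $v_n\geq0$ on $\Omega_1(u_n,e_n)$ and $v_n\leq0$ on $\Omega_3(u_n,e_n)$, while $u^*_n\in\mN(u_n,e_n)$ obeys the sign conditions in \eqref{VluesNrCne}, the product satisfies $v_n(x)u^*_n(x)\leq0$ for a.e. $x\in\Omega$; since moreover $v_n\in\{u^*_n\}^\bot$ by \eqref{AdCtrCrCone}, we get $\int_\Omega v_nu^*_n\,dx=0$, and hence $v_n(x)u^*_n(x)=0$ for a.e. $x\in\Omega$. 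Passing to a subsequence along which $u^*_n\to\hu^*$ a.e. (from strong $L^2$ convergence), on $\{\hu^*>0\}$ we have $u^*_n(x)>0$ for all large $n$, so the complementarity forces $v_n(x)=0$ there; symmetrically $v_n(x)=0$ eventually on $\{\hu^*<0\}$. Thus $v_n\to0$ a.e. on $\{\hu^*\neq0\}$, and since $\Omega$ has finite measure and $\{v_n\}$ is bounded in $L^2(\Omega)$, the weak limit of $v_n$ coincides with its a.e. limit on that set; therefore $v=0$ a.e. on $\{\hu^*\neq0\}$, that is $v\in\Lambda$.

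For the remaining inclusion $\Lambda\subset\mC_s(\ou,\be,\hu^*)$, it suffices to exhibit, for each $v\in\Lambda$, a single sequence realizing $v$ in \eqref{LmSTtopoAd}, and I will take the constant slice $e_n\equiv\be$. With $e_n=\be$ fixed, the outer limit over $\gph\mN$ collapses to the fixed-parameter strong outer limit of the critical cones $C(u,\be,u^*)$ for the single set $\mU_{ad}(\be)$ at $\ou$ for $\hu^*$, the analogue of \eqref{OlmSTtopo}. By \eqref{CndAdCtrSt} the perturbed bounds satisfy $\alpha+\be_\alpha<\beta+\be_\beta$ a.e. with uniform gap $\sigma$, so $\mU_{ad}(\be)$ is again of admissible-control type and is convex and polyhedric by Remark~\ref{RmkUadPoly}, with tangent cone given by the analogue of \eqref{RepreTUad}. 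Consequently the argument of Lemma~\ref{LemOlmWSST}, applied verbatim with $\mU_{ad}(\be)$ in place of $\mU_{ad}$, produces $(u_n,u^*_n)\to(\ou,\hu^*)$ with $u^*_n\in N(u_n;\mU_{ad}(\be))$ and $v_n\in C(u_n,\be,u^*_n)$ such that $v_n\to v$ strongly in $L^2(\Omega)$. Taking $e_n\equiv\be$ then shows $v\in\mC_s(\ou,\be,\hu^*)$, and the chain of inclusions closes.

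I expect the main obstacle to be the second inclusion, and within it the passage to the limit: reconciling the merely weak convergence $v_n\rightharpoonup v$ with the pointwise identity $v_nu^*_n=0$. The resolution is to upgrade the strong $L^2$ convergence of $u^*_n$ to a.e. convergence along a subsequence, to localize where $\hu^*$ is strictly positive or strictly negative, and then to invoke the coincidence of weak and almost-everywhere limits on the finite-measure set $\Omega$. I stress that the varying parameter $e_n$ creates no extra difficulty here, since the complementarity $v_nu^*_n=0$ and the sign conditions \eqref{VluesNrCne} are valid for each index $n$ separately and require no control of how the sets $\Omega_i(u_n,e_n)$ move with $n$.
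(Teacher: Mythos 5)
Your proposal is correct, and its skeleton coincides with the paper's: the chain of inclusions, the pointwise complementarity $v_n(x)u_n^*(x)=0$ a.e.\ for critical directions (the paper packages this as the representation \eqref{RpreCueuStr}), and polyhedricity for the hard inclusion. The differences are in execution, and both of your deviations are sound. For $\mC_{w^*}(\ou,\be,\hu^*)\subset\big\{v\st v\hu^*=0\ \mbox{a.e.}\big\}$ the paper passes to the limit by pairing: from $v_nu_n^*=0$ a.e.\ it gets $\int_\Theta v_nu_n^*\,dx=0$ for every measurable $\Theta$, and then uses weak convergence of $v_n$ against strong $L^2$ convergence of $\chi_\Theta u_n^*$ to conclude $\int_\Theta v\hu^*\,dx=0$; your route instead extracts an a.e.-convergent subsequence of $u_n^*$, forces $v_n\to0$ a.e.\ on $\{\hu^*\neq0\}$, and identifies the weak limit with the a.e.\ limit there, which additionally requires the standard fact that an $L^2$-bounded, a.e.-convergent sequence converges weakly to its pointwise limit. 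For the hard inclusion the paper does \emph{not} invoke Lemma~\ref{LemOlmWSST}; it reproduces the underlying construction directly for $\mU_{ad}(\be)$: it splits $v=v_2-v_1$ with $v_1,v_2\in C(\ou,\be,\hu^*)$, approximates both by radial directions via polyhedricity, and takes $u_n=\ou+t_nv_{1,n}$, $e_n\equiv\be$, $u_n^*\equiv\hu^*$, $v_n=v_{2,n}-v_{1,n}$ (checking along the way that $\hu^*$ stays normal at the shifted points $u_n$). Your reduction --- freeze $e_n\equiv\be$ and apply the analogue of Lemma~\ref{LemOlmWSST} to the set $\mU_{ad}(\be)$ --- reaches the same conclusion with less work, and it is legitimate because \eqref{CndAdCtrSt} gives $L^2$ bounds with a.e.\ strict inequality, so convexity, polyhedricity (Remark~\ref{RmkUadPoly}) and the tangent-cone formula \eqref{RpreTUadE} all persist for $\mU_{ad}(\be)$; note that the constant parameter slice is exactly the choice the paper itself makes. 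What the paper's self-contained version buys is an explicit realizing sequence (useful since Lemma~\ref{LemOlmWSST} itself is only proved by citation to \cite[Proposition~7.3]{MorNgh14SIOPT}); what yours buys is economy and the avoidance of duplicating that construction.
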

\begin{proof}
Under our assumptions, for each $(u,e)\in\mU_{ad}(e)\times E$ near
$(\ou,\be)$ enough, the perturbed admissible control set
$\mU_{ad}(e)$ is convex and polyhedric by Remark~\ref{RmkUadPoly}.
Moreover, due to \cite[Lemma~4.11]{BaBoSi14TAMS},
$T_{\mU_{ad}(e)}(u)$ is computed by
\begin{equation}\label{RpreTUadE}
    T_{\mU_{ad}(e)}(u)
    =\left\{v\in L^2(\Omega)\left|\,
    \begin{aligned}
        &v(x)\geq0~~\mbox{for}~x\in\Omega_1(u,e)\\
        &v(x)\leq0~~\mbox{for}~x\in\Omega_3(u,e)
    \end{aligned}\right.\right\},
\end{equation}
where $\Omega_1(u,e)$ and $\Omega_3(u,e)$ are defined by
\eqref{DjntSetsOm}. In addition, for any $u^*\in\mN(u,e)$, we have
\begin{equation}\label{RpreCueuStr}
\begin{aligned}
    C(u,e,u^*)
    &=T_{\mU_{ad}(e)}(u)\cap\{u^*\}^\bot\\
    &=\big\{v\in T_{\mU_{ad}(e)}(u)\bst v(x)u^*(x)=0~\mbox{for a.e.}~x\in\Omega\big\}.
\end{aligned}
\end{equation}
In order to prove \eqref{CmpWSSTcAd}, we first verify the following
inclusion
\begin{equation}\label{WSbWSSTcAd}
    \mC_{w^*}(\ou,\be,\hu^*)\subset\big\{v\in L^2(\Omega)\bst v(x)\hu^*(x)=0~\mbox{for a.e.}~x\in\Omega\big\}.
\end{equation}
Pick any $v\in\mC_{w^*}(\ou,\be,\hu^*)$. Due to \eqref{LmWStopoAd},
we can find sequences $(u_n,e_n,u^*_n)\to(\ou,\be,\hu^*)$ with
$(u_n,e_n,u^*_n)\in\gph\mN$ and $v_n\stackrel{w}\rightharpoonup v$
with $v_n\in C(u_n,e_n,u^*_n)$ for every $n\in\N$. This implies by
\eqref{RpreCueuStr} that $v_n\in T_{\mU_{ad}(e_n)}(u_n)$ and
$v_n(x)u^*_n(x)=0$ for a.e. $x\in\Omega$ for every $n\in\N$. This
yields for any measurable set $\Theta\subset\Omega$ that
$$\int_\Theta v(x)\hu^*(x)dx=0,$$
that is $v(x)\hu^*(x)=0$ for a.e. $x\in\Omega$. Hence,
\eqref{WSbWSSTcAd} has been verified.

Since $\mC_s(\ou,\be,\hu^*)\subset\mC_{w^*}(\ou,\be,\hu^*)$, to
obtain \eqref{CmpWSSTcAd} it suffices to prove that
\begin{equation}\label{SSpWSSTcAd}
    \big\{v\in L^2(\Omega)\bst v(x)\hu^*(x)=0~\mbox{for a.e.}~x\in\Omega\big\}\subset\mC_s(\ou,\be,\hu^*).
\end{equation}
Select any $v$ from the set on the left-hand side of
\eqref{SSpWSSTcAd}. We define functions $v_1$ and $v_2$ by
$$v_1(x)=\begin{cases}
    \max\{0,-v(x)\},  &\mbox{for}~x\in\Omega_1(\ou,\be)\\
    0,                &\mbox{for}~x\in\Omega_2(\ou,\be)\\
    \min\{0,-v(x)\},  &\mbox{for}~x\in\Omega_3(\ou,\be),\\
\end{cases}$$
and $v_2=v+v_1$. According to \eqref{RpreTUadE}, we have $v_1,v_2\in
T_{\mU_{ad}(\be)}(\ou)$. In addition, we also have
$v_1,v_2\in\{\hu^*\}^\bot$ due to $v(x)\hu^*(x)=0$ for a.e.
$x\in\Omega$. Thus, we get $v_1,v_2\in C(\ou,\be,\hu^*)$. Since
$\mU_{ad}(\be)$ is polyhedric, by \eqref{CndPlyhrc} we find
sequences $v_{1,n}\to v_1$, $v_{2,n}\to v_2$, $t_{1,n}\downarrow0$,
$t_{2,n}\downarrow0$ such that $\ou+t_{1,n}v_{1,n}\in\mU_{ad}(\be)$,
$\ou+t_{2,n}v_{2,n}\in\mU_{ad}(\be)$, and
$v_{1,n},v_{2,n}\in\{\hu^*\}^\bot$. Since $\mU_{ad}(\be)$ is convex,
by setting $t_n=\min\{t_{1,n},t_{2,n}\}$ we have
$$\begin{cases}
    u_n:=\ou+t_nv_{1,n}\in\mU_{ad}(\be)\\
    \wu_n:=\ou+t_nv_{2,n}\in\mU_{ad}(\be).
\end{cases}$$
Now, by choosing $e_n=\be$, $u^*_n=\hu^*$, and
$v_n=v_{2,n}-v_{1,n}$, we have $(u_n,e_n,u^*_n)\to(\ou,\be,\hu^*)$
with $(u_n,e_n,u^*_n)\in\gph\mN$ and $v_n\to v$ with
$$v_n=v_{2,n}-v_{1,n}=\frac{\wu_n-u_n}{t_n}\in\cone\big(\mU_{ad}(e_n)-u_n\big)\cap\{u^*_n\}^\bot
  \subset C(u_n,e_n,u^*_n).$$
This yields $v\in\mC_s(\ou,\be,\hu^*)$, and therefore
\eqref{SSpWSSTcAd} holds. $\hfill\Box$
\end{proof}

\medskip
From Theorem~\ref{ThmCFHldr} and Lemma~\ref{LemOlmWS4} we obtain the
following result.

\begin{Theorem}\label{ThmChLpHdAd}
Assume that the assumptions {\rm\textbf{(A1)}-\textbf{(A3)}} hold.
Let $(\ou,\be)\in\mU_{ad}(\be)\times E$ satisfy
condition~\eqref{CndAdCtrSt} for some $\sigma>0$, where
$\be=(\be_y,\be_J,\be_\alpha,\be_\beta)$. Let
$$\ou^*\in\zeta(\ou+\be_y)+\varphi_{\ou+\be_y}+G'(\ou+\be_y)^*\be_J+\mN(\ou,\be),$$
and define
$$\hu^*=\ou^*-\zeta(\ou+\be_y)-\varphi_{\ou+\be_y}-G'(\ou+\be_y)^*\be_J.$$
Then, the following statements are equivalent:
\begin{itemize}
\item[{\rm(i)}] The control $\ou$ is a Lipschitzian fully stable local
minimizer for $\mP(\ou^*,\be)$ in \eqref{TprturbPro}.
\item[{\rm(ii)}] The control $\ou$ is a H\"{o}lderian fully stable local
minimizer for $\mP(\ou^*,\be)$ in \eqref{TprturbPro}.
\item[{\rm(iii)}] The following condition holds that
\begin{equation}\label{CdChrFLpHdr}
    \mJ''_{uu}(\ou,\be)v^2>0,\ \forall v\neq0\ \mbox{with}\ v(x)\hu^*(x)=0~\mbox{for a.e.}~x\in\Omega,
\end{equation}
where $\be=(\be_y,\be_J,\be_\alpha,\be_\beta)$.
\end{itemize}
\end{Theorem}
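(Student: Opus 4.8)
The plan is to follow the template of the proof of Theorem~\ref{ThmChLpBrUE}, replacing the fixed-constraint machinery by its perturbed-constraint counterparts: Theorem~\ref{ThmCFHldr} in place of Theorem~\ref{ThmCFLPPbm}, and Lemma~\ref{LemOlmWS4} in place of Lemma~\ref{LemOlmWSST}. First I would recompute $\mJ'_u(\ou,\be)$ exactly as in \eqref{DeriMathJ}, obtaining $\mJ'_u(\ou,\be)=\zeta(\ou+\be_y)+\varphi_{\ou+\be_y}+G'(\ou+\be_y)^*\be_J\in L^2(\Omega)$, which confirms that the stated $\ou^*$ lies in $\mJ'_u(\ou,\be)+\mN(\ou,\be)$ and that $\hu^*\in\mN(\ou,\be)$, so the reference point is admissible for Theorem~\ref{ThmCFHldr}. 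The equivalence (i)$\Leftrightarrow$(ii) is then immediate from that theorem.

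For (i)$\Rightarrow$(iii), assuming Lipschitzian full stability, Theorem~\ref{ThmCFHldr} supplies $\eta,\delta>0$ with the uniform estimate $\mJ''_{uu}(u,e)v^2\geq\delta\|v\|^2_{L^2(\Omega)}$ for all $v\in C(u,e,u^*)$ whenever $(u,e,u^*)\in\gph\mN\cap\oB_\eta(\ou,\be,\hu^*)$. Given any $v\neq0$ with $v(x)\hu^*(x)=0$ a.e., Lemma~\ref{LemOlmWS4} places $v$ in $\mC_s(\ou,\be,\hu^*)$, so by \eqref{LmSTtopoAd} there are $(u_n,e_n,u^*_n)\to(\ou,\be,\hu^*)$ in $\gph\mN$ and $v_n\in C(u_n,e_n,u^*_n)$ with $v_n\to v$ strongly. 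Applying the uniform estimate along the sequence and passing to the limit, using the continuity of $\mJ''_{uu}$ from Lemma~\ref{LmJCntnous}, yields $\mJ''_{uu}(\ou,\be)v^2\geq\delta\|v\|^2_{L^2(\Omega)}>0$, which is \eqref{CdChrFLpHdr}.

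For the converse (iii)$\Rightarrow$(i) I would argue by contradiction. If the uniform condition of Theorem~\ref{ThmCFHldr}(iii) failed, there would be $(u_n,e_n,u^*_n)\to(\ou,\be,\hu^*)$ in $\gph\mN$ and $v_n\in C(u_n,e_n,u^*_n)$ with $\mJ''_{uu}(u_n,e_n)v_n^2<\tfrac1n\|v_n\|^2_{L^2(\Omega)}$; normalizing $\|v_n\|_{L^2(\Omega)}=1$ and extracting a weakly convergent subsequence $v_n\rightharpoonup v$, Lemma~\ref{LemOlmWS4} gives $v\in\mC_{w^*}(\ou,\be,\hu^*)$, hence $v(x)\hu^*(x)=0$ a.e. The sequential weak lower semicontinuity of the elliptic part together with the compactness of $G'$ and $G''$, exactly as in the $\liminf$ computation following \eqref{DermJVn}, gives $\liminf_n\mJ''_{uu}(u_n,e_n)v_n^2\geq\mJ''_{uu}(\ou,\be)v^2$, so $\mJ''_{uu}(\ou,\be)v^2\leq0$ and (iii) forces $v=0$. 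Then $\mJ''_{uu}(u_n,e_n)v_n^2\to0=\mJ''_{uu}(\ou,\be)v^2$, and writing $Q(v_n)-Q(v)=\mJ''_{uu}(u_n,e_n)v_n^2-\mJ''_{uu}(\ou,\be)v^2-\big(\mJ''_{uu}(u_n,e_n)-\mJ''_{uu}(\ou,\be)\big)v_n^2$ and invoking Lemma~\ref{LmJCntnous} gives $Q(v_n)\to Q(v)$, where $Q$ is the Legendre form of Lemma~\ref{LmQLgndrFm}. The Legendre property then forces $v_n\to v=0$ strongly, contradicting $\|v_n\|_{L^2(\Omega)}=1$. Hence the uniform condition holds and Theorem~\ref{ThmCFHldr} yields (i).

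The main obstacle is this converse direction, namely the passage from pointwise positive definiteness at $(\ou,\be)$ to the uniform estimate in a neighborhood. The delicacy is that $v_n$ only converges weakly, so one cannot evaluate the quadratic form directly; the argument hinges on three facts aligning: the weak* outer-limit formula of Lemma~\ref{LemOlmWS4} to control the limiting critical cone, the weak lower semicontinuity built into the Legendre decomposition of $\mJ''_{uu}(\ou,\be)$ from Lemma~\ref{LmQLgndrFm}, and the continuity of $\mJ''_{uu}$ in $(u,e)$ from Lemma~\ref{LmJCntnous}, which is what converts the weak limit into a strong one. I would stress that $\mJ''_{uu}(\ou,\be)$ depends only on $(\be_y,\be_J)$, so the Legendre-form lemma applies verbatim even though the constraint perturbations $(\be_\alpha,\be_\beta)$ are now active; this is precisely why the strong-convergence endgame from Section~3 carries over unchanged to the perturbed admissible set.
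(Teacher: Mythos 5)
Your proposal is correct and follows essentially the same route as the paper: the inclusions via \eqref{DeriMathJ}, the equivalence (i)$\Leftrightarrow$(ii) from Theorem~\ref{ThmCFHldr}, the strong-limit argument through Lemma~\ref{LemOlmWS4} for one direction, and the contradiction argument (weak convergence, $\liminf$ estimate, then the Legendre form of Lemma~\ref{LmQLgndrFm} with Lemma~\ref{LmJCntnous} to force strong convergence) for the other. In fact you spell out in full the step the paper compresses into ``arguing similarly as in the proof of Theorem~\ref{ThmChLpBrUE},'' including the correct observation that $\mJ''_{uu}(\ou,\be)$ depends only on $(\be_y,\be_J)$, so that lemma applies unchanged.
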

\begin{proof}
As it has been seen in the proof of Theorem~\ref{ThmChLpBrUE}, we
have
$$\mJ'_u(\ou,\be)
  =\zeta(\ou+\be_y)+\varphi_{\ou+\be_y}+G'(\ou+\be_y)^*\be_J\in L^2(\Omega).$$
Therefore, the inclusions $\ou^*\in\mJ'_u(\ou,\be)+\mN(\ou,\be)$ and
$\hu^*=\ou^*-\mJ'_u(\ou,\be)\in\mN(\ou,\be)$ follow. By
Theorem~\ref{ThmCFHldr}, (i) is equivalent to (ii). In order to
verify the equivalence of (ii) and (iii), it suffices to prove that
(iii) is equivalent to Theorem~\ref{ThmCFHldr}(iii).

Assume that Theorem~\ref{ThmCFHldr}(iii) holds. Let any $v\in
L^2(\Omega)$ satisfy $v\neq0$ and $v(x)\hu^*(x)=0$ for a.e.
$x\in\Omega$. By \eqref{CmpWSSTcAd}, we have
$v\in\mC_s(\ou,\be,\hu^*)$. From this and \eqref{LmSTtopoAd} there
exist sequences $(u_n,e_n,u^*_n)\to(\ou,\be,\hu^*)$ with
$(u_n,e_n,u^*_n)\in\gph\mN$, $v_n\in C(u_n,e_n,u^*_n)$ such that
$v_n\to v$ when $n\to\infty$. By $v_n\in C(u_n,e_n,u^*_n)$ and by
\eqref{CndChrHldr} we get
\begin{equation}\label{SqCdChrHdr}
    \mJ''_{uu}(u_n,e_n)v^2_n\geq\delta\|v_n\|^2_{L^2(\Omega)},
\end{equation}
Passing \eqref{SqCdChrHdr} to the limit as $n\to\infty$, we obtain
$\mJ''_{uu}(\ou,\be)v^2>0$.

Conversely, assume that (iii) holds. Suppose to the contrary that
Theorem~\ref{ThmCFHldr}(iii) does not hold. Then, there exist
sequences $(u_n,e_n,u^*_n)\to(\ou,\be,\hu^*)$ with
$(u_n,e_n,u^*_n)\in\gph\mN$ and $v_n\in C(u_n,e_n,u^*_n)$ such that
\begin{equation}\label{CtrCndHdr}
    \mJ''_{uu}(u_n,e_n)v^2_n<\frac{1}{n}\|v_n\|^2_{L^2(\Omega)},\ \forall n\in\N.
\end{equation}
We may assume that $\|v_n\|_{L^2(\Omega)}=1$ for every $n\in\N$ and
may also assume that $v_n\rightharpoonup v$ in $L^2(\Omega)$. This
implies that $v\in\mC_{w^*}(\ou,\be,\hu^*)$ due to
\eqref{LmWStopoAd}. Using \eqref{CmpWSSTcAd} and arguing similarly
as in the proof of Theorem~\ref{ThmChLpBrUE}, we obtain a
contradiction. $\hfill\Box$
\end{proof}

\begin{Remark}\rm
We observe that under linear perturbations of the admissible control
set of the control problem, we obtain condition \eqref{CdChrFLpHdr}
with the same structure as condition \eqref{ExCdChrFLp}. In other
words, when the admissible control sets undergo linear
perturbations, condition \eqref{ExCdChrFLp}, which is a
characterization of both Lipschitzian and H\"{o}lderian full
stability, is still ``stable" provided that the assumption
\eqref{CndAdCtrSt} holds. Our result considerably extends the
results of \cite{MalTro99ZAA} and \cite{MalTro00CC} because we
consider basic perturbations of all the cost functional, the state
equation, and the admissible control set of the control problem
while the admissible control sets considered in \cite{MalTro99ZAA}
and \cite{MalTro00CC} are fixed.
\end{Remark}

\section{Concluding remarks}

Explicit characterizations of both Lipschitzian and H\"{o}lderian
full stability for a class of optimal control problems are
established in this paper. When the admissible control set of the
problems is fixed, we show that the two full stability properties
are always equivalent. This fact relies on a general stability
result for optimization problems. In the perturbed admissible
controls setting, the two full stability properties are also
equivalent for this class of control problems. From our results we
see that the equivalence of the two full stability properties in
this case is due to the special structures of the perturbed
admissible control sets, these full stability properties are not
always equivalent in general. We think that the main reason for the
equivalence of the full Lipschitzian stability and the full
H\"{o}lderian one in our paper is that all the cost functional, the
state equation, and especially the admissible control set of the
control problems undergo \emph{linear perturbations}. Motivated by
this remark, in order to understand deeply about phenomena related
to the full stability in optimal control, in future research we
intend to study the full stability for classes of control problems
under \emph{nonlinear perturbations}.

\end{document}